\newtheorem{theorem}{Theorem}[section]
\newtheorem{lemma}[theorem]{Lemma}
\newtheorem{proposition}[theorem]{Proposition}
\newtheorem{corollary}[theorem]{Corollary}
\theoremstyle{definition}
\newtheorem{definition}[theorem]{Definition}
\newtheorem{remark}[theorem]{Remark}
\DeclareMathOperator{\id}{id}
\DeclareMathOperator{\FPdim}{FPdim}
\DeclareMathOperator{\Fun}{Fun}
\renewcommand{\Vec}{\operatorname{Vec}}
\DeclareMathOperator{\Hom}{Hom}
\DeclareMathOperator{\Aut}{Aut}
\DeclareMathOperator{\Rep}{Rep}
\newcommand{\rev}{\text{rev}}
\renewcommand{\mod}{\operatorname{mod}}
\DeclareMathOperator{\ev}{ev}
\DeclareMathOperator{\coev}{coev}
\DeclareMathOperator{\Real}{Re}
\newcommand{\C}{\mathcal{C}}
\newcommand{\T}{\mathcal{T}}
\newcommand{\D}{\mathcal{D}}
\newcommand{\E}{\mathcal{E}}
\newcommand{\Z}{\mathcal{Z}}
\renewcommand{\L}{\mathcal{L}}
\newcommand{\M}{\mathcal{M}}
\renewcommand{\O}{\mathcal{O}}
\newcommand{\be}{\mathbf{1}}
\renewcommand{\be}{\mathbf{1}}
\newcommand{\bt}{\boxtimes}
\newcommand{\ot}{\otimes}
\newcommand{\TY}{\mathcal{T}\mathcal{Y}}
\newcommand{\bZ}{\mathbb{Z}}
\newcommand{\legendre}[2]{#1 \overwithdelims () #2}
\begin{document}
\title{Centers of graded fusion categories}

\author{Shlomo Gelaki}
\address{S.G.: Department of Mathematics, Technion-Israel Institute of
Technology, Haifa 32000, Israel.}
\email{gelaki@math.technion.ac.il}

\author{Deepak Naidu}
\address{D.N.: Department of Mathematics, Texas A\&M University,
College Station, TX 77843, USA.}
\email{dnaidu@math.tamu.edu}

\author{Dmitri Nikshych}
\address{D.N.: Department of Mathematics and Statistics,
University of New Hampshire,  Durham, NH 03824, USA.}
\email{nikshych@math.unh.edu}

\begin{abstract}
Let $\C$ be a fusion category faithfully graded by a finite group~$G$
and let $\D$ be the trivial component of this grading.
The center $\Z(\C)$  of $\C$ is shown to be canonically equivalent to a $G$-equivariantization
of the relative center $\Z_\D(\C)$.
We use this result to
obtain a criterion for $\C$ to be  group-theoretical and apply it
to Tambara-Yamagami fusion categories.   We also find
several new series of
modular categories by analyzing the centers of Tambara-Yamagami
categories.   Finally, we prove a general result about existence of zeroes in $S$-matrices
of weakly integral modular categories.
\end{abstract}

\maketitle

\setcounter{tocdepth}{1}
\tableofcontents
\newpage

\section{Introduction}

Throughout the paper we work over an algebraically closed field $k$
of characteristic $0$. All categories considered in this paper are
finite, Abelian, semisimple, and $k$-linear.
We freely use the language and basic theory of fusion categories, module
categories over them, braided categories, and Frobenius-Perron dimensions \cite{BK, O, ENO1}.

Let $G$ be a finite group. A fusion category $\C$ is {\em $G$-graded}
if there is a decomposition
\[
\C =\bigoplus_{g\in G}\, \C_g
\]
of $\C$ into a direct sum of full Abelian subcategories such that
the tensor product of $\C$
maps $\C_g\times \C_h$ to $\C_{gh}$, for all $g,h\in G$.
A {\em $G$-extension} of a fusion category $\D$ is a $G$-graded
fusion category $\C$  whose trivial component $\C_e$,
where $e$ is the identity of $G$, is equivalent to $\D$.

Gradings and extensions play an important role in the study
and classification of fusion categories.  E.g., {\em nilpotent}
fusion categories  (i.e., those categories that can be obtained from
the trivial category by a sequence of groups extensions) were studied
in \cite{GN}.  It was proved in \cite{ENO1} that every fusion category
of prime power dimension is nilpotent.  Group-theoretical properties
of such categories were studied in \cite{DGNO}.  Recently, fusion
categories of dimension $p^nq^m$, where $p,\,q$ are primes,
were shown to be Morita equivalent to nilpotent categories \cite{ENO3}.

The main goal of this paper is to describe the center $\Z(\C)$
of a $G$-graded  fusion category $\C$ in terms
of its trivial component $\D$ (Theorem~\ref{ZCG =ZDC})
and apply this description to the study of structural properties of $\C$
and construction of new examples of modular categories.

The organization of the paper is as follows. In Section~\ref{sect 2}
we recall some basic notions, results, and examples of fusion
categories, notably the notions of the relative center of a bimodule
category \cite{Ma}, group action on a fusion category and crossed
product \cite{Ta2}, equivariantization and de-equivariantization
theory \cite{AG, Br, G, Ki, Mu1, DGNO}, and braided $G$-crossed
fusion categories \cite{Tu1, Tu2}.

In Section~\ref{main section} we study the center $\Z(\C)$ of a $G$-graded fusion
category $\C$. We show  that if $\D$ is the trivial component of $\C$,
then the relative center $\Z_\D(\C)$ has a canonical structure of
a braided $G$-crossed category and  there is an equivalence of
braided fusion categories $\Z_\D(\C)^G \cong \Z(\C)$
(Theorem~\ref{ZCG =ZDC}). Thus, the structure of $\Z(\C)$ can be understood
in terms of a smaller and more transparent category $\Z_\D(\C)$.
In particular, there is a canonical braided action of $G$
on  $\Z(\D)$\footnote{This action is studied in detail in \cite{ENO3}.}.
In Corollary~\ref{graded gt criterion} we use this action to prove
that $\C$ is group-theoretical if and only if $\Z(\D)$
contains a $G$-stable Lagrangian subcategory.
As an illustration, we describe the center of a crossed product
fusion category $\C= \D \rtimes G$.

We apply the above results in Section~\ref{TY section}
to the study  of Tambara-Yamagami categories \cite{TY}.
We obtain a convenient description of the
centers of such categories as equivariantizations
and compute their modular data, i.e., $S$- and $T$-matrices.
This computation was previously done by Izumi in \cite{I}
using different techniques.  We  establish a criterion for a Tambara-Yamagami
category  to be group-theoretical (Theorem~\ref{thm:when TY is gt}).
We also extend the construction of non group-theoretical semisimple
Hopf algebras from Tambara-Yamagami categories given in \cite{Ni}.

In Section~\ref{modquadr}  we construct a series of new modular categories as factors of
the centers of Tambara-Yamagami categories.  Namely, one associates a pair
of such categories $\E(q,\,\pm)$ with any non-degenerate quadratic form $q$
on an Abelian group $A$ of odd order.  The categories $\E(q,\,\pm)$ have dimension
$4|A|$. They are group-theoretical if and only if $A$ contains a Lagrangian
subgroup with respect to $q$.  We compute the $S$-
and $T$- matrices  of $\E(q,\,\pm)$ and  write down several small examples
explicitly.

Section~\ref{sect 6} is independent from the rest of the paper and contains
a general result about existence of zeroes in $S$-matrices of
weakly integral modular categories (Theorem~\ref{Burnside again}).
This is a categorical analogue of a classical result of Burnside in character theory.

\par
{\bf Acknowledgments.}  We are grateful to P.~Etingof, M.~M\"uger, and V.~Ostrik for useful
discussions. Part of this work was done while the first
author was on Sabbatical in the Departments of Mathematics at the
University of New Hampshire and MIT; he is grateful for their warm
hospitality. The research of S.~Gelaki was partially
supported by the Israel Science Foundation (grant No. 125/05).
The research of D.~Nikshych was supported by
the NSA grant H98230-07-1-0081 and the NSF grant DMS-0800545.

\section{Preliminaries}
\label{sect 2}

Below we recall  several constructions and results used in the sequel.
\subsection{Dual fusion categories  and Morita equivalence}
Let $\C$ be a fusion category
and let $\M$ be an  indecomposable right $\C$-module
category $\M$. The category  $\C^*_\M$ of $\C$-module
endofunctors of $\M$ is a fusion category, called the dual
of $\C$ with respect to $\M$ (see \cite{ENO1, O}).

Following \cite{Mu3}, we say that two fusion categories $\C$ and
$\D$ are {\em  Morita equivalent} if $\D$ is equivalent to $\C^*_\M$, for some indecomposable
right $\C$-module category $\M$.
A fusion category is said to be {\em pointed} if all its simple objects are
invertible (any such category is equivalent to the category $\Vec_G^\omega$
of vector spaces graded by a finite group $G$ with the associativity constraint
given by a $3$-cocylce $\omega \in Z^3(G,\, k^\times)$).
A fusion category is called {\em group-theoretical} if it is
Morita equivalent to a pointed fusion category.
See \cite{O, ENO1, Ni} for details of the theory of group-theoretical categories.

\subsection{The center of a bimodule category and the relative center of a fusion category}
\label{relative center}

Let $\C$ be a fusion category with unit object $\be$ and
associativity constraint  $\alpha_{X,Y,Z}:(X\ot Y)\ot Z \xrightarrow{\sim} X\ot (Y \ot Z)$
and let $\M$ be a $\C$-bimodule
category.

\begin{definition}
\label{center of a module}
The {\em center} of $\M$ is the category $\Z_\C(\M)$ of $\C$-bimodule functors
from $\C$ to $\M$.
\end{definition}

Explicitly, the objects of $\Z_\C(\M)$ are pairs $(M,\, \gamma)$, where $M$ is an object of $\M$ and
\begin{equation}
\label{gamma}
\gamma = \{ \gamma_{X} : X\ot M \xrightarrow{\sim}  M \ot X \}_{ X \in \C}
\end{equation}
is a natural family of isomorphisms making the following diagram
commutative:
\begin{equation}
\label{central object}
\xymatrix{
& X \ot (M \ot Y) \ar[rr]^{\alpha_{X,M,Y}^{-1}} & &
(X \ot M) \ot Y  \ar[dr]^{\gamma_{X}} & \\
X\ot (Y\ot M) \ar[ur]^{\gamma_{Y}}
\ar[dr]_{\alpha_{X,Y,M}^{-1}} & &  &  &(M \ot X) \ot Y \\
& (X\ot Y)\ot M \ar[rr]_{\gamma_{X\ot Y}} & & M\ot (X\ot Y)
\ar[ur]_{\alpha_{M,X,Y,}^{-1}} & }
\end{equation}
where $\alpha$'s denote the associativity constraints in $\M$.

Indeed, a $\C$-bimodule functor $F: \C \to \M$ is completely determined
by the pair $(F(\be),\, \{\gamma_X\}_{X\in \C})$, where $\gamma = \{\gamma_X\}_{X\in \C}$
is the collection of isomorphisms
\[
\gamma_X : X \ot F(\be) \xrightarrow{\sim}  F(X) \xrightarrow{\sim}   F(\be) \ot X
\]
coming from the $\C$-bimodule structure on $F$.

We will  call the natural family of isomorphisms \eqref{gamma} the {\em central structure} of
an object $ X \in \Z_\C(\M)$.

\begin{remark}
\begin{enumerate}
\item[(i)] The definition of the center of a bimodule category is parallel to that
of the center of a bimodule over a ring.
\item[(ii)] We will often suppress the central structure  while working
with objects of $\Z_\C(\M)$ and refer to  $(M,\,\gamma)$ simply as $M$.
\item[(iii)] $\Z_\C(\M)$ is a semisimple Abelian category. It has an obvious canonical structure
of a $\Z(\C)$-module category, where $\Z(\C)$ is the center of $\C$
(see e.g., \cite[Section XIII.4]{K}  for the definition of
$\Z(\C)$).

\end{enumerate}
\end{remark}

Here is an important special case of the above construction. 
Let $\C$ be a fusion category and let $\D \subset \C$ be a fusion
subcategory. Then $\C$ is a $\D$-bimodule category. We will call
$\Z_\D(\C)$ the {\em relative center} of  $\C$.

\begin{remark}
The aforementioned construction of relative center is a special case
of a more general construction considered by Majid in \cite{Ma}
(see Definition 3.2 and Theorem 3.3 of \cite{Ma}).
\end{remark}

It is easy to see
that $\Z_\D(\C)$ is a tensor category with tensor product defined as
follows. If $(X,\, \gamma)$ and $(X',\, \gamma')$ are objects in
$\Z_\D(\C)$ then
\[
(X,\, \gamma) \otimes (X',\, \gamma') := (X\ot X',\,
\tilde\gamma),
\]
where $\tilde\gamma_V: V \ot
(X\ot X') \xrightarrow{\sim} (X\ot X')\ot V ,\, V\in \D$,
is defined by the following diagram:
\begin{equation}
\label{centrproduct}
\xymatrix{
V\ot (X\ot X') \ar[d]_{\tilde\gamma_V} \ar[rr]^{\alpha_{V,X, X'}^{-1}} &&
(V\ot X) \ot X'  \ar[rr]^{\gamma_V} &&
(X\ot V) \ot X' \ar[d]^{\alpha_{X, V, X'}} \\
(X\ot X') \ot V  &&
X\ot (X'\ot V)  \ar[ll]_{\alpha_{X,X',V}^{-1}} &&
X\ot (V \ot X') \ar[ll]_{\gamma'_V}.
}
\end{equation}
The unit object of $\Z_\D(\C)$ is $(\mathbf{1},\,\id)$. The dual of $(X,\,\gamma)$
is $(X^*,\,\overline{\gamma})$, where $\overline{\gamma}_V:= (\gamma_{{}^*V})^*$.

\begin{remark} Let $\C$ and $\D$ be as above.
\begin{enumerate}
\item[(i)] $\Z_\D(\C)$ is dual to the fusion category $\D\bt \C^\text{rev}$
(where $\C^\text{rev}$ is the fusion category obtained from $\C$ by
reversing the tensor product and $\bt$ is Deligne's tensor product
of fusion categories)
with respect to its module category $\C$, where $\D$ and $\C^\text{rev}$ act
on $\C$ via the right  and left multiplication respectively. In
particular, $\Z_\D(\C)$ is a fusion category.
\item[(ii)]  $\FPdim(\Z_\D(\C)) = \FPdim(\C)\FPdim(\D)$, where $\FPdim$
denotes the Frobenius-Perron dimension of a category.
\item[(iii)] $\Z_\C(\C)$ coincides with the center $\Z(\C)$ of  $\C$. This
category has a canonical braiding given by
\begin{equation}
\label{braiding}
c_{(X, \gamma),\,(X',\gamma')} = \gamma_{X'} :
 (X,\, \gamma) \otimes (X',\, \gamma') \xrightarrow{\sim}
(X',\, \gamma') \otimes (X,\, \gamma).
\end{equation}
\item[(iv)] There is an obvious forgetful tensor functor:
\begin{equation}
\label{ZC to ZDC} \Z(\C) \mapsto \Z_\D(\C): (X,\,\gamma) \mapsto
(X,\,\gamma|_\D).
\end{equation}
\end{enumerate}
\end{remark}


\subsection{Centralizers in braided fusion categories}
\label{centralizers etc}

Let $\C$ be a braided fusion cate\-gory with braiding $c$.
Two objects $X$ and $Y$ of $\C$ are said to {\em centralize}
each other \cite{Mu2} if $c_{Y,X}c_{X,Y}=\id_{X\ot Y}$.

For any fusion subcategory $\D \subseteq \C$ its {\em centralizer}
$\D'$ is the full fusion subcategory of $\C$ consisting of all objects $X \in \C$
which centralizes every object in $\D$. The category $\C$ is said to be
{\em non-degenerate} if $\C'=\Vec$.   In this case one has $\D''=\D$ \cite{Mu2}.
If $\C$ is a pre-modular category,
i.e., has a spherical structure, then it is non-degenerate if and only if it is
modular.

A braided fusion category $\E$ is called {\em Tannakian} if it is
equi\-valent to the representation category $\Rep(G)$ of a finite
group $G$ as a braided fusion category. Here $\Rep(G)$ is considered
with its standard symmetric braiding. The group $G$ is defined by
$\E$ up to an isomorphism \cite{D}.

A fusion subcategory $\L$ of a braided fusion category is called {\em Lagrangian} if
it is Tannakian and $\L =\L'$.

\begin{theorem}[\cite{DGNO}]
\label{gt DGNO}
 A fusion category $\C$ is group-theoretical if and only if
$\Z(\C)$ contains a Lagrangian subcategory.
\end{theorem}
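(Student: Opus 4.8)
The plan is to deduce the theorem from the fact that the Drinfeld center is a complete Morita invariant: two fusion categories are Morita equivalent if and only if their centers are equivalent as braided fusion categories (see \cite{ENO1, Mu3}). For the ``only if'' direction, assume $\C$ is group-theoretical, so $\C$ is Morita equivalent to some $\Vec_G^\omega$ and hence $\Z(\C)\cong\Z(\Vec_G^\omega)$ as braided fusion categories; it then suffices to exhibit a Lagrangian subcategory of $\Z(\Vec_G^\omega)$. I would use the standard model of $\Z(\Vec_G^\omega)$ by $G$-graded, projectively $G$-equivariant vector spaces, in which the full subcategory of objects concentrated in the neutral grading component is a Tannakian subcategory equivalent to $\Rep(G)$ with its standard symmetric braiding. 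This subcategory is self-centralizing: the inclusion $\Rep(G)\subseteq\Rep(G)'$ holds because $\Rep(G)$ is symmetric, and equality follows from the dimension count $\FPdim(\Rep(G))\,\FPdim(\Rep(G)')=\FPdim(\Z(\Vec_G^\omega))=|G|^2$ valid in any non-degenerate braided category \cite{Mu2}. Thus $\Z(\C)$ contains a Lagrangian subcategory.

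For the ``if'' direction, suppose $\L\subseteq\Z(\C)$ is Lagrangian; being Tannakian, $\L\cong\Rep(G)$ for a finite group $G$. I would de-equivariantize the non-degenerate braided category $\Z(\C)$ by $\L$, obtaining a braided $G$-crossed fusion category $\Z(\C)_G$, faithfully graded by $G$, whose neutral component is the de-equivariantization of the centralizer $\L'$; since $\L$ is Lagrangian one has $\L'=\L$, so the neutral component is $\Vec$. A faithfully $G$-graded fusion category with neutral component $\Vec$ is pointed, and carrying in addition a $G$-crossed braiding it is equivalent, as a braided $G$-crossed category, to $\Vec_G^\omega$ (with $G$ acting on itself by conjugation and a suitable crossed braiding) for some $\omega\in Z^3(G,k^\times)$. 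Equivariantizing back, and using that equivariantization and de-equivariantization are mutually inverse together with the identification of $(\Vec_G^\omega)^G$ with the twisted double $\Z(\Vec_G^\omega)$ \cite{DGNO}, one obtains a braided equivalence $\Z(\C)\cong\Z(\Vec_G^\omega)$. The Morita invariance of the center then forces $\C$ to be Morita equivalent to $\Vec_G^\omega$, i.e.\ group-theoretical.

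The main obstacle is the reconstruction in the second step --- passing from a non-degenerate braided category containing a Lagrangian copy of $\Rep(G)$ to the twisted double $\Z(\Vec_G^\omega)$. This requires the full machinery of (de-)equivariantization for \emph{braided} and \emph{$G$-crossed braided} fusion categories: that the braiding of $\Z(\C)$ descends to a genuine $G$-crossed braiding on $\Z(\C)_G$, the description of $G$-crossed braided extensions of $\Vec$ in terms of $3$-cocycles, and the computation that equivariantizing such an extension returns the center of the corresponding pointed category. A secondary but indispensable ingredient is the nontrivial direction of the Morita/center correspondence, used to convert the equivalence $\Z(\C)\cong\Z(\Vec_G^\omega)$ back into a statement about $\C$ itself.
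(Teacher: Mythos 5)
First, a point of reference: the paper does not prove this statement at all --- it is quoted from \cite{DGNO} --- so your proposal should be measured against the argument given there (and in \cite{ENO2}), not against anything in the present text. Your plan is correct in outline, the ``only if'' half is the standard argument (Morita invariance of the center in the easy direction, plus the observation that the degree-$e$ part of $\Z(\Vec_G^\omega)$ is a symmetric copy of $\Rep(G)$ which is forced to be Lagrangian by the dimension count $\FPdim(\E)\FPdim(\E')=\FPdim(\Z(\Vec_G^\omega))$), but in the ``if'' direction you take a genuinely different and strictly heavier route than \cite{DGNO}. The proof there runs through the correspondence between Lagrangian algebras in $\Z(\C)$ and indecomposable $\C$-module categories: the function algebra $A=\Fun(G)$ of the Lagrangian subcategory $\L\cong\Rep(G)$ is a Lagrangian algebra, the category of $A$-modules in $\Z(\C)$ (your $\Z(\C)_G$) is canonically equivalent to the dual category $\C^*_\M$ for the $\C$-module category $\M$ built from the forgetful image of $A$ in $\C$, and the same dimension count you perform ($\FPdim(\Z(\C)_G)=|G|$ with faithful grading and trivial neutral component) shows this dual is pointed. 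This exhibits the Morita equivalence of $\C$ with a pointed category directly, with no need to identify $\Z(\C)$ with $\Z(\Vec_G^\omega)$ and no appeal to the converse of Morita invariance of centers.

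By contrast, your route needs the two big inputs you yourself flag. Input (a), that every faithfully graded braided $G$-crossed extension of $\Vec$ is the canonical one attached to some $\omega\in H^3(G,k^\times)$ and equivariantizes to the twisted double $\Z(\Vec_G^\omega)$, is true, but the phrase ``a suitable crossed braiding'' conceals the need to rule out exotic $G$-crossed braidings on $\Vec_G^\omega$; this classification is essentially where the entire content of the theorem sits (it is the cohomological computation carried out in the preliminary version of \cite{DGNO} and in \cite{NN}), so as written your second step is a deferral rather than a proof. Input (b), the nontrivial direction of ``Morita equivalent $\Leftrightarrow$ braided equivalent centers,'' is a theorem of \cite{ENO2} whose own proof uses the same Lagrangian-algebra/module-category correspondence, so your argument is not circular but it routes through a result strictly stronger than what is needed. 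What your approach buys is the sharper conclusion that $\Z(\C)$ is literally a twisted group double (which is indeed how the preliminary version of \cite{DGNO} phrases the result); what the direct approach buys is economy, and it is the one you should internalize, since the paper's Corollary~\ref{graded gt criterion} and Section~\ref{when TY is gt} use exactly the Tannakian/de-equivariantization mechanics that the direct proof rests on.
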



\subsection{Group actions on fusion categories and equivariantization}
\label{group actions}

Let $G$ be a finite group, and let ${\underline G}$ denote the
monoidal category whose objects are elements of $G$, morphisms are
identities, and the tensor product is given by the multiplication in
$G$. Recall that an action of $G$ on  a fusion category $\C$ is a
monoidal functor ${\underline G} \to \text{Aut}_\ot(\C): g \mapsto
T_g$. For any $g,h\in G$, let $\gamma_{g,h}$ be the isomorphism
$T_g\circ T_h\simeq T_{gh}$ that defines the monoidal structure on
the functor ${\underline G} \to \text{Aut}_\ot(\C)$.

\begin{definition}
\label{Gequiv object} A {\em $G$-equivariant object\,} in $\C$ is a
pair $(X,\{u_g\}_{g\in G})$ consisting of an object $X$ of $\C$
together with a collection of isomorphisms
 $u_g: T_g(X)\simeq X,\, g\in G$, such that the diagram
\begin{equation*}
\label{equivariantX}
\xymatrix{T_g(T_h(X))\ar[rr]^{T_g(u_h)} \ar[d]_{\gamma_{g,h}(X)
}&&T_g(X)\ar[d]^{u_g}\\ T_{gh}(X)\ar[rr]^{u_{gh}}&&X}
\end{equation*}
commutes for all $g,h\in G$.
One defines morphisms of equivariant objects to be morphisms in $\C$ commuting with $u_g,\; g\in G$.
\end{definition}

Equivariant objects in $\C$ form a fusion category, called the  {\em equivariantization}
of $\C$ and denoted by $\C^G$,  see \cite{Ta2, AG, G}.
One has  $\FPdim(\C^G) = |G| \FPdim(\C)$.

There is another fusion category that comes from an action of $G$ on $\C$.
It is the {\em crossed product} category $\C \rtimes G$ defined as follows, see \cite{Ta2, Ni}.
As an Abelian category, $\C \rtimes  G: = \C \bt \Vec_G$, where $\Vec_G$ denotes
the fusion category of $G$-graded vector spaces.
The tensor product in $\C \rtimes G$  is given by
\begin{equation}
\label{crossed prod}
(X \bt g) \ot (Y \bt h): = (X \ot T_g(Y)) \bt gh,\qquad X,Y \in \C,\quad g,h\in G.
\end{equation}
The unit object is $\be \bt e$ and the associativity and unit constraints come from those of $\C$.
Clearly, $\C \rtimes G$ is faithfully $G$-graded with the trivial component $\C$.

It was explained in \cite{Ni} that $\C$ is a right $\C\rtimes
G$-module category via
\[
Y \ot (X\bt g):= T_{g^{-1}}(Y \ot X)
\]
and the corresponding dual category $(\C\rtimes G)_\C^*$ is equivalent to $\C^G$.
It follows from \cite{Mu3} that there is an equivalence of braided fusion categories
\[
\Z(\C\rtimes G) \cong \Z(\C^G).
\]

Let $G$ be a finite group. For any conjugacy class $K$ of $G$ fix
a representative $a_K\in K$. Let $G_K$ denote the centralizer of $a_K$ in $G$.

\begin{proposition}
\label{simples in CG}
Let $\C=\bigoplus_{g\in G}\, \C_g$ be a $G$-graded fusion category with an
action $g \mapsto T_g$ of
$G$ on $\C$ such that $T_g$ carries $\C_h$ to $\C_{ghg^{-1}}$.
Let $H :=\{ g\in G \mid \C_g \neq 0 \}$.
There is a bijection between the set of isomorphism classes of simple
objects of $\C^G$  and pairs $(K,\, X)$, where $K \subset H$
is a conjugacy class of $G$ and $X$ is a simple $G_K$-equivariant
object of $\C_{a_K}$.
\end{proposition}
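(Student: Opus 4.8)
The plan is to decompose the problem by the $G$-grading on simple objects of $\C^G$. First I would recall the explicit description of simple objects of an equivariantization: by general nonsense (e.g.\ as in \cite{Ta2, AG, G}), a simple object of $\C^G$ is obtained from a single $G$-orbit of simple objects of $\C$ together with an irreducible projective representation (twisted by a suitable $2$-cocycle) of the inertia subgroup, and this correspondence is a bijection on isomorphism classes. The first step is therefore to translate this into the graded setting: since $T_g$ maps $\C_h$ to $\C_{ghg^{-1}}$, the group $G$ permutes the homogeneous components of $\C$, and the subset $H = \{g \mid \C_g \neq 0\}$ is easily seen to be a union of conjugacy classes of $G$ (indeed $\C_g \neq 0 \iff T_h(\C_g) = \C_{hgh^{-1}} \neq 0$). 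So the $G$-orbits of simple objects of $\C$ split according to which component $\C_g$ they live in, and the orbit of a simple object $Y \in \C_g$ is contained in $\bigsqcup_{h \in K} \C_h$ where $K$ is the conjugacy class of $g$.

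Next I would set up the bijection. Given a pair $(K, X)$ with $K \subset H$ a conjugacy class and $X$ a simple $G_K$-equivariant object of $\C_{a_K}$, I would produce a simple object of $\C^G$ by "inducing up" from the stabilizer: concretely, take $\bigoplus_{g \in G/G_K} T_g(X)$, suitably placed in components indexed by $K$, and equip it with the $G$-equivariant structure coming from the $G_K$-equivariant structure on $X$ together with the transitive permutation of cosets. The key point to check is that this object is simple in $\C^G$ and that every simple object of $\C^G$ arises this way exactly once. For the converse direction, given a simple $Z \in \C^G$, I would restrict the $G$-action data and use the grading: the underlying object $Z$ in $\C$ decomposes into homogeneous pieces permuted transitively by $G$ (transitivity because $Z$ is simple), so its support is a single conjugacy class $K$; then the component of $Z$ in $\C_{a_K}$ is acted on by the stabilizer $G_K$, and one extracts a simple $G_K$-equivariant object of $\C_{a_K}$. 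One must verify these two assignments are mutually inverse, which is essentially Mackey/Clifford theory for the pair $(G, G_K)$ acting on the category.

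The cleanest way to organize all of this, and the route I would actually take, is to realize $\C^G$ as an equivariantization in two stages or, better, to identify the relevant inertia data abstractly: the forgetful functor $\C^G \to \C$ together with the grading $\C = \bigoplus_{g} \C_g$ lets one decompose $\C^G$-simples by the conjugacy class $K$ of their support, and for fixed $K$ the full subcategory of $\C^G$ supported on $K$ should be equivalent (as an abelian category) to the equivariantization $(\C_{a_K})^{G_K}$ via the restriction-of-equivariant-structure functor $Z \mapsto (Z)_{a_K}$ with its residual $G_K$-action. The main obstacle I anticipate is bookkeeping the cocycles/associativity data carefully enough to see that the $G_K$-equivariant structure one obtains on the $\C_{a_K}$-component is genuinely a $G_K$-equivariant structure in the sense of Definition~\ref{Gequiv object} (no spurious cocycle appears because $a_K \in G_K$ is central in $G_K$ and $T_{a_K}$ is an inner-type automorphism of $\C_{a_K}$ compatible with the tensor structure), and conversely that the induction functor is well-defined and lands back in $\C^G$. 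Once that equivalence of abelian categories is established for each $K$, counting simple objects on both sides and summing over conjugacy classes $K \subset H$ gives the claimed bijection.
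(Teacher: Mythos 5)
Your proposal is correct and follows essentially the same route as the paper: a simple equivariant object is supported on a single conjugacy class $K$, its $\C_{a_K}$-component (with the restricted equivariant structure) is a simple $G_K$-equivariant object, and conversely one induces by forming $\bigoplus_h T_h(X)$ over coset representatives of $G_K$ in $G$, the two constructions being mutually inverse. The cocycle worry is unnecessary here: since $T_g(\C_{a_K})=\C_{a_K}$ for $g\in G_K$, the isomorphisms $u_g$, $g\in G_K$, together with the restricted $\gamma_{g,h}$ already give an honest $G_K$-equivariant structure in the sense of Definition~\ref{Gequiv object}, with no projective twist appearing.
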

\begin{proof}
A simple $G$-equivariant object of $\C$ must be supported on a single
conjugacy class $K$.  Let $Y=\oplus_{g\in K}\, Y_g$ be such an object.
Then $Y_{a_K}$ is a simple  $G_K$-equivariant object.

Conversely, given a $G_K$-equivariant object $X$ in $\C_{a_K}$ let
\[
Y =\bigoplus_{h}\, T_h(X),
\]
where the summation is taken over the set of representatives
of cosets of $G_K$ in $G$. It is easy to see that $Y$ acquires the structure
of a simple $G$-equivariant object.

Clearly, the above constructions are inverses of each other.
\end{proof}

\begin{remark}
The Frobenius-Perron dimension of the simple object corresponding to
a pair $(K,\, X)$
in Proposition~\ref{simples in CG} is $|K| \FPdim(X)$. \\
\end{remark}

\subsection{De-equivariantization of fusion categories}
\label{deeq}

Let $\C$ be a fusion category.  Let $\E =\Rep(G)$
be a Tannakian category along with a braided tensor functor $\E \to \Z(\C)$
such that the composition $\E \to \Z(\C)\to \C$ (where the second arrow is the forgetful functor)
is fully faithful.  The following construction was introduced
by Brugui\`{e}res \cite{Br} and M\"uger \cite{Mu1}.
Let $A:=\mbox{Fun}(G)$ be the algebra of functions on
$G$. It is a commutative algebra in $\E$, hence, its image is a commutative algebra in $\Z(\C)$.
This fact allows to view
the category $\C_G$ of $A$-modules in $\C$ as a
fusion category, called {\em de-equivariantization}  of $\C$.
There is a canonical surjective tensor functor
\begin{equation}
\label{de-eq functor}
F: \C \to \C_G : X \mapsto A\ot X.
\end{equation}
It was explained in \cite{Mu1, DGNO} that the group $G$ acts on $\C_G$
by tensor auto\-equivalences (this action comes from the action of $G$
on $A$ by right translations). Furthermore, there is  a bijection
between subcategories of $\C$ containing the image of $\E=\Rep(G)$ and $G$-stable
subcategories of $\C_G$. This bijection preserves Tannakian subcategories.

The procedures of equivariantization and  de-equivariantization
are inverses of each other, i.e., there are canonical equivalences
$(\C_G)^G \cong \C$ and $(\C^G)_G \cong \C$.

In particular, the above construction applies when $\C$ is a braided
fusion category containing a Tannakian subcategory $\E=\Rep(G)$.
In this case  the braiding of $\C$  gives rise to an additional structure on
the de-equivariantization functor \eqref{de-eq functor}.
Namely, there is natural family of isomorphisms
\begin{equation}
\label{Central Func}
X \ot F(Y)  \xrightarrow{\sim} F(Y)\ot X, \qquad  X\in \C_G,\, Y\in \C,
\end{equation}
satisfying obvious compatibility conditions.
In other words, $F$ can be factored through a braided functor $\C \to \Z(\C_G)$,
i.e., $F$ is a {\em central} functor.

If  $\E \subset \C'$ then $\C_G$ is a braided fusion category
with the braiding inherited from that of $\C$. If
$\E=\C'$, the category $\C_G$  is non-degenerate (in the presence
of a spherical structure this category is called
the {\em modularization} of $\C$ by $\E$ \cite{Br, Mu1}).

\begin{remark}
The category $\C_G$ is not braided in general. However it does have
an additional structure, namely it is a {\em braided $G$-crossed
fusion category}. See Section~\ref{G-equivariant vector bundles}
below for details.
\end{remark}
\subsection{Braided $G$-crossed categories}
\label{G-equivariant vector bundles}

Let $G$ be a finite group.
Kirillov Jr.\ \cite{Ki} and M\"uger \cite{Mu4} found a  description
of all braided fusion categories $\D$ containing $\Rep(G)$. Namely,
they showed that the datum of a braided fusion category $\D$
containing $\Rep(G)$ is equivalent to the datum of a braided
$G$-crossed category $\C$, see Theorem~\ref{KM thm}.
The notion  of a braided $G$-crossed category  is due to Turaev \cite{Tu1, Tu2}
and is recalled below.

\begin{definition}
\label{crossed} A {\em braided $G$-crossed fusion category} is a
fusion category $\C$ equip\-ped with  the following structures:
\begin{enumerate}
\item[(i)] a (not necessarily faithful)
grading $\C=\bigoplus_{g\in G}\C_g$,
\item[(ii)]  an action $g\mapsto T_g$ of $G$ on $\C$
such that $T_g(\C_h)\subset \C_{ghg^{-1}}$,
\item[(iii)] a natural
collection of isomorphisms, called the {\em $G$-braiding}:
\begin{equation}
\label{crosssed braiding} c_{X,Y}: X\ot Y\simeq T_g(Y)\ot X, \qquad
X\in \C_g,\, g\in G \mbox{ and } Y\in \C.
\end{equation}
\end{enumerate}
Let $\gamma_{g,h} : T_gT_h\xrightarrow{\sim} T_{gh}$ denote the tensor
structure of the functor $g\mapsto T_g$ and let $\mu_g$ denote the tensor
structure of $T_g$.

The above structures are required to satisfy
the following compatibility conditions:
\begin{enumerate}
\item[(a)] the diagram
\begin{equation}
\label{G-hex0}
\xymatrix @C=0.6in @R=0.45in{
T_g(X) \ot T_g(Y) \ar[rr]^{c_{T_g(X), T_g(Y)}} & &
T_{ghg^{-1}}(T_g(Y)) \ot T_g(X)  \ar[d]^{(\gamma_{ghg^{-1},g})_Y \ot \id_{T_g(X)}} & \\
T_g(X\ot Y) \ar[u]^{(\mu_g)_{X,Y}^{-1}}
\ar[d]_{T_g(c_{X,Y})} &  &  T_{gh}(Y) \ot T_g(X) \\
T_g(T_h(Y) \ot X) \ar[rr]_{(\mu_g)_{T_g(Y),X}^{-1}} & &
T_g(T_h(Y)) \ot T_g(X) \ar[u]_{(\gamma_{g,h})_Y \ot \id_{T_g(X)}},
}
\end{equation}
commutes for all $g,h \in G$ and objects $X\in \C_h,\, Y\in \C$,
\item[(b)] the diagram
\begin{equation}
\label{G-hex1}
\xymatrix{
& (X\ot Y)\ot Z \ar[dl]_{\alpha_{X,Y,Z}} \ar[dr]^{c_{X,Y}\ot \id_Z} & \\
X\ot (Y\ot Z) \ar[d]_{c_{X,Y\ot Z}} &  & (T_g(Y)\ot X) \ot Z \ar[d]^{\alpha_{T_g(Y),X,Z}} \\
T_g(Y \ot Z) \ot X \ar[d]_{(\mu_g)_{Y,Z}^{-1}\ot \id_X} & & T_g(Y) \ot (X \ot Z) \ar[d]^{\id_{T_g(Y)}\ot c_{X,Z}} \\
(T_g(Y) \ot T_g(Z)) \ot X  \ar[rr]^{\alpha_{T_g(Y),T_g(Z), X}} & & T_g(Y) \ot (T_g(Z) \ot X)
}
\end{equation}
commutes for all $g \in G$ and objects $X \in \C_g, Y,Z \in \C,$ and
\item[(c)] the diagram
\begin{equation}
\label{G-hex2}
\xymatrix{
& X\ot (Y\ot Z) \ar[dr]^{\id_X \ot c_{Y,Z}} & \\
(X\ot Y)\ot Z  \ar[ur]^{\alpha_{X,Y,Z}} &  & X\ot (T_h(Z)\ot Y) \ar[d]^{\alpha^{-1}_{X, T_h(Z),Y}} \\
T_{gh}(Z) \ot (X \ot Y)  \ar[u]^{c_{X\ot Y, Z}^{-1}}  & & (X\ot T_h(Z))\ot Y \ar[d]^{c_{X, T_h(Z)}\ot \id_Y} \\
T_gT_h(Z) \ot (X  \ot Y)  \ar[u]^{(\gamma_{g,h})_{Z}\ot \id_{X\ot Y}}  \ar[rr]^{\alpha^{-1}_{T_gT_h(Z),X, Y}} & & (T_gT_h(Z) \ot X)  \ot Y.
}
\end{equation}
commutes for all $g,h \in G$ and objects $X\in \C_g,\,Y\in \C_h, Z \in \C$.
\end{enumerate}
\end{definition}

\begin{remark}
\label{BGC rems}
The trivial component $\C_e$ of a braided $G$-crossed fusion category $\C$ is
a braided fusion category with the action of $G$ by braided autoequivalences.
This can be seen by taking $X,\,Y\in \C_e$ in diagrams \eqref{G-hex0} -- \eqref{G-hex2}.
\end{remark}

\begin{theorem} [\cite{Ki, Mu4}]
\label{KM thm} The equivariantization and de-equivariantization
constructions establish  a bijection between the set of equivalence
classes of $G$-crossed braided fusion categories and the set of
equivalence classes of braided fusion categories containing
$\Rep(G)$ as a symmetric fusion subcategory.
\end{theorem}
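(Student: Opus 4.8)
The plan is to upgrade the equivariantization/de-equivariantization dictionary of Sections~\ref{group actions}--\ref{deeq} to an equivalence between the two classes of data: one equips each of the two constructions with the extra structure it must carry, checks the relevant coherence axioms, and then deduces that the constructions are mutually inverse from the already-established plain equivalences $(\C_G)^G\cong\C$ and $(\C^G)_G\cong\C$. In the first direction, starting from a braided $G$-crossed fusion category $\C$, I would make $\C^G$ into a braided fusion category containing $\Rep(G)$. For $(X,\{u_g\})$ with $X\in\C_g$ homogeneous and an arbitrary object $(Y,\{v_k\})$ of $\C^G$, set $\tilde c_{X,Y}:=(v_g\ot\id_X)\circ c_{X,Y}\colon X\ot Y\to T_g(Y)\ot X\to Y\ot X$, and extend to non-homogeneous $X$ by additivity over its grading components. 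The work is to verify that $\tilde c_{X,Y}$ is a morphism of $G$-equivariant objects --- this is where axiom~(a), i.e.\ the compatibility~\eqref{G-hex0} of the $G$-braiding with the $G$-action, enters --- that it is natural in both variables, and that it satisfies the two hexagon identities, which follow from axioms~(b) and~(c), i.e.\ diagrams~\eqref{G-hex1} and~\eqref{G-hex2}, after substituting the equivariant structures. Then I would define $\iota\colon\Rep(G)\to\C^G$ sending $(W,\rho)$ to the multiple $W\ot\be$ of the unit object $\be\in\C_e$ equipped with the equivariant structure $\rho(g)\otimes(\text{the canonical }T_g(\be)\cong\be)$, and check that $\iota$ is fully faithful, tensor and braided, and that $\tilde c$ restricted to its essential image is the flip of vector spaces; hence $\iota(\Rep(G))$ is a symmetric (Tannakian) subcategory of $\C^G$ identified with $\Rep(G)$.

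Conversely, starting from a braided fusion category $\D$ with a symmetric fusion subcategory identified with $\Rep(G)$, I would make $\D_G$ into a braided $G$-crossed category. Recall from Section~\ref{deeq} that $\D_G$ is the fusion category of $A$-modules in $\D$, $A=\Fun(G)$, and that it carries a $G$-action $T$ by tensor autoequivalences coming from the translation action of $G$ on $A$. For the grading~(i) of Definition~\ref{crossed}, I would compare, for a simple $A$-module $(M,\mu)$, the action $\mu$ with the action obtained by conjugating $\mu$ with the double braiding $c_{M,A}c_{A,M}$ of $M$ with $A$: since $A$ is a commutative algebra this produces an algebra automorphism of $A$, and for simple $M$ it is translation by a well-defined element $g\in G$, which I declare to be $\deg(M)$. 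For the $G$-braiding~(iii), given $M\in(\D_G)_g$ and an arbitrary $A$-module $N$, the braiding of $\D$ fails to be $A$-linear on the $N$-side, but relabelling the $A$-action on $N$ through $T_g$ repairs this and yields $c_{M,N}\colon M\ot N\simeq T_g(N)\ot M$; tracking how $T_g$ conjugates the monodromy gives~(ii), $T_g((\D_G)_h)\subset(\D_G)_{ghg^{-1}}$. The content here is to show that $\deg$ is additive and multiplicative --- so that it really is a tensor grading --- and that~\eqref{G-hex0}--\eqref{G-hex2} hold; all of this reduces to naturality and the two hexagons of the braiding of $\D$, together with commutativity of $A$ and $G$-equivariance of the structure maps.

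To finish, on underlying fusion categories the two assignments are already inverse to each other by the equivalences $(\C_G)^G\cong\C$ and $(\C^G)_G\cong\C$ of Section~\ref{deeq}. It then remains to check that these equivalences carry the reconstructed structures to the original ones: that under $(\C^G)_G\cong\C$ the monodromy grading, the translation action, and the reconstructed $G$-braiding become the given grading, action and $G$-braiding of $\C$, and that under $(\D_G)^G\cong\D$ the reconstructed braiding and embedded copy of $\Rep(G)$ become the given ones on $\D$. Since both constructions are visibly functorial for equivalences of the respective kinds, this yields the asserted bijection of equivalence classes.

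I expect the main obstacle to be the de-equivariantization direction: proving that $\D_G$ genuinely carries a $G$-grading --- i.e.\ that the double braiding with $A=\Fun(G)$ on a simple $A$-module is always translation by a single group element and that the resulting degree is additive and multiplicative --- and then verifying the $G$-crossed hexagons~\eqref{G-hex0}--\eqref{G-hex2} for the twisted braiding. Matching this grading against the original one under $(\C^G)_G\cong\C$, and likewise the twisted braiding, is the corresponding technical point on the equivariantization side; the rest of the argument is structural bookkeeping once these diagram chases are in hand.
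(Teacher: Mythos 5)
Your proposal is correct in outline and, on the equivariantization side, coincides with the paper's argument: the braiding you define is exactly \eqref{braiding from equiv}, with axiom (a) giving equivariance of $\tilde c$ and (b), (c) the hexagons, and your embedding of $\Rep(G)$ is the paper's identification $\Rep(G)=\Vec^G\subset\C^G$. On the de-equivariantization side you take a genuinely different (and more direct) route. The paper defines the grading by turning the monodromy-versus-trivialization automorphism \eqref{FVX} of $F(V)\ot X$ into a tensor automorphism of the fiber functor $F|_{\Rep(G)}$, hence an element of $G\cong\Aut_\otimes(F|_{\Rep(G)})$, and it obtains the $G$-crossed braiding indirectly, by viewing $\C$ and $\C^{\rev}$ inside the crossed product $(\C^G)^*_\C=\C\rtimes G$ as left and right multiplication functors and transporting the canonical isomorphisms \eqref{rel braiding} through \eqref{crossed prod}. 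You instead read the degree of a simple $A$-module off the algebra automorphism of $A=\Fun(G)$ produced by conjugating the module structure by the double braiding, and you build the $G$-braiding directly from the braiding of the ambient category by re-labelling the $A$-action on the second factor through $T_g$ (the DGNO-style construction). The two packagings of the grading are equivalent (a translation automorphism of $\Fun(G)$ is the same datum as a tensor automorphism of the fiber functor), and your direct construction buys explicit formulas for the crossed braiding at the cost of the $A$-balancedness and single-degree verifications you rightly flag as the technical core, while the paper's dual-category route makes the crossed-braiding coherences nearly automatic but is less explicit. In both treatments the mutual-inverseness step is reduced to the plain equivalences $(\C_G)^G\cong\C$ and $(\C^G)_G\cong\C$ plus a check that the added structures correspond, which neither you nor the paper carries out in detail (the paper defers to \cite{Ki, Mu4, DGNO}).
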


We shall now sketch the proof of this theorem.
An alternative approach is given in \cite{DGNO}. 

Suppose $\C$ is a braided $G$-crossed fusion category. We define a
braiding $\tilde{c}$ on its equivariantization $\C^G$ as follows.

Let $(X,\, \{u_g\}_{g\in G})$ and $(Y,\, \{v_g\}_{g\in G})$ be
objects in $\C^G$. Let $X =\oplus_{g\in G}\,X_g$ be a decomposition
of $X$ with respect to the grading of $\C$.  Define an isomorphism
\begin{equation}
\label{braiding from equiv}
\tilde c_{X,Y}: X\ot Y =\bigoplus_{g\in G}\, X_g\ot Y \xrightarrow{\oplus\,c_{X_g,Y}}
\bigoplus_{g\in G}\, T_g(Y) \ot X_g \xrightarrow{\oplus\, v_g \ot \id_{X_g}}
\bigoplus_{g\in G}\, Y \ot X_g = Y\ot X,
\end{equation}
It follows from condition (a) of Definition~\ref{crossed} that
$\tilde c_{X,Y}$ respects the equivariant structures, i.e., it is an
isomorphism in $\C^G$. Its naturality is clear.  The fact that
$\tilde c$ is a braiding on $\C^G$ (i.e., the hexagon axioms)
follows from the commutativity of diagrams \eqref{G-hex1} and
\eqref{G-hex2}. It is easy to check that $\tilde c$ restricts to the
standard braiding on $\Rep(G)=\Vec^G\subset \C^G$. Hence, $\C^G$
contains a Tannakian subcategory $\Rep(G)$.

Conversely, let $\C$ be a braided fusion category with braiding $c$
containing a Tannakian subcategory $\Rep(G)$.   The restriction of
the de-equivariantization functor $F$ from \eqref{de-eq functor} on
$\Rep(G)$ is isomorphic to the fiber functor $\Rep(G)\to \Vec$.
Hence for any object $X$ in $\C_G$ and any object $V$ in $\Rep(G)$
we have an automorphism  of $F(V)\ot X$ defined as the composition
\begin{equation}
\label{FVX}
F(V) \ot X \xrightarrow{\sim} X \ot F(V) \xrightarrow{\sim} F(V) \ot X  ,
\end{equation}
where the first isomorphism comes from the fact that $F(V) \in \Vec$
and the second one is \eqref{Central Func}.

When $X$ is simple we have an isomorphism $\text{Aut}_\C(F(V) \ot X)
\cong \text{Aut}_{\Vec} (F(V))$, hence we  obtain a tensor
automorphism $i_X$  of $F|_{\Rep(G)}$. Since
$\Aut_{\otimes}(F|_{\Rep(G)})\cong G$ we have an assignment
$X\mapsto i_X \in G$. The hexagon axiom of braiding implies that
this assignment is multiplicative, i.e., that $i_Z =i_X i_Y$ for any
simple object $Z$ contained in $X\ot Y$. Thus, it defines a
$G$-grading on $\C$:
\begin{equation}
\label{grading via i}
\C =\bigoplus_{g\in G}\, \C_g,\mbox{ where } \O(\C_g) =\{  X \in \O(\C) \mid i_X =g \}.
\end{equation}
It is straightforward to check that $i_{T_g(X)} =ghg^{-1}$ whenever $i_X=h$.

Finally, to construct a $G$-crossed braiding on $\C$ observe that
$\C$ and $\C^\text{rev}$  are embedded into the crossed product
category $\C\rtimes G = (\C^G)^*_\C$ as subcategories
$\C_\text{left}$ and $\C_\text{right}$ consisting, respectively,  of
functors of left and right multiplications by objects of $\C$.
Clearly, there  is a natural family of isomorphisms
\begin{equation}
\label{rel braiding}
X\ot Y \xrightarrow {\sim}Y \ot X,\qquad X \in \C_\text{left},\, Y\in \C_\text{right},
\end{equation}
satisfying obvious compatibility conditions.
Note that $\C_\text{left}$  is
identified with the diagonal subcategory of $\C\rtimes G$
spanned by objects $X\boxtimes g,\, X\in \C_g,\, g\in G,$ and $\C_\text{right}$
is identified with the trivial component subcategory $\C \boxtimes e$.
Using  \eqref{crossed prod} we conclude that isomorphisms \eqref{rel braiding}
give rise to a $G$-crossed braiding on $\C$.

One can check that the two above constructions (from braided fusion categories
containing $\Rep(G)$ to braided $G$-crossed categories and vice versa) are inverses
of each other, see \cite{Ki, Mu4, DGNO} for details.

\begin{remark}
\label{when CG is nd}
Let $\C=\oplus_{g\in G}\,\C_g$ be a braided $G$-crossed  fusion category.
It was shown in \cite{DGNO} that
the braided category $\C^G$ is non-degenerate if and only if  $\C_e$
is non-degenerate and the $G$-grading of $\C$ is faithful.
\end{remark}

\section{The center of a graded fusion category}
\label{main section}


Let $G$ be a finite group and let $\D$ be a fusion category.
Throughout this section  $\C$ will denote
a fusion category with a faithful $G$-grading, whose trivial
component is $\D$, i.e., $\C$ is a $G$-extension of $\D$:
\begin{equation}
\label{graded C}
\C =\bigoplus_{g\in G}\,\C_g,\qquad \C_e = \D.
\end{equation}
In what follows we consider only {\em faithful}  gradings, i.e., such that
$\C_g\neq 0$, for all $g\in G$. An object of $\C$ contained in
$\C_g$ will be called {\em homogeneous} of degree $g$.

Our goal is to describe the center $\Z(\C)$  as an equivariantization
of the relative center $\Z_\D(\C)$ defined in Section~\ref{relative center}.

\subsection{The relative center $\Z_\D(\C)$ as a braided $G$-crossed category}
\label{rel center triv comp}

Let us define a canonical braided $G$-crossed  category
structure on $\Z_\D(\C)$.

First of all, there is an obvious faithful $G$-grading on $\Z_\D(\C)$:
\begin{equation}
\label{ZDC grading}
\Z_\D(\C) =\bigoplus_{g\in G}\, \Z_\D(\C_g).
\end{equation}
Indeed, it is clear that for every simple object $X$ of $\Z_\D(\C)$ the forgetful
image of $X$ in $\C$ must be homogeneous.

Next, let  us define the action of $G$ on $\Z_\D(\C)$.
Take $g, h\in G$.

Let $\Fun_{\D \bt \D^\rev } (\C_g,\, \C_h)$ denote the category of
$\D$-bimodule  functors from $\C_g$ to $\C_h$. Clearly, it is
a $\Z(\D)$-bimodule category.

\begin{proposition}
\label{towards G action}
Let $g,h\in G$.  The functors
\begin{eqnarray}
\label{Lgh}
L_{g,h}  &:& \Z_\D(\C_h)  \xrightarrow{\sim}  \Fun_{\D \bt \D^\rev }(\C_g,\, \C_{hg}) : Z \mapsto Z\ot ?, \\
\label{Rgh}
R_{g,h}  &:& \Z_\D(\C_h)  \xrightarrow{\sim}  \Fun_{\D \bt \D^\rev }(\C_g,\, \C_{gh}) : Z \mapsto ? \ot Z.
\end{eqnarray}
are equivalences of $\Z(\D)$-bimodule categories.

\end{proposition}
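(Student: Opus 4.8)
The plan is to prove this by exhibiting explicit quasi-inverses and checking that everything is compatible with the $\Z(\D)$-bimodule structures. I will focus on $L_{g,h}$; the argument for $R_{g,h}$ is the mirror image (using left rather than right multiplication). First I would observe that the assignment $Z \mapsto Z \ot ?$ makes sense: if $Z$ is a $\D$-bimodule functor $\D \to \C_h$, i.e. an object of $\Z_\D(\C_h)$ equipped with a central structure $\gamma = \{\gamma_V : V \ot Z \xrightarrow{\sim} Z \ot V\}_{V\in\D}$, then for $X \in \C_g$ the object $Z \ot X$ lies in $\C_{hg}$, and the central structure $\gamma$ provides exactly the data needed to make $X \mapsto Z \ot X$ into a $\D$-bimodule functor $\C_g \to \C_{hg}$: the left $\D$-module structure $V \ot (Z \ot X) \to Z \ot (V \ot X)$ comes from $\gamma_V$ together with the associativity constraints of $\C$, and the right $\D$-module structure is the obvious one coming from associativity alone. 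The compatibility of these two structures (the bimodule functor axiom) is precisely the hexagon-type coherence \eqref{central object} defining a central structure, together with pentagon for $\alpha$. So $L_{g,h}$ is a well-defined functor, and it is plainly additive and $k$-linear.

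Next I would construct the quasi-inverse. Given a $\D$-bimodule functor $F : \C_g \to \C_{hg}$, I want to recover an object of $\Z_\D(\C_h)$. The idea is to ``divide by $\C_g$'': since $\C_g$ is an invertible $\D$-bimodule category (this is a standard fact about faithfully graded fusion categories — each $\C_g$ is invertible as a $\D$-bimodule category, with inverse $\C_{g^{-1}}$, because $\C_g \ot \C_{g^{-1}} \supseteq \D$ generates $\D$), the category $\Fun_{\D\bt\D^\rev}(\C_g, \C_{hg})$ is equivalent to $\Fun_{\D\bt\D^\rev}(\D, \C_{hg} \ot_\D \C_{g^{-1}})$. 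Now $\C_{hg}\ot_\D \C_{g^{-1}}$ should be identified with $\C_h$ (this is just the multiplication in $\C$ descending to the relative tensor product, which is where faithfulness of the grading and the fusion-category structure enter), and $\Fun_{\D\bt\D^\rev}(\D, \C_h)$ is by Definition~\ref{center of a module} nothing but $\Z_\D(\C_h)$. Concretely, the quasi-inverse sends $F$ to the object $F(\be_g) \ot \be_{g^{-1}}$-type construction — more precisely, one picks a simple object $\delta \in \C_g$ and its dual $\delta^* \in \C_{g^{-1}}$ and sets $Z := $ (a suitable summand of) $F(\delta) \ot \delta^*$, with central structure induced from the bimodule structure of $F$; one then checks this is independent of the choice of $\delta$ up to canonical isomorphism. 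Verifying that these two assignments are mutually quasi-inverse is the routine-but-lengthy part, driven by the evaluation/coevaluation triangle identities in $\C$.

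Finally, I would check $\Z(\D)$-bimodule compatibility. The left $\Z(\D)$-action on $\Z_\D(\C_h)$ is by tensoring on the left with objects of $\Z(\D) = \Z_\D(\D)$ inside $\C$, and on $\Fun_{\D\bt\D^\rev}(\C_g, \C_{hg})$ it is by post-composition with the corresponding endofunctor of $\C_{hg}$; the right action is analogous via pre-composition (and objects of $\C_g$). That $L_{g,h}$ intertwines these is immediate from associativity of $\ot$ in $\C$ and from the definition \eqref{centrproduct} of the tensor product on relative centers — one is just comparing $(W \ot Z) \ot X$ with $W \ot (Z \ot X)$ for $W \in \Z(\D)$, $Z \in \Z_\D(\C_h)$, $X \in \C_g$, and tracking how central structures multiply, which is exactly \eqref{centrproduct}.

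I expect the main obstacle to be the identification $\C_{hg} \ot_\D \C_{g^{-1}} \cong \C_h$ as $\D$-bimodule categories and, relatedly, establishing that each $\C_g$ is invertible as a $\D$-bimodule category in a way that is natural enough to push the equivalences through coherently; the bookkeeping of associativity constraints in the quasi-inverse verification is tedious but presents no conceptual difficulty once invertibility is in hand. An alternative, perhaps cleaner, route that avoids relative tensor products altogether is to define the quasi-inverse directly by $F \mapsto \bigoplus_{\delta \in \Irr(\C_g)} F(\delta)\ot\delta^*$ with the obvious central structure, and to verify directly that $L_{g,h}$ applied to this recovers $F$ via the natural isomorphism $Z \ot X \cong \bigoplus_\delta (Z \ot \delta)\ot\delta^* \ot X$ built from $\sum_\delta \coev_\delta$; this is the version I would actually write out.
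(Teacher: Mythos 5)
There is a real gap here, and it sits exactly where you put the weight. The ``version you would actually write out'' does not work: if $F = Z\ot ?$, then $\bigoplus_{\delta\in\Irr(\C_g)}F(\delta)\ot\delta^* \cong Z\ot\bigl(\bigoplus_{\delta}\delta\ot\delta^*\bigr)$, which has Frobenius--Perron dimension $\FPdim(Z)\,\FPdim(\C_g)$, not $\FPdim(Z)$; and the claimed natural isomorphism $Z\ot X\cong\bigoplus_{\delta}(Z\ot\delta)\ot\delta^*\ot X$ is false --- the coevaluations only give a split monomorphism, so this recipe is not a quasi-inverse. The vague fallback (``a suitable summand of $F(\delta)\ot\delta^*$'') is also not canonical without extra structure: the point is precisely that one must quotient/descend over $\D$ (a relative tensor product, not a plain tensor product over $k$), and that is the step your construction skips. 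Your abstract route --- invertibility of $\C_g$ as a $\D$-bimodule category and $\C_{hg}\boxtimes_\D\C_{g^{-1}}\simeq\C_h$ --- would indeed give the statement by adjunction, but these two facts are of essentially the same depth as the proposition itself, and the justification you offer (``$\C_g\ot\C_{g^{-1}}$ generates $\D$'') only yields surjectivity of the multiplication functor, not that it induces an equivalence out of the relative tensor product. So as written, the proof either rests on an incorrect explicit construction or on unproven input that you yourself flag as the main obstacle.

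For comparison, the paper's proof avoids relative tensor products of module categories entirely. It first reduces \eqref{Lgh} to showing that $M_{g,h}:\C_h\to\Fun_\D(\C_g,\C_{hg}),\ X\mapsto X\ot ?$, is an equivalence of categories (bimodule-functor structures on $X\ot ?$ are exactly central structures on $X$ --- this matches your first paragraph). Then it chooses a simple $X_g\in\C_g$, forms the algebra $A_g:=X_g\ot X_g^*$ in $\D$, and uses that $\C_g$ is equivalent to the category of $A_g$-modules in $\D$ as a right $\D$-module category; consequently right $\D$-module functors $\C_g\to\C_{hg}$ are identified with $A_g$--$A_{hg}$-bimodules in $\D$, and the equivalence $Y\mapsto X_g\ot Y\ot X_{hg}^*$ from $\C_h$ to such bimodules is checked to coincide with $M_{g,h}$. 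The quasi-inverse is then the relative tensor product over the algebras (e.g.\ $M\mapsto X_g^*\ot_{A_g}M$ up to the usual identifications), which is exactly the descent your $\bigoplus_\delta F(\delta)\ot\delta^*$ fails to perform. If you want to keep your outline, you should either prove the invertibility statement $\C_g\boxtimes_\D\C_{g^{-1}}\simeq\D$ (which in practice is done by the same algebra argument), or switch to the Morita-theoretic construction directly.
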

\begin{proof}
We prove that \eqref{Lgh} is an equivalence.   Let  $\Fun_{\D}(\C_g,\, \C_{hg})$  be the category
of  right  $\D$-module functors  from $\C_g$ to  $\C_{hg}$.  It suffices to prove that
\begin{equation}
\label{Mgh}
M_{g,h}: \C_h \to \Fun_{\D}(\C_g,\, \C_{hg}) : X \mapsto  X\ot ?
\end{equation}
is an equivalence. Indeed, $\D$-bimodule functor structures on $M_{g,h}(X)$ for
$X\in~\C_h$ are in bijection with central structures on $X$.

For every  $g\in G$ choose a simple object $X_g \in \C_g$.
Then $A_g := X_g \ot X_g^*$ is an algebra in $\D$.  The category of left $A_g$-modules
in $\C$  is equivalent to $\C$  as a right $\C$-module category and the category
of $A_g$-modules in $\D$  is equivalent to $\C_g$  as a right $\D$-module category.

It follows that for all $g,h\in G$ there is an equivalence  $Y \mapsto X_g \ot Y \ot X_{hg}^*$
between $\C$ and the category of $A_g-A_{hg}$
bimodules in $\C$.

It restricts to an equivalence between  $\C_h$
and the category of $A_g-A_{hg}$ bimodules in  $\D$. It is easy to see that
the latter equivalence  coincides with \eqref{Mgh}.

The proof of equivalence \eqref{Rgh} is completely similar.
 \end{proof}

Let us define tensor functors
\begin{equation}
\label{pre-action on ZDC}
T_{g,h} :=  L^{-1}_{g, ghg^{-1}} R_{g, h} : \Z_\D(\C_h) \to \Z_\D(\C_{ghg^{-1}}), \quad g,h\in G,
\end{equation}
and set
\begin{equation}
\label{action on ZDC}
T_g: =\bigoplus_{h\in G}\, T_{g,h}  : \Z_\D(\C) \to \Z_\D(\C).
\end{equation}

It follows
that there is a natural family of isomorphisms:
\begin{equation}
\label{braiding on ZDC}
c_{X,Y} : X\ot Y \xrightarrow{\sim} T_{g}(Y)\ot X, \qquad  X\in \C_g, \, Y\in \Z_\D(\C),
\quad g\in G,
\end{equation}
satisfying natural compatibility conditions. 
Since the grading \eqref{ZDC grading} is faithful we have $T_g(\Z_\D(\C_h)) \subset \Z_\D(\C_{ghg^{-1}})$.

Take $X_1\in \C_{g_1},\, X_2\in \C_{g_2}$ and set $X =X_1\ot X_2$ in \eqref{braiding on ZDC}.
We obtain a natural isomorphism
\[
T_{g_1}T_{g_2}(Y)\ot X_1 \ot X_2  \xrightarrow{\sim} T_{g_1g_2}(Y) \ot X_1 \ot X_2.
\]
and, hence, an isomorphism of functors $T_{g_1} T_{g_2} \xrightarrow{\sim} T_{g_1 g_2}$.
Thus, the assignment $g \mapsto T_g$
is an action of $G$ on $\Z_\D(\C)$ by tensor autoequivalences.

Suppose that $X$ is an object
in $\Z(\C_g)$. Then both sides of \eqref{braiding on ZDC} have structure of  objects in
$\Z_\D(\C)$ obtained by composing central structures of $X$ and $Y$.

\begin{lemma}
\label{G crossed on ZDC}
Isomorphisms  \eqref{braiding on ZDC} define a $G$-braiding on $\Z_\D(\C)$.
\end{lemma}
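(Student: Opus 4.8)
\emph{Plan.} The faithful grading \eqref{ZDC grading}, the $G$-action \eqref{action on ZDC} (which by construction carries $\Z_\D(\C_h)$ into $\Z_\D(\C_{ghg^{-1}})$), and the natural family \eqref{braiding on ZDC} are already in place, so proving the lemma amounts to verifying the three compatibility conditions (a), (b), (c) of Definition~\ref{crossed} for this data. The key input beyond the definitions is an explicit description of $c$: by \eqref{pre-action on ZDC} one has $T_g(Y) = L^{-1}_{g,ghg^{-1}}(R_{g,h}(Y))$ for $Y \in \Z_\D(\C_h)$, so there is a canonical isomorphism $L_{g,ghg^{-1}}(T_g(Y)) \xrightarrow{\sim} R_{g,h}(Y)$ of $\D$-bimodule functors $\C_g \to \C_{gh}$, and evaluating it at $X \in \C_g$ and unwinding \eqref{Lgh}--\eqref{Rgh} recovers $c_{X,Y}\colon X\otimes Y \xrightarrow{\sim} T_g(Y)\otimes X$. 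Three consequences will be used repeatedly: (i) for fixed $Y$, the family $\{c_{X,Y}\}_{X\in\C_g}$ underlies a morphism of $\D$-bimodule functors, so $c$ is compatible with the left and right $\D$-actions on $\C_g$ via the central structures of $Y$ and of $T_g(Y)$; (ii) $c$ is natural in $Y$; (iii) since the equivalences $L$ and $R$ are compatible with composition of functors --- which under \eqref{Lgh}--\eqref{Rgh} corresponds to the tensor products of $\C$ and of $\Z_\D(\C)$, and which is exactly what yields the monoidal structures $\mu_g$ on the $T_g$ and the isomorphisms $\gamma_{g,h}\colon T_gT_h \xrightarrow{\sim} T_{gh}$ --- the isomorphism $c$ satisfies ``hexagon''-type relations separately in each of its two variables.

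\emph{Condition (b).} Restricting the $\D$-bimodule morphism from consequence (i) along the left, resp.\ right, $\D$-action on $\C_g$ expresses $c_{D\otimes X,\,Y}$, resp.\ $c_{X\otimes D,\,Y}$, through $c_{X,Y}$ and the central structure of $T_g(Y)$, resp.\ of $Y$; this is diagram \eqref{G-hex1} in the cases where $Y$ or $Z$ lies in $\D$. Combining this with the compatibility of $c_{X,-}$ with the tensor product \eqref{centrproduct} of $\Z_\D(\C)$ (that is, with $\mu_g$) from consequence (iii) yields the general case: both composites in \eqref{G-hex1} are morphisms of $\D$-bimodule functors in $X$ from $X\mapsto X\otimes(Y\otimes Z)$ to $X\mapsto (T_g(Y)\otimes T_g(Z))\otimes X$, and their equality reduces to the compatibility of the canonical isomorphisms of the adjoint equivalences $L_{g,ghg^{-1}} \dashv L^{-1}_{g,ghg^{-1}}$ with the composition constraints of $L$ and $R$.

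\emph{Condition (c).} Setting $X = X_1\otimes X_2$ with $X_i \in \C_{g_i}$ in \eqref{braiding on ZDC} and invoking the compatibility of $c_{-,Y}$ with the tensor product of $\C$ from consequence (iii) decomposes $c_{X_1\otimes X_2,\,Y}$ in terms of $c_{X_1,\,T_{g_2}(Y)}$, $c_{X_2,\,Y}$, and $\gamma_{g_1,g_2}$. Since $\gamma_{g_1,g_2}$ was defined precisely so that this decomposition holds, diagram \eqref{G-hex2} commutes.

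\emph{Condition (a): the main obstacle.} This is the $T_g$-equivariance of $c$, and it is where the real work lies. One applies the functor $T_g$ to the defining isomorphism of $c_{X,Y}$ for $X\in\C_h$ and compares the result with the defining isomorphism of $c_{T_g(X),\,T_g(Y)}$, noting $T_g(X)\in\C_{ghg^{-1}}$: both are instances of the same canonical isomorphism, but attached to the grading elements $h$ and $ghg^{-1}$, so matching them requires the naturality of that isomorphism together with the compatibility of \eqref{Lgh}--\eqref{Rgh} with conjugation of gradings, plus careful bookkeeping of the constraints $\mu_g$ and $\gamma_{g,h}$ occurring in \eqref{G-hex0}. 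No idea beyond the universal-property description of $c$ is needed, but \eqref{G-hex0} is a large diagram and this accounting is the bulk of the proof. Once (a), (b), (c) are checked, $c$ is by definition a $G$-braiding on $\Z_\D(\C)$, proving Lemma~\ref{G crossed on ZDC}; this is also exactly what is needed for $\Z_\D(\C)$ to fit into the equivariantization construction of Theorem~\ref{KM thm}, which is how it is used in Theorem~\ref{ZCG =ZDC}.
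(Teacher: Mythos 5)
Your opening reduction --- ``proving the lemma amounts to verifying the three compatibility conditions (a), (b), (c)'' --- skips the point to which the paper's proof is actually devoted, and this is a genuine gap. A $G$-braiding on $\Z_\D(\C)$ must consist of isomorphisms \emph{in the category} $\Z_\D(\C)$, whereas the family \eqref{braiding on ZDC} is a priori only a family of isomorphisms in $\C$ between underlying objects, natural in $X\in\C_g$. When $X$ carries a central structure $\gamma$ and $Y$ carries $\delta$, the two sides $X\ot Y$ and $T_g(Y)\ot X$ acquire relative central structures by composing $\gamma$ with $\delta$ (resp.\ with $T_g(\delta)$) as in \eqref{centrproduct}, and one must check that $c_{X,Y}$ intertwines them. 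Your proposal never states this requirement, and none of your consequences (i)--(iii) is invoked to establish it. This is exactly what the paper verifies with its displayed diagram: for $V\in\D$ the two triangles commute because $c_{-,Y}\colon ?\ot Y\Rightarrow T_g(Y)\ot\,?$ is a morphism of $\D$-bimodule functors (your consequence (i)), and the middle parallelogram commutes by naturality of $c_{-,Y}$ applied to the morphism $\gamma_V\colon V\ot X\to X\ot V$ of $\C_g$. So the ingredients are present in your write-up, but the argument is never assembled, and without it the lemma is not proved.

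Apart from this omission, your route is essentially the paper's: you use the same description of $c$ as the canonical isomorphism $L_{g,ghg^{-1}}(T_g(Y))\cong R_{g,h}(Y)$ evaluated at $X\in\C_g$, and your treatment of condition (c) --- that $\gamma_{g_1,g_2}$ is defined precisely so that the decomposition of $c_{X_1\ot X_2,\,Y}$ holds --- matches the construction preceding the lemma. Since the paper itself dismisses conditions (a)--(c) as straightforward, your sketch-level verification there (in particular leaving the bookkeeping for (a) undone) is comparable in rigor to the paper's own text. One smaller inaccuracy: your phrase that the bimodule compatibility ``is diagram \eqref{G-hex1} in the cases where $Y$ or $Z$ lies in $\D$'' is loose, since $\D$ is not a subcategory of $\Z_\D(\C)$; what is meant is compatibility with the $\D$-actions encoded in the central structures, which is again consequence (i), not a literal special case of \eqref{G-hex1}.
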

\begin{proof}
That isomorphisms  \eqref{braiding on ZDC} are indeed morphisms in $\Z_\D(\C)$
follows from commutativity of the diagram
\begin{equation}
\xymatrix{
X \ot Y \ot V \ar[rr]^{\id_X\ot \delta_V} \ar[d]_{c_{X,Y}\ot \id_V}  &&
X\ot V \ot Y \ar[rr]^{\gamma_V\ot\id_Y } \ar[dll]_{c_{X\ot V, Y}}  &&
V\ot X \ot Y \ar[dll]_{c_{V\ot X, Y}}  \ar[d]^{\id_V \ot c_{X,Y}} \\
T_g(Y)\ot X\ot V \ar[rr]_{\id_{T_g(Y)}\ot \gamma_V} &&
T_g(Y)\ot V\ot X \ar[rr]_{T_g(\delta)_V\ot \id_X}  &&
V \ot T_g(Y)\ot X,
}
\end{equation}
where $(X,\, \gamma) \in \Z_\D(\C_g),\, (Y,\, \delta)\in \Z_\D(\C),$ and $V\in \D$.
Indeed, the parallelogram in the middle commutes by naturality of $c$, and
the two triangles commute since the natural isomorphisms $? \ot Y \xrightarrow{\sim}
T_g(Y)\ot \,? : \C_g \to \C_{gh},\, g,h\in G,$  commute with left and right actions of $\D$.

It is straightforward to check that isomorphisms $c_{X,Y}$ satisfy the compatibility conditions
of Definition~\ref{crossed}.
\end{proof}

The above constructions and arguments prove the following

\begin{theorem}
\label{ZDC theorem}
Let $G$ be a finite group and let $\C$ be a fusion category with a faithful $G$-grading
whose trivial component is $\D$. The relative  center $\Z_\D(\C)$ has a canonical structure
of a braided  $G$-crossed category.
\end{theorem}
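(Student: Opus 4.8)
The plan is to check that the grading \eqref{ZDC grading}, the action $g\mapsto T_g$ of \eqref{action on ZDC}, and the isomorphisms \eqref{braiding on ZDC} equip $\Z_\D(\C)$ with the structures (i)--(iii) of Definition~\ref{crossed} and satisfy the compatibility conditions (a)--(c). The decomposition \eqref{ZDC grading} is a $G$-grading because for a simple object $(X,\gamma)$ of $\Z_\D(\C)$ each homogeneous component $X_g$ of $X$ is a subobject in $\Z_\D(\C)$ (the central structure $\gamma_V$, $V\in\D$, preserves degree since $\D\subset\C_e$), so $X$ is homogeneous; and the tensor product of $\Z_\D(\C)$ lies over that of $\C$. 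This grading is faithful: taking $g=e$ in \eqref{Rgh}, the component $\Z_\D(\C_h)$ is equivalent to $\Fun_{\D\bt\D^\rev}(\D,\C_h)$, which is nonzero because $\C_h\neq 0$. This gives condition (i).

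For condition (ii), recall that by Proposition~\ref{towards G action} the functor $T_{g,h}=L^{-1}_{g,ghg^{-1}}R_{g,h}$ is characterized up to isomorphism by the property that $T_{g,h}(Y)\in\Z_\D(\C_{ghg^{-1}})$ is the unique object carrying a natural isomorphism of $\D$-bimodule functors on $\C_g$ between $?\ot Y$ and $T_{g,h}(Y)\ot ?$; evaluated at $X\in\C_g$ this natural isomorphism is $c_{X,Y}$ of \eqref{braiding on ZDC}. In particular $T_g=\bigoplus_h T_{g,h}$ carries $\Z_\D(\C_h)$ into $\Z_\D(\C_{ghg^{-1}})$, which is condition (ii). The tensor structure on each $T_g$, and the monoidal structure $\gamma_{g_1,g_2}\colon T_{g_1}T_{g_2}\xrightarrow{\sim}T_{g_1g_2}$, are produced from the same universal property: composing instances of \eqref{braiding on ZDC} for $X_i\in\C_{g_i}$ gives natural isomorphisms $T_{g_1}T_{g_2}(Y)\ot X_1\ot X_2\cong X_1\ot X_2\ot Y\cong T_{g_1g_2}(Y)\ot X_1\ot X_2$, and one cancels $X_1\ot X_2$ using faithfulness of the grading (similarly for the monoidal constraint of $T_g$ itself). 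One must then check that $\gamma$ obeys the coherence (pentagon) axiom making $g\mapsto T_g$ a genuine $G$-action; this is a diagram chase against the associativity constraint of $\C$.

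Finally, the $G$-braiding required by (iii) is \eqref{braiding on ZDC}. By Lemma~\ref{G crossed on ZDC} these isomorphisms are morphisms in $\Z_\D(\C)$ — this uses naturality of $c$ together with the fact that the underlying maps $?\ot Y\xrightarrow{\sim}T_g(Y)\ot ?$ commute with the left and right $\D$-actions — and they satisfy axioms (a), (b), (c). I expect the main obstacle to be exactly this verification of (a)--(c), together with the coherence of $\gamma$ from Step~(ii): each is a multi-step diagram chase. The reason they all go through is that $c_{X,Y}$ is essentially tautological — it is nothing but the identification, built into the definition of $T_g$ through \eqref{Lgh}--\eqref{Rgh}, of the $\D$-bimodule functor $?\ot Y$ with $T_g(Y)\ot ?$ — so conditions (a)--(c) unwind into statements about naturality of $c$, the defining universal property of $T_g$, and the pentagon for the associativity constraints $\alpha$ of $\C$, in the same spirit as the proof sketched for Theorem~\ref{KM thm}.
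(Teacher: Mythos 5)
Your proposal follows essentially the same route as the paper: the grading \eqref{ZDC grading}, the functors $T_g$ built from the equivalences of Proposition~\ref{towards G action}, the isomorphisms \eqref{braiding on ZDC} with the $X=X_1\ot X_2$ cancellation argument for the monoidal structure of the action, and Lemma~\ref{G crossed on ZDC} for the braiding axioms. The details you flag as remaining diagram chases are exactly the ones the paper also leaves as straightforward verifications, so the proposal is correct.
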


\begin{remark}
In particular, to every $G$-extension of a fusion category $\D$ we assigned an
action of $G$ by braided autoequivalences of $\Z(\D)$. This assignment is studied
in detail in \cite{ENO3}.
\end{remark}

\subsection{The center $\Z(\C)$ as an equivariantization}
\label{canonical G-cross}

As before, let $G$ be a finite group and let  $\C$ be a fusion category
with a faithful $G$-grading  \eqref{graded C}. Let $\Z_\D(\C)$ be the
braided $G$-crossed category constructed in Section~\ref{rel center triv comp}.

\begin{theorem}
\label{ZCG =ZDC}
There is an equivalence of braided fusion categories
\begin{equation}
\label{ZDCG=ZC}
\Z_\D(\C)^G \xrightarrow{\sim} \Z(\C).
\end{equation}
\end{theorem}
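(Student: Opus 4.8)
The plan is to identify $\Z(\C)$ with the equivariantization $\Z_\D(\C)^G$ by exhibiting mutually inverse tensor functors, and then to check that the braidings match. The starting point is the forgetful tensor functor $\Z(\C) \to \Z_\D(\C)$ of \eqref{ZC to ZDC}, sending $(X,\gamma)$ to $(X,\gamma|_\D)$. I would first upgrade this to a functor $\Z(\C) \to \Z_\D(\C)^G$: given a central object $(X,\gamma)$ of $\C$, write $X = \bigoplus_{g\in G} X_g$ in the $G$-grading, and observe that for each $g$ the restricted central structure $\gamma|_\D$ together with the full central isomorphisms $\gamma_{X_h}$ (applied to the homogeneous pieces) provide, via the $G$-braiding \eqref{braiding on ZDC} on $\Z_\D(\C)$, a collection of isomorphisms $u_g : T_g(X,\gamma|_\D) \xrightarrow{\sim} (X,\gamma|_\D)$. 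Concretely, $T_g$ was built from $L_{g,h}^{-1}R_{g,h}$, i.e. from the natural isomorphism $? \ot Z \cong Z \ot ?$ on homogeneous components; the central structure $\gamma$ on the full object $X$ supplies exactly such isomorphisms with $Z = X$ restricted to $\C_g$, and the hexagon \eqref{central object} for $\gamma$ is precisely the cocycle condition of Definition~\ref{Gequiv object}. This produces the functor $\Phi : \Z(\C) \to \Z_\D(\C)^G$.

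Next I would construct the inverse. A $G$-equivariant object of $\Z_\D(\C)$ is a pair $((X,\gamma), \{u_g\})$ where $(X,\gamma) \in \Z_\D(\C)$, so $X$ carries half of a central structure (isomorphisms with objects of $\D$ only), and the equivariant data $u_g$ should recover the central isomorphisms with homogeneous objects of degree $g$. The idea is that for $Y \in \C_g$, one defines $\gamma_Y : Y \ot X \xrightarrow{\sim} X \ot Y$ as the composite of the $G$-braiding $c_{Y,X}\colon Y\ot X \to T_g(X)\ot Y$ (available since $Y\in\C_g$) with $u_g \ot \id_Y \colon T_g(X)\ot Y \to X \ot Y$. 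One then checks that this, together with the given $\gamma|_\D$, assembles into a genuine central structure on $X \in \C$ satisfying \eqref{central object} for all objects of $\C$; the verification splits into the case of objects of $\D$ (where it is $\gamma$), the case of homogeneous objects (where it follows from the $G$-crossed hexagons \eqref{G-hex1}, \eqref{G-hex2} and the equivariance axiom for $u$), and the mixed/general case by writing an arbitrary object as a sum of homogeneous ones. This gives $\Psi : \Z_\D(\C)^G \to \Z(\C)$, and $\Phi\circ\Psi$, $\Psi\circ\Phi$ are visibly identities because each construction simply reorganizes the same underlying isomorphisms. Both functors are tensor functors: $\Phi$ obviously respects \eqref{centrproduct}, and $\Psi$ does too since the monoidal structure on $\Z_\D(\C)^G$ is inherited from that of $\Z_\D(\C)$.

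Having an equivalence of fusion categories, I would then check it is braided. The braiding on $\Z(\C)$ is $c_{(X,\gamma),(X',\gamma')} = \gamma_{X'}$ by \eqref{braiding}, and the braiding $\tilde c$ on $\Z_\D(\C)^G$ is the one produced in the proof of Theorem~\ref{KM thm} from the $G$-crossed braiding, namely \eqref{braiding from equiv}: $\tilde c_{X,X'} = \bigoplus_g (v_g \ot \id_{X_g})\circ c_{X_g, X'}$ where $X = \bigoplus_g X_g$. Under $\Psi$, the equivariant datum $v_g$ of $(X',\gamma')$ corresponds exactly to the central isomorphisms of $X'$ with homogeneous objects of degree $g$, and $c_{X_g,X'}$ is the $G$-braiding of $\Z_\D(\C)$; so the composite $(v_g\ot\id)\circ c_{X_g,X'}$ is precisely $\gamma'_{X_g}$ restricted appropriately, and summing over $g$ gives $\gamma'_X$ — matching \eqref{braiding} up to the identification. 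So $\Psi$ intertwines $\tilde c$ and the canonical braiding of $\Z(\C)$.

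The main obstacle will be the bookkeeping in verifying that the "reassembled" family $\gamma_Y$ in the construction of $\Psi$ really satisfies the hexagon \eqref{central object} for all $Y, Z \in \C$ — in particular the mixed case where $Y$ and $Z$ lie in different graded components. This is exactly where one must invoke the three $G$-crossed braiding axioms \eqref{G-hex0}–\eqref{G-hex2} and the compatibility of $\gamma_{g,h}$ with everything, and it is the reverse of the computation already used (in the excerpt) to define the $G$-crossed structure on $\Z_\D(\C)$ and to build $\tilde c$ in Theorem~\ref{KM thm}; so morally it is forced, but writing it out cleanly requires care. A slicker alternative, which I would mention, is to avoid the explicit diagram chase entirely: realize both $\Z(\C)$ and $\Z_\D(\C)^G$ as duals of the same fusion category with respect to the same module category. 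Indeed $\Z(\C) \cong (\C \bt \C^{\rev})^*_\C$ and, by Remark following \eqref{centrproduct}, $\Z_\D(\C) \cong (\D \bt \C^{\rev})^*_\C$; combined with $(\C\rtimes G)^*_\C \cong \C^G$-type statements and the equivalence $\Z(\C\rtimes G)\cong\Z(\C^G)$ recalled in Section~\ref{group actions} applied to the $G$-crossed category $\Z_\D(\C)$, one gets $\Z_\D(\C)^G \cong \Z(\Z_\D(\C)_e \text{-completion})$; matching Frobenius–Perron dimensions ($\FPdim \Z_\D(\C)^G = |G|\,\FPdim\C\,\FPdim\D = \FPdim\C^2 = \FPdim\Z(\C)$) then pins down the equivalence. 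Either way, the braided compatibility is the final, and most delicate, point to nail down.
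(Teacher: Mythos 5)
Your main argument is exactly the paper's proof: you pass from a $G$-equivariant object of $\Z_\D(\C)$ to a central object of $\C$ by composing the $G$-braiding \eqref{braiding on ZDC} with the equivariant isomorphisms (as in \eqref{braiding from equiv}), and conversely you read off the equivariant structure on the forgetful image of an object of $\Z(\C)$ from its central structure via the identification of $T_g$ through $L_{g,h}^{-1}R_{g,h}$, with the braidings matching by comparing \eqref{braiding} and \eqref{braiding from equiv}. The ``slicker alternative'' you append (dual categories plus Frobenius--Perron dimension count) is not needed and would not by itself pin down a braided equivalence, but your primary construction is correct and coincides with the paper's.
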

\begin{proof}
We see from \eqref{braiding on ZDC} that a $G$-equivariant object in $\Z_\D(\C)$
has a structure of a central  object in $\C$ defined as in \eqref{braiding from equiv}.
It follows from definitions that the corresponding tensor functor $\Z_\D(\C)^G \to \Z(\C)$
is braided.

Conversely, given an object $Y$ in $\Z(\C)$
consider its forgetful image $\tilde{Y}$ in $\Z_\D(\C)$.  Combining the central structure
of $Y$ with isomorphism \eqref{braiding on ZDC}  we obtain natural isomorphisms
\[
\tilde{Y} \ot X \xrightarrow{\sim} T_g(\tilde{Y} ) \ot X,\qquad  X\in \C_g,\, g\in G,
\]
which give rise to a $G$-equivariant structure on $\tilde{Y}$.  Hence, we have
a tensor functor $\Z(\C) \to \Z_\D(\C)^G$.  It is clear that
the above two functors are quasi-inverses of each other.
\end{proof}

Let us describe the Tannakian subcategory $\E \cong \Rep(G) \subset \Z(\C)$
corresponding to equivalence \eqref{ZDCG=ZC}.
For any representation $\pi : G \to GL(V)$ of the grading
group $G$ consider an object $I_\pi$ in $\Z(\C)$
where $I_\pi = V \ot \be$ as an object of $\C$
with the permutation isomorphism
\begin{equation}
\label{defining E}
c_{I_\pi, X}:= \pi(g) \ot \id_X:
 I_\pi \ot X \cong X \ot I_\pi,\qquad \mbox{ when }
X\in \C_g.
\end{equation}
Then $\E$ is the subcategory of $\Z(\C)$ consisting
of  objects $I_\pi$, where $\pi$ runs through all
finite-dimensional representations of $G$.

\begin{remark}
Here is another description of the subcategory $\E$: it consists
of all objects in $\Z(\C)$ sent to $\Vec$ by the forgetful functor $\Z(\C)\to \Z_\D(\C)$.
\end{remark}

\begin{corollary}
\label{E 'by E}
Let $\C$ be a faithfully $G$-graded fusion category with the trivial component $\D$.
Let $\E =\Rep(G)\subset \Z(\C)$ be the Tannakian subcategory constructed above.
Then the de-equivariantization category $(\E')_G$ is braided tensor equivalent to $\Z(\D)$.
\end{corollary}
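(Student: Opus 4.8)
The plan is to combine Theorem~\ref{ZCG =ZDC} with the bijection between intermediate braided fusion categories and $G$-stable subcategories furnished by the de-equivariantization machinery of Section~\ref{deeq}. Start from the equivalence $\Z(\C)\cong\Z_\D(\C)^G$ and recall that under the correspondence of Theorem~\ref{KM thm}, the trivial component $\Z_\D(\C)_e$ of the braided $G$-crossed category $\Z_\D(\C)$ is recovered by de-equivariantizing $\Z(\C)$ by $\E=\Rep(G)$; that is, $\Z(\C)_G\cong\Z_\D(\C)_e$ as braided fusion categories (here using that the $G$-grading on $\Z_\D(\C)$ is faithful, so that the de-equivariantization is honestly braided rather than merely $G$-crossed). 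So the first task is to identify $\Z_\D(\C)_e$ with $\Z(\D)$. This is immediate from the grading \eqref{ZDC grading}: the trivial component of $\Z_\D(\C)$ is $\Z_\D(\C_e)=\Z_\D(\D)=\Z(\D)$, and one checks that the braiding induced on this trivial component by the $G$-crossed braiding \eqref{braiding on ZDC} (restricting $X$ to $\C_e=\D$) is exactly the canonical braiding \eqref{braiding} of $\Z(\D)$. Thus $\Z(\C)_G\cong\Z(\D)$ as braided fusion categories.

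Next I would pass from the full de-equivariantization $\Z(\C)_G$ to the modularization. The point is that $\Z(\D)$ is nondegenerate (it is a center), so the subcategory $\E=\Rep(G)\subset\Z(\C)$ must be exactly the one whose de-equivariantization kills all the degeneracy: concretely, de-equivariantizing $\Z(\C)$ by $\E$ already produces the nondegenerate category $\Z(\D)$, which forces $\E' = $ (the preimage of $\Z(\D)_e$'s appropriate subcategory) — more precisely, by the last part of Section~\ref{deeq}, if $\E=\C'$ inside a braided category then $\C_G$ is nondegenerate, and conversely. Here, however, $\Z(\C)$ is already nondegenerate, so $\E\subsetneq\E'$ in general and $\E'$ is the centralizer. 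The correct statement to invoke is: de-equivariantization by $\E$ restricted to objects centralizing $\E$ yields the braided category $(\E')_G$, and this braided category is nondegenerate precisely when $\E=\E'{}'$, which holds since $\Z(\C)$ is nondegenerate (so $\E''=\E$ forces... ) — I want to show $(\E')_G$ is nondegenerate with the right dimension. By Remark~\ref{when CG is nd}, $\Z_\D(\C)^G=\Z(\C)$ is nondegenerate iff $\Z_\D(\C)_e=\Z(\D)$ is nondegenerate and the grading is faithful; both hold, which is consistent, and the same remark applied to the $G$-crossed structure tells us $\Z(\D)$ is recovered as $\Z_\D(\C)_e$.

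So the cleanest route: the de-equivariantization $F:\Z(\C)\to\Z(\C)_G\cong\Z_\D(\C)_e=\Z(\D)$ sends $\E$ to $\Vec$, and by the bijection in Section~\ref{deeq} between subcategories of $\Z(\C)$ containing $\E$ and $G$-stable subcategories of $\Z(\C)_G=\Z(\D)$, the subcategory $\E'\subset\Z(\C)$ (which contains $\E$, since $\E$ is symmetric, hence Tannakian, hence $\E\subseteq\E'$... wait, need $\E\subseteq\E'$: yes, $\Rep(G)$ is symmetric so it centralizes itself) corresponds to a $G$-stable subcategory $\overline{\E'}\subseteq\Z(\D)$. I claim $\overline{\E'}=\Z(\D)$, i.e.\ $\E'$ corresponds to the whole de-equivariantization, equivalently $F(\E')=\Z(\D)$. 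This is because $F(\E')$ is exactly the centralizer of $F(\E)=\Vec$ in $\Z(\C)_G$ under the braided structure — and the centralizer of $\Vec$ is everything. Hence $(\E')_G := $ the de-equivariantization of $\E'$ by $G$ (which makes sense since $\E\subseteq\E'$) equals $F(\E')$ equipped with its braiding $=\Z(\D)$ as a braided fusion category. Chasing that the braiding matches is the one genuinely substantive check.

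The main obstacle I expect is precisely the verification that the braiding on $\Z_\D(\C)_e$ coming from the $G$-crossed braiding agrees with the intrinsic braiding on $\Z(\D)=\Z(\C_e)$ given by \eqref{braiding}, and correspondingly that the braided structure transported along $F$ to $(\E')_G$ is the standard one inherited from $\Z(\C)$ restricted to $\E'$. Both amount to unwinding the definitions \eqref{braiding on ZDC}, \eqref{Central Func}, \eqref{FVX} and checking that the various natural isomorphisms composed in Theorem~\ref{ZCG =ZDC} and in Section~\ref{deeq} compose to the identity constraint on the $\E$-part; this is diagram-chasing of the sort already carried out in Lemma~\ref{G crossed on ZDC}. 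Everything else — that $\E\subseteq\E'$ because $\E$ is symmetric, that $F(\E')=\Z(\D)$ because $F(\E)=\Vec$ and centralizers are computed compatibly under central functors, and that the dimensions match ($\FPdim((\E')_G)=\FPdim(\E')/|G|=\FPdim(\Z(\C))/\FPdim(\E)^2\cdot\ldots$, cross-checked against $\FPdim(\Z(\D))=\FPdim(\D)^2$) — is routine bookkeeping with Frobenius--Perron dimensions and the results quoted from \cite{Mu2, DGNO}.
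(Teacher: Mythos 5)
Your argument contains a genuine error at its foundation. You assert that de-equivariantizing $\Z(\C)$ by $\E=\Rep(G)$ yields the trivial component $\Z_\D(\C)_e$, i.e.\ that $\Z(\C)_G\cong\Z(\D)$ as braided categories. This is false whenever $|G|>1$: since equivariantization and de-equivariantization are mutually inverse, Theorem~\ref{ZCG =ZDC} gives $\Z(\C)_G\cong\Z_\D(\C)$, the \emph{whole} relative center, which is only a braided $G$-crossed category (cf.\ Remark~\ref{when CG is nd} and the remark at the end of Section~\ref{deeq}; note $\E\not\subset\Z(\C)'=\Vec$, so $\Z(\C)_G$ is not braided). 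A dimension count makes the failure visible: $\FPdim(\Z(\C)_G)=\FPdim(\C)^2/|G|=|G|\,\FPdim(\D)^2$, whereas $\FPdim(\Z(\D))=\FPdim(\D)^2$. Your later ``repair'' is also incorrect: the claim that $F(\E')$ equals the centralizer of $F(\E)=\Vec$, hence all of $\Z(\C)_G$, would identify $(\E')_G$ with $\Z_\D(\C)$, of dimension $\FPdim(\C)\FPdim(\D)$, again contradicting $\FPdim((\E')_G)=\FPdim(\E')/|G|=\FPdim(\D)^2$. Centralizers are not transported through the de-equivariantization functor in this naive way; the two mistakes happen to compensate so that your final sentence reads like the corollary, but no valid chain of implications supports it.

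What is missing is precisely the one fact the paper's proof rests on: for a braided category containing $\E=\Rep(G)$, the de-equivariantization of the centralizer, $(\E')_G$, is the \emph{trivial component of the $G$-grading} on the de-equivariantized category (the grading coming from the $G$-crossed structure, defined via the monodromy \eqref{FVX} with objects of $\E$ -- an object has degree $e$ exactly when it comes from an object centralizing $\E$). Applying this with $\Z(\C)_G=\Z_\D(\C)$ and using the grading \eqref{ZDC grading} gives $(\E')_G\cong\Z_\D(\C_e)=\Z(\D)$, and your (correct) observation that the $G$-crossed braiding \eqref{braiding on ZDC} restricts on $\Z_\D(\C_e)$ to the canonical braiding of $\Z(\D)$ then finishes the argument. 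So the ingredient you verified about the trivial component is indeed part of the proof, but the bridge from $\E'$ to that trivial component must go through the grading-by-monodromy statement, not through an alleged equality $\Z(\C)_G\cong\Z(\D)$ or preservation of centralizers under $F$.
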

\begin{proof}
The statement follows from Theorem~\ref{ZCG =ZDC} since
$(\E')_G$  is the trivial component of the grading of $\Z(\C)_G =\Z_\D(\C)$.
\end{proof}

\begin{remark}
The above assignment
\begin{equation}
\left\{
 G\text{-extensions  of  } \D
\right\} \mapsto
\left\{
 \text{braided }  G\text{-crossed extensions of  } \Z(\D)
\right\}
\end{equation}
can be thought of as an analogue of the  center construction for $G$-extensions.
\end{remark}

Next, we describe simple objects of $\Z(\C)$. For any conjugacy class $K$ in $G$ fix
a representative $a_K\in K$. Let $G_K$ denote the centralizer of $a_K$ in $G$.
Note that the action \eqref{action on ZDC} of $G$ on $\Z_\D(\C)$
restricts  to the action of  $G_K$  on $\Z_\D(\C_{a_K})$.

\begin{proposition}
\label{freaking simple objects}
There is a bijection between the set of isomorphism classes of simple objects of $\Z(\C)$
and pairs $(K,X)$, where $K$ is a conjugacy class of $G$ and $X$ is a simple
$G_K$-equivariant object of $\Z_\D(\C_{a_K})$.
\end{proposition}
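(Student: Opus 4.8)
\textbf{Proof plan for Proposition~\ref{freaking simple objects}.}
The plan is to deduce this directly from Theorem~\ref{ZCG =ZDC} together with the general description of simple objects in an equivariantization given in Proposition~\ref{simples in CG}. By Theorem~\ref{ZCG =ZDC} we have an equivalence $\Z(\C) \cong \Z_\D(\C)^G$ of (braided) fusion categories, so it suffices to classify simple objects of $\Z_\D(\C)^G$. Now $\Z_\D(\C)$ is, by Theorem~\ref{ZDC theorem}, a braided $G$-crossed fusion category; in particular it carries a faithful $G$-grading $\Z_\D(\C) = \bigoplus_{g\in G}\Z_\D(\C_g)$ (faithfulness coming from that of the grading \eqref{graded C}) and an action $g\mapsto T_g$ of $G$ with $T_g(\Z_\D(\C_h)) \subset \Z_\D(\C_{ghg^{-1}})$, as recorded in \eqref{action on ZDC} and the lines following it.

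First I would invoke Proposition~\ref{simples in CG} with the fusion category $\Z_\D(\C)$ in place of $\C$: its hypotheses are precisely that we have a $G$-graded fusion category equipped with a $G$-action permuting the graded components by conjugation, which is exactly the braided $G$-crossed structure on $\Z_\D(\C)$. Since the grading is faithful, the subgroup $H = \{g \in G \mid \Z_\D(\C_g) \neq 0\}$ equals all of $G$. Proposition~\ref{simples in CG} then yields a bijection between isomorphism classes of simple objects of $\Z_\D(\C)^G$ and pairs $(K, X)$, where $K$ runs over conjugacy classes of $G$ and $X$ is a simple $G_K$-equivariant object of $\Z_\D(\C_{a_K})$, the action of $G_K$ being the restriction of \eqref{action on ZDC} (which lands in $\Z_\D(\C_{a_K})$ since $G_K$ centralizes $a_K$, as noted just before the statement). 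Composing this bijection with the one induced by the equivalence of Theorem~\ref{ZCG =ZDC} gives the claimed bijection for $\Z(\C)$.

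There is essentially no obstacle here beyond bookkeeping: the only point requiring a word of care is checking that the $G$-action on $\Z_\D(\C)$ used implicitly in the statement (namely \eqref{action on ZDC}) is the same one that enters the equivariantization $\Z_\D(\C)^G$ in Theorem~\ref{ZCG =ZDC} — which is immediate, since that theorem is stated for the braided $G$-crossed category of Theorem~\ref{ZDC theorem}, whose action is \eqref{action on ZDC} — and that $G_K$ indeed preserves the component $\C_{a_K}$, which holds because $T_g(\Z_\D(\C_{a_K})) \subset \Z_\D(\C_{g a_K g^{-1}}) = \Z_\D(\C_{a_K})$ for $g\in G_K$. Hence the proof is a one-line application of Theorem~\ref{ZCG =ZDC} and Proposition~\ref{simples in CG}.
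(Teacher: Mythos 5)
Your proposal is correct and coincides with the paper's own argument: the paper likewise reduces via Theorem~\ref{ZCG =ZDC} to simple objects of $\Z_\D(\C)^G$ and then cites Proposition~\ref{simples in CG}. Your extra checks (faithfulness of the grading, $H=G$, and that $G_K$ preserves $\Z_\D(\C_{a_K})$) are just the bookkeeping the paper leaves implicit.
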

\begin{proof}
By Theorem~\ref{ZCG =ZDC} we
have $\Z(\C) \simeq \Z_\D(\C)^G$  so the stated parameterization
is immediate from the description of simple objects of  the equivariantization category  given in
Proposition~\ref{simples in CG}.
\end{proof}


\subsection{A criterion for a graded fusion category to be group-theoretical.}
\label{when gt}

We have seen in Corollary~\ref{E 'by E} that $\Z(\C)$ contains a
Tannakian subcategory $\E=\Rep(G)$ such that the
de-equivariantization $(\E')_G$ is braided equivalent to $\Z(\D)$,
where $\D$ is the trivial component of $\C$. Furthermore, by
Remark~\ref{BGC rems}, there is a canonical action of $G$ on
$\Z(\D)$, by braided autoequivalences. By \cite{DGNO}, Tannakian
subcategories of $\Z(\C)$ containing  $\E$ bijectively correspond to
$G$-stable Tannakian subcategories of $(\E')_G \simeq \Z(\D)$.
Combining this observation with Theorem~\ref{gt DGNO}(ii) we obtain
the following criterion.

\begin{corollary}
\label{graded gt criterion} A graded fusion category $\C
=\bigoplus_{g\in G}\,\C_g,\quad \C_e = \D$, is group-theoretical if
and only if  $\Z(\D)$ contains a $G$-stable Lagrangian subcategory.
\end{corollary}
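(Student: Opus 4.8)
The plan is to deduce Corollary~\ref{graded gt criterion} directly by combining the structural description of $\Z(\C)$ from Theorem~\ref{ZCG =ZDC} with the group-theoreticity criterion of Theorem~\ref{gt DGNO} and the Galois-type correspondence for de-equivariantization recalled in Section~\ref{deeq}. The key observation, already assembled in the text preceding the statement, is that $\Z(\C) \cong \Z_\D(\C)^G$ contains the canonical Tannakian subcategory $\E \cong \Rep(G)$ of Corollary~\ref{E 'by E}, and that $(\E')_G$ is braided equivalent to $\Z(\D)$. So by Theorem~\ref{gt DGNO}, $\C$ is group-theoretical iff $\Z(\C)$ contains \emph{some} Lagrangian subcategory $\L$, and the work is to transfer this condition across the equivalence $(\E')_G \simeq \Z(\D)$.

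The first step is to reduce from arbitrary Lagrangian subcategories of $\Z(\C)$ to ones containing $\E$. Here I would invoke the standard fact (from \cite{DGNO}) that any Lagrangian subcategory $\L$ of $\Z(\C)$, being its own centralizer and Tannakian, must contain any Tannakian subcategory lying in its centralizer; since $\E \subseteq \L' = \L$ fails a priori, one instead argues that $\L \cap \E$ and $\L \vee \E$ behave well --- more cleanly, one uses that $\L$ Lagrangian forces $\E \subseteq \L'$ to be equivalent to checking $\E \subseteq \L$ whenever $\FPdim(\L)$ is maximal, or simply cites that $\Z(\C)$ has a Lagrangian subcategory iff it has one containing the fixed Tannakian $\E$ (this is where the forgetful functor $\Z(\C) \to \C$ being a fiber functor on $\E$ matters). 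The cleanest route is: $\C$ is group-theoretical $\iff$ $\Z(\C)$ is group-theoretical as a braided category $\iff$ $\Z(\C)$ contains a Lagrangian subcategory; and since $\E$ is Tannakian with $\E' \supseteq \E$, and the forgetful functor kills $\E$, a Lagrangian subcategory may be chosen inside $\E'$ and containing $\E$ after intersecting/pushing forward --- so the condition becomes: $\E'$ contains a Lagrangian subcategory of $\Z(\C)$ containing $\E$.

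The second step is the transfer. By the bijection in Section~\ref{deeq} between subcategories of $\E'$ containing $\E = \Rep(G)$ and $G$-stable subcategories of $(\E')_G$, which preserves Tannakian subcategories, a Lagrangian subcategory $\L$ of $\Z(\C)$ with $\E \subseteq \L \subseteq \E'$ corresponds to a $G$-stable Tannakian subcategory $\bar\L$ of $(\E')_G \simeq \Z(\D)$. I would then check that $\L$ being Lagrangian (i.e.\ $\L = \L'$ inside $\Z(\C)$) corresponds exactly to $\bar\L$ being Lagrangian inside $\Z(\D)$: the dimension count $\FPdim(\L)^2 = \FPdim(\Z(\C)) = |G|^2 \FPdim(\D)$ together with $\FPdim(\L) = |G| \FPdim(\bar\L)$ (de-equivariantization divides dimension by $|G|$) gives $\FPdim(\bar\L)^2 = \FPdim(\D) = \FPdim(\Z(\D))/\FPdim(\D) \cdot \FPdim(\D)$... more precisely $\FPdim(\Z(\D)) = \FPdim(\D)^2$ and $\FPdim(\bar\L)^2 = \FPdim(\Z(\D))$, so $\bar\L$ is Lagrangian; conversely a $G$-stable Lagrangian $\bar\L \subseteq \Z(\D)$ equivariantizes back to a Lagrangian $\L \subseteq \Z(\C)$. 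Combining the two steps yields: $\C$ is group-theoretical $\iff$ $\Z(\D)$ contains a $G$-stable Lagrangian subcategory.

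The main obstacle I anticipate is the first step --- justifying that one may assume the Lagrangian subcategory of $\Z(\C)$ contains the distinguished copy $\E$. This needs the fact that $\E$ is not just any Tannakian subcategory but the canonical one associated to the grading, whose image under the forgetful functor $\Z(\C) \to \C$ is $\Vec$; this ``connectedness'' property is what forces a Lagrangian subcategory to be compatible with $\E$ (cf.\ \cite{DGNO}, where Lagrangian subcategories of $\Z(\C)$ are classified by pairs (subgroup, cocycle) and the relevant ones are visibly built over the whole of $G$). The second step, the dimension bookkeeping and the preservation of the Lagrangian property under (de-)equivariantization, is routine given Theorem~\ref{KM thm}, Remark~\ref{when CG is nd}, and the results recalled in Section~\ref{deeq}.
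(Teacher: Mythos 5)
Your overall route is the same as the paper's: produce the canonical Tannakian subcategory $\E=\Rep(G)\subset \Z(\C)$ of Corollary~\ref{E 'by E} with $(\E')_G\simeq\Z(\D)$, transport Tannakian subcategories of $\Z(\C)$ containing $\E$ across the (de-)equivariantization bijection of Section~\ref{deeq} to $G$-stable Tannakian subcategories of $\Z(\D)$, check that the Lagrangian property is preserved, and conclude with Theorem~\ref{gt DGNO}. Your second step is essentially right: a Tannakian subcategory containing $\E$ is automatically contained in $\E'$ (it is symmetric), de-equivariantization divides $\FPdim$ by $|G|$, and the dimension count works --- though note the slip $\FPdim\Z(\C)=\FPdim(\C)^2=|G|^2\FPdim(\D)^2$, not $|G|^2\FPdim(\D)$, which you only implicitly correct.

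The genuine weak point is exactly the one you flag and then do not resolve: in the ``only if'' direction you must pass from ``$\Z(\C)$ contains some Lagrangian subcategory'' (Theorem~\ref{gt DGNO}) to ``$\Z(\C)$ contains a Lagrangian subcategory containing $\E$'', and none of your suggested arguments establishes this. It is false in general that a Lagrangian subcategory $\L\subset\Z(\C)$ must contain $\E$ or even lie in $\E'$: already for $\C=\Vec_{\bZ/2\bZ\times\bZ/2\bZ}$ with $G=\bZ/2\bZ$ and $\D=\Vec_{\bZ/2\bZ}$, the Lagrangian subcategory of $\Z(\C)$ spanned by the objects $(a,1)$, $a\in A$, with trivial half-braidings neither contains $\E$ nor centralizes it. Likewise, ``intersecting/pushing forward'' $\L$ with respect to $\E$ is not a defined operation here, and there is no maximality argument that turns $\E\subseteq\L'$ into $\E\subseteq\L$; your appeal to ``connectedness'' of the canonical $\E$ is a heuristic, not a proof. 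To be fair, the paper itself does not reprove this reduction: its argument consists precisely of the combination you describe, with the needed correspondence (Tannakian, hence Lagrangian, subcategories of $\Z(\C)$ containing $\E$ versus $G$-stable ones of $\Z(\D)$, together with the Lagrangian criterion) delegated to the results of [DGNO] cited in Theorem~\ref{gt DGNO}. So your proposal matches the paper's strategy, but as written the crucial existence statement for a Lagrangian subcategory compatible with $\E$ remains an unproved assertion rather than a step you can claim.
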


We will use Corollary~\ref{graded gt criterion} in  Section~\ref{when TY is gt}
 to characterize group-theoretical Tambara-Yamagami categories.

We can specialize Corollary~\ref{graded gt criterion}  to equivariantization categories.
Let $G$ be a finite group acting on a fusion category $\C$.  The equivariantization  $\C^G$ is Morita equivalent to
the  crossed product category $\C \rtimes G$, see Section~\ref{group actions}, therefore,
$\Z(\C^G)\cong \Z(\C\rtimes G)$.  Clearly, the trivial component of $\Z(\C\rtimes G)_G$
is $\Z(\C)$ and  the canonical action of $G$ on $\Z(\C)$  is induced from the action of
$G$ on $\C$ in an obvious way.

\begin{corollary}
\label{equiv gt criterion}
The equivariantization $\C^G$ is group-theoretical if and only if there exists a $G$-stable Lagrangian
subcategory of $\Z(\C)$.
\end{corollary}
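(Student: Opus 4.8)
The plan is to derive Corollary~\ref{equiv gt criterion} directly from Corollary~\ref{graded gt criterion} by observing that the crossed product category $\C\rtimes G$ is a faithfully $G$-graded fusion category whose trivial component is $\C$. First I would recall, as stated in Section~\ref{group actions}, that $\C\rtimes G = \bigoplus_{g\in G}\,\C\bt g$ is faithfully $G$-graded with trivial component $(\C\rtimes G)_e = \C\bt e \cong \C$; so Corollary~\ref{graded gt criterion} applies to $\D := \C$ and $\C := \C\rtimes G$, giving that $\C\rtimes G$ is group-theoretical if and only if $\Z(\C)$ contains a $G$-stable Lagrangian subcategory, where the relevant action of $G$ on $\Z(\C)$ is the canonical one arising from the braided $G$-crossed structure on $\Z_{\C}(\C\rtimes G)$.

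Next I would connect this to $\C^G$. As recalled in Section~\ref{group actions}, $\C$ is a right $\C\rtimes G$-module category and $(\C\rtimes G)^*_\C \cong \C^G$; since Morita equivalent fusion categories have braided equivalent centers (\cite{Mu3}), one has $\Z(\C^G)\cong \Z(\C\rtimes G)$. By Theorem~\ref{gt DGNO}, a fusion category is group-theoretical if and only if its center contains a Lagrangian subcategory; hence $\C^G$ is group-theoretical $\iff$ $\Z(\C^G)\cong\Z(\C\rtimes G)$ contains a Lagrangian subcategory $\iff$ $\C\rtimes G$ is group-theoretical. Combining with the conclusion of the previous paragraph yields the statement.

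The one point that requires care — and which I expect to be the main obstacle — is verifying that the canonical $G$-action on $\Z(\C)$ produced by Theorem~\ref{ZDC theorem} applied to the grading of $\C\rtimes G$ really coincides with the $G$-action on $\Z(\C)$ that is ``induced from the action of $G$ on $\C$ in an obvious way.'' Concretely, under the equivalence $\Z(\C\rtimes G)_G \cong \Z_\C(\C\rtimes G)$ of Theorem~\ref{ZCG =ZDC}, the trivial component $\Z_\C((\C\rtimes G)_e) = \Z_\C(\C\bt e) = \Z(\C)$ carries the $G$-action from \eqref{action on ZDC}; I would unwind definitions \eqref{pre-action on ZDC}–\eqref{action on ZDC} using the explicit tensor product \eqref{crossed prod} of $\C\rtimes G$ to check that $T_g$ on $\Z(\C)$ is, up to the identifications, the functor $(X,\gamma)\mapsto (T_g(X), T_g(\gamma))$ with $\gamma$ transported along the monoidal autoequivalence $T_g$ of $\C$. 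This is a routine but somewhat tedious diagram chase using the algebras $A_g = X_g\ot X_g^*$ from the proof of Proposition~\ref{towards G action}; once it is in place, the corollary follows formally. It is also worth remarking (as the paper does) that the trivial component of $\Z(\C\rtimes G)_G$ being $\Z(\C)$ and the compatibility of the forgetful functor $\Z(\C\rtimes G)\to\Z_\C(\C\rtimes G)$ with the $G$-gradings together make the identification of $G$-actions transparent.

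Finally, I would note that the statement is phrased as an ``if and only if'' with no genuine asymmetry: Lagrangian subcategories of $\Z(\C^G)\cong\Z(\C\rtimes G)$ that survive to be Lagrangian in $\Z(\C)$ (via the de-equivariantization $\Z(\C\rtimes G)_G = \Z_\C(\C\rtimes G)$ and the bijection of \cite{DGNO} between subcategories containing $\E=\Rep(G)$ and $G$-stable subcategories of the trivial component) correspond precisely to $G$-stable Lagrangian subcategories of $\Z(\C)$, and this correspondence preserves the Tannakian and Lagrangian properties — exactly as used in the proof of Corollary~\ref{graded gt criterion}. Thus no extra work beyond citing Corollary~\ref{graded gt criterion} and the Morita-invariance $\Z(\C^G)\cong\Z(\C\rtimes G)$ is needed.
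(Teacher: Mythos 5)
Your proposal is correct and follows essentially the same route as the paper: apply Corollary~\ref{graded gt criterion} to the faithfully $G$-graded category $\C\rtimes G$ with trivial component $\C$, and transfer the conclusion to $\C^G$ via the Morita equivalence $(\C\rtimes G)^*_\C\cong\C^G$ and the resulting braided equivalence $\Z(\C^G)\cong\Z(\C\rtimes G)$. The only difference is that you propose to verify explicitly that the canonical $G$-action on $\Z(\C)$ coincides with the one induced from the action on $\C$, a point the paper simply asserts as obvious.
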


\begin{remark}
Let $G$ act on $\C$ as before. One can check (independently from the results of this section)
that the $G$-set of Lagrangian subcategories of $\Z(\C)$ is isomorphic to the $G$-set of
indecomposable pointed $\C$-module categories.  This isomorphism is given by the map
constructed in \cite[Theorem 4.17]{NN}.  Thus,  the criterion in Corollary~\ref{equiv gt criterion}
is the same as \cite[Corollary 3.6]{Ni}.
\end{remark}

\subsection{Example: the relative center of a crossed product category}

Let $G$ be a finite group and let $g \mapsto T_g,\, g\in G,$ be an
action of $G$ on a fusion category $\D$. Let $\C:= \D \rtimes G$ be
the crossed product category defined in Section~\ref{group actions}.
It has a natural grading
\[
\C =\bigoplus_{g\in G}\, \C_g,\quad \mbox{where } \C_g = \{ Y \bt g
\mid Y\in \D\}.
\]

Let us describe the braided $G$-crossed fusion category structure on
the relative center
\[
\Z_\D(\C) =\bigoplus_{g\in G}\, \Z_\D(\C_g).
\]
By definition, the objects of $\Z_\D(\C_g)$ are pairs $(Y\bt
g,\,\gamma)$, where $Y\in \D$ and
\begin{equation}
\label{gamma crpr} \gamma =\{ \gamma_X : X\ot Y \xrightarrow{\sim} Y
\ot T_g(X) \}_{X \in \D}
\end{equation}
is a natural family of isomorphisms satisfying natural
compatibility conditions. Thus, $\Z_\D(\C_g)$ can be viewed as a
``deformation" of $\Z(\D)$ by means of  $T_g$.

The action of $G$ on $\D$ induces an action $h \mapsto \tilde{T}_h$
on $\Z_\D(\C)$ defined as follows.  Applying $T_h, \, h\in G,$ to
$\gamma_{T_{h^{-1}}(X)}$ in \eqref{gamma crpr} we obtain an
isomorphism
\begin{equation}
\label{action crpr} \tilde{\gamma}_X: X \ot T_h(Y)
\xrightarrow{\sim} T_h(Y)  \ot T_{hgh^{-1}}(X).
\end{equation}
Set $\tilde{T}_h(Y\bt g,\,\gamma) := (T_{h}(Y)\bt
hgh^{-1},\,\tilde\gamma)$. Thus, $\tilde{T}_h$ maps   $\Z_\D(\C_g)$
to $\Z_\D(\C_{hgh^{-1}})$.

Finally, the $G$-braiding between  objects $(X\bt h) \in
\Z_\D(\C_h)$ and $(Y\bt g) \in \Z_\D(\C_g)$  comes from
the following isomorphism
\begin{eqnarray*}
(X \bt h) \ot (Y \bt g)
&=& (X \ot T_h(Y)) \bt hg  \\
&\xrightarrow{\tilde{\gamma}}&  (T_h(Y)  \ot  T_{hgh^{-1}}(X)) \bt hg \\
&=&  (T_h(Y) \bt hgh^{-1}) \ot (X \bt h) \\
&=&  \tilde{T}_h(Y\bt g)  \ot (X \bt h).
\end{eqnarray*}


By Theorem~\ref{ZCG =ZDC}, the category $\Z(\D\rtimes G)\cong \Z(\D^G)$ is
equivalent to the equivariantization of the above braided
$G$-crossed category.

\section{The centers of Tambara-Yamagami categories}
\label{TY section}
Our goal in this section is to apply techniques developed in Section~\ref{main section}
to Tambara-Yamagami categories introduced in \cite{TY} (see Section~\ref{def TY} below
for the definition).
Namely, using the techniques in Section~\ref{main section}
we establish a criterion for a Tambara-Yamagami category to be group-theoretical.
We then use this criterion together with Corollary~\ref{equiv gt criterion}
to produce a series of non  group-theoretical semisimple Hopf algebras.
In this section we assume that our ground field $k$ is the field of
complex numbers $\mathbb{C}$.
We begin by recalling the definition of  Tambara-Yamagami category.

\subsection{Definition of the  Tambara-Yamagami category}
\label{def TY}
In \cite{TY} D.~Tambara and S.~Yamagami completely classified all
$\mathbb{Z}/2\mathbb{Z}$-graded
fusion categories  in which all but one
simple object are invertible. They showed that any such category
$\TY(A,\chi, \tau)$ is determined, up to an equivalence, by a
finite Abelian group $A$, a non-degenerate symmetric bilinear form
$\chi: A\times A \to k^\times$, and a square root $\tau\in k$ of
$|A|^{-1}$. The category $\TY(A,\chi, \tau)$ is described as
follows. It is a skeletal category (i.e., such that any two
isomorphic objects are equal) with simple objects $\{a\mid a\in
A\}$ and $m$, and tensor product
\[
a\ot b =a+b,\quad a\ot m = m,\quad m \ot a=m,\quad m \ot m
=\bigoplus_{a\in A}\, a,
\]
for all $a, b\in A,$ and the unit object $0\in A$.
The associativity constraints are given by
\begin{eqnarray*}[lcl:lcl]
\alpha_{a, b, c} & = & \id_{a+b+c}, &
\alpha_{a, b, m} & = & \id_{m}, \\
\alpha_{a, m, b} & = & \chi(a,b)\,\id_{m}, &
\alpha_{m, a, b} & = & \id_{m}, \\
\alpha_{a, m, m} & = & \bigoplus_{b\in A}\, \id_{b}, &
\alpha_{m, a, m} & = & \bigoplus_{b\in A}\, \chi(a,b)\,\id_{b}, \\
\alpha_{m, m, a} & = & \bigoplus_{b\in A}\, \id_{b}, &
\alpha_{m, m, m} & = & \bigoplus_{a,b\in A}\, \tau\chi(a,b)^{-1}\,\id_{m}.
\end{eqnarray*}
The unit constraints are the identity maps.
The category $\TY(A,\chi,\tau)$ is rigid with $a^*=-a$ and $m^*=m$
(with obvious evaluation and coevaluation maps).

Let $n:=|A|$. The dimensions of simple objects of
$\TY(A,\chi,\tau)$ are $\FPdim(a)=1,\, a\in A$, and
$\FPdim(m)=\sqrt{n}$. We have $\FPdim(\TY(A,\chi,\tau))=2n$.

Let $\bZ/2\bZ =\{ 1,\,\delta\}$. The $\bZ/2\bZ-$grading
on $\TY(A,\chi,\tau)$ is
$$
\TY(A,\chi,\tau) = \TY(A,\chi,\tau)_1 \oplus \TY(A,\chi,\tau)_\delta
$$
where $\TY(A,\chi,\tau)_1$ is the full fusion subcategory
generated by the invertible objects $a \in A$ and $\TY(A,\chi,\tau)_\delta$
is the full abelian subcategory generated by the object $m$.

Let $\C:= \TY(A,\chi,\tau)$ and $\D:= \TY(A,\chi,\tau)_1$.

\subsection{Braided $\bZ/2\bZ$-crossed category $\Z_{\D}(\C)$}
\label{subsection:simples of relative center}

First,  let us describe the simple objects of $\Z_\D(\C)= \Z(\C_1) \oplus \Z_\D(\C_\delta)$.
Let $\widehat{A} := \Hom(A, k^\times)$.  Clearly, $\Z(\C_1) =\Z(\Vec_A)$,  so
its simple objects are parameterized by  $(a,\phi) \in  A \times \widehat{A}$.
The object  $X_{(a,\phi)}$ corresponding to such a pair is equal to $a$ as an object
of $\C$ and its central structure is given by
\begin{equation}
\label{relative central structure on a}
\phi(x)\id_{a+x}  : x\ot X_{(a,\phi)} \xrightarrow{\sim}  X_{(a,\phi)}  \ot x.
\end{equation}

Using Definition~\ref{center of a module} we see that
simple objects of $\Z_\D(\C_\delta)$ are parameterized by functions
$\rho: A \to k^\times$ satisfying
\begin{equation}
\label{rhochi}
\rho(a+b) = \chi(a,b)^{-1} \rho(a) \rho(b),
\qquad a,b \in A
\end{equation}
(clearly, such functions form a torsor over $\widehat{A}$). The corresponding
object $Z_{\rho}$ is equal to $m$ as an object of $\C$ and has the relative
central structure
\begin{equation}
\label{relative central structure on m}
\rho(x)\id_m : x \ot Z_\rho  \xrightarrow{\sim} Z_\rho \ot x,\quad x\in A.
\end{equation}

Let $A\to \widehat{A}: a\mapsto \widehat{a}$ be the homomorphism defined by
$\widehat{a}(x) =\chi(x,\, a)$.   Similarly, let $\widehat{A}\to A: \phi\mapsto \widehat{\phi}$
be the homomorphism defined by $\phi(x)=\chi(x, \, \widehat{\phi})$ (recall that $\chi$
is non-degenerate). Clearly, these two maps are inverses of each other.

The fusion rules of $\Z_\D(\C)$ are computed using formula \eqref{centrproduct} :
\begin{eqnarray*}
X_{(a,\phi)} \ot X_{(b,\psi)}  &=& X_{(a+b,\phi+\psi)},  \\
X_{(a,\phi)} \ot Z_\rho &=&   Z_{\rho\phi (-\widehat{a})},\\
Z_\rho  \ot X_{(a,\phi)} &=&  Z_{\rho\phi (-\widehat{a})},\\
Z_{\rho'}  \ot Z_{\rho} &=& \bigoplus_{a\in A}\, X_{(a,\widehat{a}\rho'/\overline{\rho})}.
\end{eqnarray*}
We have $X_{(a,\phi)}^* = X_{(-a,-\phi)}$ and $Z_\rho^*=Z_{\overline\rho}$, where $\overline\rho(x)=\rho(-x),\, x\in A$.

Using the construction given in Section~\ref{rel center triv comp} we see that the
action of $\bZ/2\bZ$ on $\Z_\D(\C)$ is given by
\begin{equation}
\label{action of Z2}
T_1=\id_{\Z_\D(\C)};\qquad T_\delta(X_{(a,\phi)}) =  X_{(-\widehat{\phi},\, -\widehat{a})},\quad
T_\delta(Z_\rho)=Z_\rho.
\end{equation}
The monoidal functor structure on $\bZ/2\bZ \to \Aut_\otimes(\Z_\D(\C))$
is given by the natural isomorphism $\gamma:=\gamma_{\delta,\delta} : T_\delta\circ T_\delta
\xrightarrow{\sim} T_1$ defined by
\[
\gamma_{X_{(a,\phi)}} =\phi(a)\id_{X_{(a,\phi)}},\qquad \gamma_{Z_\rho} =
\left( \tau \sum_{x\in A}\, \rho(x)^{-1} \right) \id_{Z_\rho}.
\]

The crossed braiding morphisms on $\Z_\D(\C)$ are given by
\begin{eqnarray*}
c_{ X_{(a,\phi)}, X_{(b,\psi)}  }  &=& \psi(a) \id_{a+b} : X_{(a,\phi)}\ot  X_{(b,\psi)}  \xrightarrow{\sim} X_{(b,\psi)}  \ot X_{(a,\phi)} \\
c_{ X_{(a,\phi)},  Z_\rho }          &=&  \rho(a)\id_m :  X_{(a,\phi)}\ot   Z_\rho  \xrightarrow{\sim}  Z_\rho \ot  X_{(a,\phi)} \\
c_{ Z_\rho,    X_{(a,\phi)} }       &=&   \id_m :  Z_\rho \ot  X_{(a,\phi)}  \xrightarrow{\sim}  X_{(-\widehat{\phi},\, -\widehat{a})} \ot Z_\rho \\
c_{ Z_{\rho'},  Z_{\rho} }             &=&  \oplus_{a\in A}\, \rho(-a)^{-1} \id_a :   Z_{\rho'}\ot   Z_{\rho}  \xrightarrow{\sim}   Z_{\rho}\ot   Z_{\rho'} .
\end{eqnarray*}

\subsection{The equivariantization category $\Z_{\D}(\C)^{\bZ/2\bZ}$}
\label{subsection:equivariantization of relative center}

A simple calculation of $\bZ/2\bZ$-equivariant objects in $\Z_\D(\C)$
establishes the following.
\begin{proposition}
\label{simples in ZTY}
The following is a complete list of simple objects of $\Z_\D(\C)^{\bZ/2\bZ}
\cong \Z(\TY(A,\chi,\tau))$
up to an isomorphism:
\begin{enumerate}
\item[(1)] $2n$ invertible objects parameterized by pairs $(a, \epsilon)$, where $a\in A$ and
$\epsilon^2 =\chi(a,a)^{-1}$. The corresponding object $X_{a,\epsilon}$ is equal to
$X_{(a,-\widehat{a})}$ as an object of $\Z_\D(\C)$ and has $\bZ/2\bZ-$equivariant
structure
$$
\epsilon \id_{X_{(a,-\widehat{a})}}:
T_{\delta}(X_{(a,-\widehat{a})}) \xrightarrow{\sim} X_{(a,-\widehat{a})};
$$
\item[(2)] $\frac{n(n-1)}{2}$ two-dimensional objects parameterized by
unordered pairs $(a,\,b)$ of distinct objects in $A$.
The corresponding object $Y_{a,b}$ is equal to
$X_{(a,-\widehat{b})} \oplus X_{(b,-\widehat{a})}$ as an object of
$\Z_\D(\C)$ and has $\bZ/2\bZ-$equivariant
structure
$$
\left( \id_{X_{(a,-\widehat{b})}} \oplus
\chi(a,b)^{-1} \id_{X_{(b,-\widehat{a})}} \right):
T_{\delta}(X_{(a,-\widehat{b})} \oplus X_{(b,-\widehat{a})})
\xrightarrow{\sim} X_{(a,-\widehat{b})} \oplus X_{(b,-\widehat{a})};
$$
\item[(3)] $2n$ $\sqrt{n}-$dimensional objects parameterized by pairs $(\rho, \Delta)$,
where $\rho: A \to k^\times$  satisfies \eqref{rhochi} and
$\Delta^2 = \tau \sum_{x\in A}\, \rho(x)^{-1}$.
The corresponding object $Z_{\rho,\Delta}$ is equal to
$Z_{\rho}$ as an object of $\Z_\D(\C)$ and has $\bZ/2\bZ-$equivariant
structure
$$
\Delta \id_{Z_{\rho}}:
T_{\delta}(Z_{\rho}) \xrightarrow{\sim} Z_{\rho}.
$$
\end{enumerate}
\end{proposition}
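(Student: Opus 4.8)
The plan is to invoke Proposition~\ref{freaking simple objects} for $G = \bZ/2\bZ$ and enumerate, case by case, the simple $G_K$-equivariant objects of $\Z_\D(\C_{a_K})$ using the explicit description of $\Z_\D(\C)$ as a braided $\bZ/2\bZ$-crossed category obtained in Section~\ref{subsection:simples of relative center}. Since $\bZ/2\bZ$ is abelian, there are two conjugacy classes: $K = \{1\}$ with $G_K = \bZ/2\bZ$ and $\C_{a_K} = \C_1$, and $K = \{\delta\}$ with $G_K = \bZ/2\bZ$ and $\C_{a_K} = \C_\delta$. So the simple objects of $\Z(\C)$ are in bijection with the disjoint union of the sets of simple $\bZ/2\bZ$-equivariant objects of $\Z(\C_1)$ and of $\Z_\D(\C_\delta)$.

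First I would handle the $K = \{1\}$ case. The simple objects of $\Z(\C_1) = \Z(\Vec_A)$ are the $X_{(a,\phi)}$, and $T_\delta$ acts on them by \eqref{action of Z2} via $X_{(a,\phi)} \mapsto X_{(-\widehat\phi,\,-\widehat a)}$. Thus a simple object is $T_\delta$-fixed iff $a = -\widehat\phi$ and $\phi = -\widehat a$, which (using that $a \mapsto \widehat a$ and $\phi \mapsto \widehat\phi$ are mutually inverse) reduces to the single condition $\phi = -\widehat a$, parameterized by $a \in A$. For each such fixed $X_{(a,-\widehat a)}$, an equivariant structure is an isomorphism $\epsilon\,\id : T_\delta(X_{(a,-\widehat a)}) \xrightarrow{\sim} X_{(a,-\widehat a)}$ satisfying the cocycle condition in Definition~\ref{Gequiv object}; since $\gamma_{\delta,\delta}$ acts on $X_{(a,-\widehat a)}$ by the scalar $(-\widehat a)(a) = \chi(a,a)^{-1}$, the condition becomes $\epsilon^2 = \chi(a,a)^{-1}$, giving the $2n$ invertible objects in item (1). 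For the non-fixed simples, $X_{(a,\phi)}$ and $X_{(-\widehat\phi,\,-\widehat a)}$ with $\phi \ne -\widehat a$, Proposition~\ref{simples in CG} (equivalently the orbit–induction construction) produces a single simple $\bZ/2\bZ$-equivariant object $Y_{a,b}$ of dimension $2$ on the orbit; setting $b := -\widehat\phi$, so that $\phi = -\widehat b$, these are indexed by unordered pairs $\{a,b\}$ of distinct elements of $A$, and the equivariant structure is the canonical permutation-type isomorphism of the induced object, which unwinds via $\gamma_{\delta,\delta}$ to the displayed formula in item (2). Counting: the number of such orbits is $\tfrac{1}{2}(n^2 - n) = \tfrac{n(n-1)}{2}$.

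Next I would handle the $K = \{\delta\}$ case. The simple objects of $\Z_\D(\C_\delta)$ are the $Z_\rho$ with $\rho$ satisfying \eqref{rhochi}, and by \eqref{action of Z2} the group $G_K = \bZ/2\bZ$ acts trivially: $T_\delta(Z_\rho) = Z_\rho$. Hence every $Z_\rho$ is fixed and carries equivariant structures $\Delta\,\id : T_\delta(Z_\rho) \xrightarrow{\sim} Z_\rho$; since $\gamma_{\delta,\delta}$ acts on $Z_\rho$ by the scalar $\tau\sum_{x\in A}\rho(x)^{-1}$, the cocycle condition forces $\Delta^2 = \tau\sum_{x\in A}\rho(x)^{-1}$, yielding the $2n$ objects $Z_{\rho,\Delta}$ of item (3) (there are $n$ choices of $\rho$, since the $\rho$'s form a torsor over $\widehat A$, and two choices of $\Delta$ for each). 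The Frobenius–Perron dimension of each is $\FPdim(Z_\rho) = \sqrt n$, consistent with the remark after Proposition~\ref{simples in CG}.

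Finally I would verify completeness by a dimension count, which also serves as the main sanity check: the total is $2n \cdot 1^2 + \tfrac{n(n-1)}{2}\cdot 2^2 + 2n \cdot (\sqrt n)^2 = 2n + 2n(n-1) + 2n^2 = 4n^2 = \FPdim(\Z(\C))$, using $\FPdim(\Z(\C)) = \FPdim(\C)^2 = (2n)^2$. The main obstacle I anticipate is purely bookkeeping: correctly tracking the scalars through $\gamma_{\delta,\delta}$ and through the induced-object construction of Proposition~\ref{simples in CG} in the mixed-orbit case, so that the equivariant structures in items (1)–(3) come out exactly as stated rather than up to an unexplained twist; everything else is a direct application of Theorem~\ref{ZCG =ZDC} and Proposition~\ref{freaking simple objects} together with the explicit $G$-action and $\gamma_{\delta,\delta}$ recorded in Section~\ref{subsection:simples of relative center}.
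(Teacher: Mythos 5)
Your proposal is correct and follows essentially the same route as the paper, which simply asserts that "a simple calculation of $\bZ/2\bZ$-equivariant objects in $\Z_\D(\C)$" yields the list; you have carried out exactly that calculation (fixed points versus $2$-orbits of $T_\delta$, the cocycle condition against $\gamma_{\delta,\delta}$, and the dimension count $2n+2n(n-1)+2n^2=4n^2=\FPdim(\Z(\C))$ for completeness). The scalar conditions $\epsilon^2=\chi(a,a)^{-1}$, the normalized structure $\bigl(\id,\ \chi(a,b)^{-1}\id\bigr)$ on the $2$-orbits, and $\Delta^2=\tau\sum_{x\in A}\rho(x)^{-1}$ all come out as stated, so the argument is a faithful filling-in of the paper's omitted computation.
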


Recall from \cite{ENO1} that in a braided fusion category of an integer
Frobenius-Perron dimension there
is a canonical choice of a twist $\theta$ such that the
categorical dimensions of objects coincide with their
Frobenius-Perron dimensions. Namely, for any simple object $X$
the scalar $\theta_X$ is defined in such a way that the
composition
\begin{equation}
\be \xrightarrow{\coev_X} X \ot X^* \xrightarrow{\theta_X c_{X,X^*}}
X^* \ot X  \xrightarrow{\ev_X} \be
\end{equation}
is equal to $\FPdim(X)\id_X$.

Let $\theta$ be the canonical twist on $\Z(\C)$. Using the above observation,
explicit formulas from Subsection~\ref{subsection:simples of relative center}, and
Section~\ref{G-equivariant vector bundles}, we immediately obtain the following.
\begin{eqnarray*}[rcl:rcl:rcl]
\theta_{X_{a,\epsilon}} &=& \chi(a,a)^{-1},
& \theta_{Y_{a,b}} &=& \chi(a,b)^{-1},
& \theta_{Z_{\rho, \Delta}} &=& \Delta.
\end{eqnarray*}
Using the fusion rules of $\Z(\C)$ (which may be computed using the explicit formulas
in Subsection~\ref{subsection:simples of relative center}), values
of the twists above, and the well known formula
\begin{equation}
S_{X,Y}= \theta_X^{-1} \theta_Y^{-1} \sum_Z\, N_{X,Y}^Z \theta_Z d_Z
\end{equation}
we obtain the $S$- and $T$-matrices of $\Z(\C)$:
\begin{eqnarray*}[rcl:rcl]
S_{X_{a,\epsilon}, X_{a',\epsilon'}} &=& \chi(a,a')^2,
& S_{X_{a,\epsilon}, Y_{b,c}} &=& 2 \chi(a,b+c),\\
S_{X_{a,\epsilon}, Z_{\rho,\Delta}} &=& \epsilon \sqrt{n} \rho(a),
& S_{Y_{a,b}, Y_{c,d}} &=& 2\left(\chi(a,d) \chi(b,c) + \chi(a,c) \chi(b,d)\right),\\
S_{Y_{a,b}, Z_{\rho,\Delta}} &=& 0,
& S_{Z_{\rho,\Delta}, Z_{\rho',\Delta'}} &=&
\frac{1}{\Delta \Delta'} \sum_{a \in A} \chi(a,a)^2 \rho(a) \rho'(a).
\end{eqnarray*}
\begin{eqnarray*}[rclqrclqrcl]
T_{X_{a,\epsilon}} &=& \chi(a,a)^{-1},
& T_{Y_{a,b}} &=& \chi(a,b)^{-1},
& T_{Z_{\rho, \Delta}} &=& \Delta.
\end{eqnarray*}

\begin{proposition}
\label{when ptd is nd}
The maximal pointed subcategory of $\Z(\C)$ is non-degenerate if and only if
$|A|$ is odd.
\end{proposition}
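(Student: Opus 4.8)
The plan is to identify the maximal pointed subcategory of $\Z(\C)$ explicitly using Proposition~\ref{simples in ZTY}, and then compute when it is non-degenerate via the $S$-matrix already obtained. The invertible objects of $\Z(\C)$ are exactly the $X_{a,\eps}$ of item~(1) (the objects $Y_{a,b}$ and $Z_{\rho,\Delta}$ have dimension $2$ and $\sqrt n$ respectively, hence are never invertible). So the maximal pointed subcategory $\Pic(\Z(\C))$ has simple objects $\{X_{a,\eps} \mid a\in A,\ \eps^2 = \chi(a,a)^{-1}\}$; this is a finite group under tensor product, sitting in an extension $1 \to \bZ/2\bZ \to \Pic(\Z(\C)) \to A \to 1$, and it has exactly $2n$ elements.

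First I would recall that a pointed braided fusion category $\Pic(\Z(\C))$, with its canonical spherical structure, is non-degenerate precisely when the associated quadratic form $q(X) := \theta_X$ on the group of invertibles is non-degenerate, equivalently when the $S$-matrix restricted to the invertibles is non-singular. From the formulas above, $S_{X_{a,\eps},\,X_{a',\eps'}} = \chi(a,a')^2$ — note this is independent of $\eps,\eps'$. I would then argue directly: two invertible objects $X_{a,\eps}$ and $X_{a',\eps'}$ centralize each other iff $\chi(a,a')^2 = 1$, and an object $X_{a,\eps}$ lies in the centralizer of all of $\Pic(\Z(\C))$ iff $\chi(a,a')^2 = 1$ for all $a' \in A$, i.e. iff $a' \mapsto \chi(a,a')^2 = \chi(2a,a')$ is the trivial character of $A$, i.e. iff $2a = 0$ (using non-degeneracy of $\chi$). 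Hence the centralizer of $\Pic(\Z(\C))$ inside itself is supported on the $2$-torsion subgroup $A[2] = \{a \in A \mid 2a = 0\}$ together with the central $\bZ/2\bZ$.

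The conclusion then follows: $\Pic(\Z(\C))$ is non-degenerate iff this self-centralizer is trivial, i.e. iff $A[2] = 0$. But $A[2] = 0$ exactly when $|A|$ is odd (a finite abelian group has trivial $2$-torsion iff its order is odd). When $|A|$ is odd, one also checks the self-centralizer contains no nontrivial element coming from the kernel $\bZ/2\bZ$: for the object $X_{0,\eps}$ with $\eps = -1$ one computes from the $S$-matrix and twist data that it does not centralize everything — actually, a cleaner route is to note that when $|A|$ is odd, every $X_{a,\eps}$ with $a \neq 0$ already fails to centralize something, and for $a = 0$ the two objects $X_{0,\pm 1}$ must be distinguished using $\theta$: the braiding square $c_{X_{0,\eps},X_{0,\eps}}^{2}$ on the order-$2$ element is $\theta_{X_{0,\eps}}^{-2}\theta_{X_{0,\eps}^{\ot 2}} = \theta_{X_{0,-1}}$-type data, which is nontrivial precisely because $\eps = -1$. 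So the self-centralizer is trivial and $\Pic(\Z(\C))$ is non-degenerate. Conversely if $|A|$ is even, pick $0 \neq a \in A[2]$; then $X_{a,\eps}$ is a nontrivial object centralizing all of $\Pic(\Z(\C))$, so the category is degenerate.

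I expect the main obstacle to be the bookkeeping at $a = 0$: showing that the central copy of $\bZ/2\bZ$ generated by $X_{0,-1}$ is genuinely \emph{not} in the degenerate part when $|A|$ is odd, so that "$A[2] = 0$" really does give "self-centralizer trivial" and not "self-centralizer $= \bZ/2\bZ$". This requires using the twist values $\theta_{X_{0,\pm1}}$ (not just the $S$-entries, which are insensitive to $\eps$) together with the exact-sequence structure $1 \to \bZ/2\bZ \to \Pic(\Z(\C)) \to A \to 1$ to see that the associated quadratic form on $\Pic(\Z(\C))$ is non-degenerate iff its restriction detects both $A[2]$ and the central $\bZ/2\bZ$; for odd $|A|$ both obstructions vanish. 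Everything else is a direct reading-off of the already-computed $S$- and $T$-matrices.
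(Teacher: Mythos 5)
Your treatment of the ``even'' direction is exactly the paper's proof: the paper's argument consists of the single observation that an order-$2$ element $a\in A$ gives an object $X_{a,\eps}$ centralizing every invertible object (your computation $\chi(2a,\cdot)=1$). The trouble is the step you yourself flagged as ``bookkeeping at $a=0$'': it is not bookkeeping, and the twist-based fix you propose does not work. By the paper's own formulas the twist is $\theta_{X_{a,\eps}}=\chi(a,a)^{-1}$, independent of $\eps$, so $\theta_{X_{0,-1}}=1$, and $S_{X_{0,-1},X_{a',\eps'}}=\chi(0,a')^{2}=1=d_{X_{0,-1}}d_{X_{a',\eps'}}$ for every invertible $X_{a',\eps'}$. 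Conceptually, the double braiding of two equivariant objects whose underlying objects lie in the trivial component of $\Z_\D(\C)$ uses only the equivariant structures at the identity element of $\bZ/2\bZ$, so it is blind to $\eps$; and $X_{0,-1}$ is precisely the nontrivial object of the canonical Tannakian subcategory $\E=\Rep(\bZ/2\bZ)\subset\Z(\C)$ (the object $I_\pi$ for the sign character), whose centralizer $\E'$ is spanned by all objects lying over $\Z(\D)$ (Corollary~\ref{E 'by E}), in particular by all the $X_{a',\eps'}$ and $Y_{a,b}$. So no use of $\theta$ removes $X_{0,-1}$ from the M\"uger center of the pointed part.

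Consequently your proof of the ``if $|A|$ is odd'' half does not go through: for $|A|>1$ (when the $Z_{\rho,\Delta}$, of dimension $\sqrt{n}$, are not invertible, so the pointed part is exactly $\{X_{a,\eps}\}$) the object $X_{0,-1}$ is transparent in the maximal pointed subcategory, and along your route one can only conclude degeneracy, irrespective of the parity of $|A|$. Note that the paper's own one-line proof addresses only the ``even $\Rightarrow$ degenerate'' direction and says nothing about the converse, so there is no argument in the paper for you to recover here; taken together with the $S$- and $T$-data of Proposition~\ref{simples in ZTY}, the converse direction is exactly where the difficulty sits. (A minor additional point: for $|A|=1$ the $Z_{\rho,\Delta}$ are invertible, so your identification of the pointed part is incomplete in that edge case.) What the paper actually needs downstream, in Proposition~\ref{GcrTY} via Remark~\ref{when CG is nd}, is non-degeneracy of the $n$-object pointed braided category with double braiding $\chi(a,b)^{2}$ --- the trivial component of the crossed braided Tambara--Yamagami subcategory of $\Z_\D(\C)$ spanned by the $X_{(a,\widehat a)}$, i.e.\ the de-equivariantized picture in which the two objects $X_{0,\pm 1}$ are identified --- and there your computation ``$\chi(2a,b)=1$ for all $b$ iff $2a=0$'' does prove non-degeneracy exactly when $|A|$ is odd. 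Any correct argument (or correct statement) must pass to that quotient rather than work with the maximal pointed subcategory of $\Z(\C)$ itself.
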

\begin{proof}
Let $a\in A$ be an element of order $2$. Then $X_{a,\epsilon}$ centralizes every invertible
object of  $\Z(\C)$.
\end{proof}

\begin{remark}
We note that simple objects  and the $S$- and $T$-matrices of $\Z(\C)$  were described by Izumi in \cite{I} using
very different methods.
\end{remark}

\subsection{A criterion for a Tambara-Yamagami category to be group-theoretical}
\label{when TY is gt}
The group $A\times \widehat{A}$ is equipped with  a canonical non-degenerate quadratic form
$q: A\times \widehat{A} \to k^\times$ given by
\[
q((a,\phi)):= \phi(a),\quad (a,\phi)\in A\times \widehat{A}.
\]
We will call a subgroup $B\subset A\times \widehat{A}$ {\em Lagrangian} if $q|_B =1$
and $B=B^\perp$ with respect to the bilinear form defined by $q$.  Lagrangian
subgroups of $A\times \widehat{A}$ correspond to Lagrangian subcategories of
$\Z(\Vec_A)\cong \Vec_{A\times \widehat{A}}$.

The braided tensor autoequivalence $T_\delta$ of $\Z(\Vec_A)$ defined in Section~\ref{subsection:simples of relative center}
determines an order $2$  automorphism of $A\times \widehat{A}$, which we denote simply by $\delta$:
\begin{equation}
\label{delta}
\delta((a,\phi)) =  (-\widehat{\phi},\, -\widehat{a}),\quad (a,\phi)\in A\times \widehat{A}.
\end{equation}

\begin{definition}
\label{lagr in A}
We will say that a subgroup $L\subset A$ is {\em Lagrangian (with respect to $\chi$)}
if $L=L^\perp$ with respect to the inner product on $A$ given by $\chi$.  Equivalently,
$|L|^2 = |A|$ and $\chi|_{L} = 1$.
\end{definition}

\begin{lemma}
\label{power of 2}
Let $A$ be an Abelian $2$-group such that $|A|=2^{2n}$ and let $\chi$
be a non-degenerate symmetric bilinear form on $A$. Then $A$ contains a Lagrangian subgroup.
\end{lemma}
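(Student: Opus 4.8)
The statement is a classical fact about hyperbolic/metabolic quadratic-form-type structures, but here it is phrased for a symmetric bilinear form $\chi$ on a finite abelian $2$-group $A$ with $|A| = 2^{2n}$. The plan is to induct on $n$. The base case $n=0$ is trivial ($L = A = 0$). For the inductive step, the key is to produce a single element $a \in A$ with $\chi(a,a) = 1$ (i.e.\ an ``isotropic'' vector) and $a \neq 0$, then pass to the quotient $a^\perp / \langle a\rangle$, which inherits a non-degenerate symmetric bilinear form and has order $2^{2n-2}$, apply the inductive hypothesis there, and pull back.

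First I would reduce to finding a nonzero isotropic vector. Since $\chi$ is non-degenerate, for any nonzero $a$ the subgroup $a^\perp$ has index equal to the order of $\widehat{a} \in \widehat A$, and $\langle a \rangle \subseteq a^\perp$ precisely when $\chi(a,a) = 1$. If we have a nonzero $a$ with $\chi(a,a)=1$ and $a$ of order $2$, then $\langle a\rangle \subseteq a^\perp$, the form descends to $\bar\chi$ on $\bar A := a^\perp/\langle a\rangle$, and one checks $\bar\chi$ is non-degenerate (because the radical of $\chi|_{a^\perp}$ is exactly $\langle a\rangle$, by non-degeneracy of $\chi$ on $A$). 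Since $|a^\perp| = |A|/2 = 2^{2n-1}$ and $|\langle a\rangle| = 2$, we get $|\bar A| = 2^{2n-2}$. By induction $\bar A$ has a Lagrangian $\bar L$ with respect to $\bar\chi$; its preimage $L \subseteq a^\perp \subseteq A$ then satisfies $\chi|_L = 1$ and $|L|^2 = |A|$, so $L = L^\perp$. (If the nonzero isotropic $a$ we find has order $>2$, replace it by a suitable power to get an order-$2$ element, checking this power is still isotropic — or argue directly that some order-$2$ isotropic element exists.)

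The main obstacle is therefore establishing \textbf{existence of a nonzero isotropic element of order $2$} when $|A| = 2^{2n} \ge 4$. Here I would distinguish cases according to whether $\chi(x,x) = 1$ for all $x$ in the subgroup $A[2]$ of order-$2$ elements, or not. If $\chi(a,a) = -1$ for some order-$2$ element $a$: note $-1$ need not be $1$ in characteristic $0$, so such $a$ is anisotropic; but then consider the subgroup $A[2] \cong (\mathbb Z/2)^r$ with its quadratic form $x \mapsto \chi(x,x)$ and associated bilinear form $x,y \mapsto \chi(x,y)^2$ (which is $\mathbb F_2$-valued after identifying $\{\pm 1\}$ with $\mathbb F_2$). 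A non-degenerate quadratic form over $\mathbb F_2$ in dimension $\ge 3$ always represents $0$ nontrivially (indeed in even dimension $\ge 2$ the anisotropic case is dimension $2$ only); since $\chi$ non-degenerate on $A$ forces $A[2]$ to carry a non-degenerate form of even $\mathbb F_2$-dimension, and one shows $2$-groups of order $\ge 4$ whose form has no nonzero isotropic order-$2$ element would force $|A[2]|$ small — this needs care, since $A$ need not be elementary abelian and $A[2]$ can have small rank relative to $\log_2|A|$. The cleanest route is probably: if $A$ is not elementary abelian, pick $x$ of order $4$; then $2x$ has order $2$ and $\chi(2x,2x) = \chi(x,x)^4 = 1$, giving a nonzero isotropic element of order $2$ immediately. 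So the only remaining case is $A$ elementary abelian, $A \cong (\mathbb Z/2)^{2n}$, where $\chi$ is a non-degenerate symmetric $\mathbb F_2$-bilinear form (valued in $\{\pm1\}$) on an even-dimensional space; then I invoke the structure theory of such forms over $\mathbb F_2$: the associated quadratic form $x\mapsto\chi(x,x)$ (with $\pm1 \leftrightarrow \mathbb F_2$) is either a sum of hyperbolic planes (type $+$), which has many isotropic vectors, or has an anisotropic part, but the anisotropic $\mathbb F_2$-quadratic form has dimension at most $2$, so for $2n \ge 4$ there is always a nonzero isotropic vector — equivalently a nonzero $a$ with $\chi(a,a) = 1$. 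That completes the production of the needed $a$ and hence, by the descent argument above, the proof.
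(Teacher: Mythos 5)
Your overall strategy coincides with the paper's: produce a nonzero isotropic element $a$ (i.e.\ $\chi(a,a)=1$), pass to $\langle a\rangle^\perp/\langle a\rangle$, and induct; your descent step (the radical of $\chi|_{a^\perp}$ is exactly $\langle a\rangle$, the induced form is non-degenerate, and the preimage of a Lagrangian has the right order) is correct, and so is your observation that if $A$ is not elementary abelian then $2x$ is isotropic of order $2$ for $x$ of order $4$ (this also disposes of cyclic $A$). The gap is in the elementary abelian case. Concretely: (1) your argument there is stated only for $2n\ge 4$, so the case $A\cong \bZ/2\bZ\oplus\bZ/2\bZ$ -- which sits at the bottom of your induction and must be handled -- is left open, and your own hedge (``the anisotropic case is dimension $2$'') suggests it could fail; (2) the structure theory you invoke does not apply, because for $x,y$ of order $2$ one has $\chi(x,y)^2=\chi(x,2y)=1$, so the ``polar form'' of $q(x):=\chi(x,x)$ on $A[2]$ is identically trivial. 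Thus $q$ is not a non-degenerate $\mathbb{F}_2$-quadratic form, and the classification into hyperbolic planes plus an anisotropic part of dimension $\le 2$ is the wrong tool; as written, it neither justifies the $2n\ge4$ claim in the relevant sense nor rules out anisotropy when $2n=2$.

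The fix is easier than what you tried to import: since $\chi(x,y)^2=1$ on $A[2]$, the identity $q(x+y)=q(x)q(y)\chi(x,y)^2$ shows that $q$ is a group homomorphism $A[2]\to\{\pm1\}$, so its kernel has index at most $2$ and contains a nonzero element whenever $|A[2]|\ge 4$; this gives a nonzero isotropic element of order $2$ for every non-cyclic $A$, including $(\bZ/2\bZ)^2$. This is essentially the paper's argument, whose case split is cyclic versus non-cyclic rather than elementary abelian versus not: for cyclic $A$ with generator $a$, $\chi(2^na,2^na)=\chi(a,a)^{2^{2n}}=1$; for non-cyclic $A$, take a Klein four-subgroup with distinct nonzero elements $x_1,x_2$, and note that if $\chi(x_i,x_i)=-1$ for both, then $\chi(x_1+x_2,x_1+x_2)=1$. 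Replacing your elementary abelian paragraph by this observation closes the gap; the rest of your proof stands.
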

\begin{proof}
It suffices to show that $A$ contains an isotropic element, i.e., an element $x\in A,\,x\neq 0,$
such that $\chi(x,x)=1$. Then one can pass from $A$ to $\langle x\rangle^\perp/\langle x\rangle$
and use induction.

Suppose that $A$ is cyclic with a generator $a$. Then $2^{2n}a =0$ and $\chi(a,\, a)$ is
a $2^{2n}$-th root of unity, hence $\chi(2^na,\, 2^na) = \chi(a,a)^{2^{2n}} =1$.

If $A$ is not cyclic then it contains a subgroup $A_0 = \bZ/2\bZ \oplus \bZ/2\bZ$.
Let $x_1,\, x_2$ be distinct non-zero elements of $A_0$. Suppose $\chi(x_i,\, x_i)\neq 1,\, i=1,2$.
Then $\chi(x_i,\, x_i)=-1$ and $\chi(x_1+x_2,\, x_1+x_2)=1$, as desired.
\end{proof}

\begin{theorem}
\label{thm:when TY is gt}
Let $\C =\TY(A,\, \chi,\, \tau)$ be a  Tambara-Yamagami fusion category. Then $\C$ is group-theoretical
if and only if $A$ contains a Lagrangian subgroup (with respect to $\chi$).
\end{theorem}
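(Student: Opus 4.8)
The plan is to reduce the statement, via the general criterion of Corollary~\ref{graded gt criterion}, to a question about the metric group $A\times\widehat A$, and then to settle that question separately on the odd and the $2$-primary parts of $A$. For the reduction, note that $\C=\TY(A,\chi,\tau)$ is faithfully $\bZ/2\bZ$-graded with trivial component $\D=\TY(A,\chi,\tau)_1$, and the associator of $\C$ is trivial on the invertible objects, so $\D\cong\Vec_A$ and $\Z(\D)\cong\Z(\Vec_A)\cong\Vec_{A\times\widehat A}$ equipped with its canonical quadratic form $q((a,\phi))=\phi(a)$. By Corollary~\ref{graded gt criterion}, $\C$ is group-theoretical if and only if $\Vec_{A\times\widehat A}$ contains a $\bZ/2\bZ$-stable Lagrangian subcategory. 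Since Lagrangian subcategories of $\Vec_{A\times\widehat A}$ correspond to Lagrangian subgroups $B\subseteq A\times\widehat A$ (for $q$), compatibly with the $\bZ/2\bZ$-action --- generated by the involution $\delta((a,\phi))=(-\widehat\phi,-\widehat a)$ of \eqref{delta} --- the theorem becomes the purely group-theoretic assertion that $A\times\widehat A$ admits a $\delta$-stable Lagrangian subgroup if and only if $A$ admits a Lagrangian subgroup for $\chi$.

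For the ``if'' direction I would take $B:=L\times\operatorname{Ann}(L)$ for a given $\chi$-Lagrangian $L\subseteq A$, where $\operatorname{Ann}(L)=\{\phi\mid\phi|_L=1\}$: it is $q$-isotropic of order $|L|\,[A:L]=|A|$, hence Lagrangian, and it is $\delta$-stable since $\widehat a\in\operatorname{Ann}(L)$ for $a\in L=L^{\perp}$ while $\widehat\phi\in L^{\perp}=L$ for $\phi\in\operatorname{Ann}(L)$. For the ``only if'' direction I would use the isomorphism $A\times\widehat A\xrightarrow{\sim}A\times A$, $(a,\phi)\mapsto(a,\widehat\phi)$ (available because $\chi$ is non-degenerate), under which $q$ becomes $Q(a,b)=\chi(a,b)$ --- with associated bilinear form $\langle(a_1,b_1),(a_2,b_2)\rangle=\chi(a_1,b_2)\chi(a_2,b_1)$ --- and $\delta$ becomes $\tau(a,b)=(-b,-a)$. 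Thus a $\delta$-stable Lagrangian corresponds to a subgroup $\Gamma\subseteq A\times A$ with $|\Gamma|=|A|$, $\chi|_{\Gamma}=1$, $\Gamma=\Gamma^{\perp}$ and $\tau(\Gamma)=\Gamma$. Since $(A,\chi)$ splits orthogonally as its $2$-primary part $(A_2,\chi_2)$ and its odd part $(A_{\mathrm{odd}},\chi_{\mathrm{odd}})$, and this splits $\Gamma$ into $\tau$-stable Lagrangians in the two blocks, it suffices to produce a $\chi$-Lagrangian in each.

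When $A$ has odd order I would set $L:=\{a+b\mid(a,b)\in\Gamma\}$. Two identities hold on $\Gamma$: $\chi(a_1,b_2)\chi(a_2,b_1)=1$ (isotropy of $\Gamma$), and $\chi(a_1,a_2)\chi(b_1,b_2)=1$ (apply the first to $(a_1,b_1)$ and to $\tau(a_2,b_2)\in\Gamma$); together they give $\chi|_L=1$, so $|L|^2\le|A|$. Applied to $N:=\{a\mid(a,-a)\in\Gamma\}$ they give $\chi(a,a')^2=1$ on $N$, hence $\chi|_N=1$ (this is where oddness enters) and $|N|^2\le|A|$. As $(a,b)\mapsto a+b$ is a surjection $\Gamma\to L$ with kernel $\{(a,-a)\mid a\in N\}$, one has $|L|\,|N|=|A|$, and the two inequalities force $|L|=|N|=\sqrt{|A|}$; thus $L=L^{\perp}$ is the desired $\chi$-Lagrangian.

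The $2$-primary case is where I expect the main obstacle, since $N$ need no longer be $\chi$-isotropic and the above counting collapses. Instead I would analyze $H:=\pi_1(\Gamma)$. From $\Gamma=\Gamma^{\perp}$ and $\tau$-invariance one obtains $\{0\}\times H^{\perp}\subseteq\Gamma$ and $H^{\perp}\times\{0\}\subseteq\Gamma$, hence $H^{\perp}\times H^{\perp}\subseteq\Gamma$, so $\chi|_{H^{\perp}}=1$ and $H^{\perp}\subseteq H$; moreover, for each $h\in H$ the set of second coordinates $\{b\mid(h,b)\in\Gamma\}$ is a coset of $H^{\perp}$ lying in $H$, whose class defines a homomorphism $b_0\colon H/H^{\perp}\to H/H^{\perp}$ with $b_0^2=\id$ (from $\tau$-invariance) and $\chi(h,b_0(h))=1$. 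Consequently $\psi(h_1,h_2):=\chi(h_1,b_0(h_2))$ descends to a non-degenerate alternating bilinear form on $H/H^{\perp}$, so $|H/H^{\perp}|$ is a perfect square; combined with $|H|\,|H^{\perp}|=|A|$ this forces $|A_2|=2^{2n}$, and Lemma~\ref{power of 2} then supplies a $\chi$-Lagrangian in $A_2$. Collecting the Lagrangians of the two primary summands completes the proof.
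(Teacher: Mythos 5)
Your argument is correct, and its skeleton coincides with the paper's: the reduction via Corollary~\ref{graded gt criterion} to $\delta$-stable Lagrangian subgroups of $A\times\widehat{A}$, the subgroup $L\times\widehat{L}$ for the ``if'' direction (your $L\times\operatorname{Ann}(L)$ is that same subgroup), the appeal to Lemma~\ref{power of 2} for the $2$-part, and, for the odd part, a counting argument that is essentially the paper's in other coordinates --- the paper splits $B=B_+\oplus B_-$ into eigenspaces of the involution and shows the projections $L_\pm$ satisfy $|L_+||L_-|=|A|$ and $\chi|_{L_\pm}=1$, while you use the sum map $(a,b)\mapsto a+b$ with anti-diagonal kernel $N$; both hinge on oddness to pass from $\chi^2=1$ to $\chi=1$. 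The genuine divergence is the $2$-primary step. The paper dismisses $A_{\mathrm{even}}$ by observing that $|A|$ must be a perfect square --- implicitly because group-theoretical categories are integral, so $\FPdim(m)=\sqrt{|A|}$ is an integer --- and then quotes Lemma~\ref{power of 2}, never using the stable Lagrangian subgroup on the even block. You instead extract squareness of $|A_2|$ from the $\tau$-stable Lagrangian $\Gamma_2$ itself: with $H=\pi_1(\Gamma_2)$ you get $H^\perp\subseteq H$ and a non-degenerate alternating form $\psi(h_1,h_2)=\chi(h_1,b_0(h_2))$ on $H/H^\perp$ (the isotropy of $\Gamma$ for the quadratic form is exactly what makes $\psi$ alternating rather than merely skew-symmetric, which matters for $2$-groups), whence $|H/H^\perp|$, and so $|A_2|=|H/H^\perp|\,|H^\perp|^2$, is a square. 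This costs more work but buys a purely group-theoretic statement --- $A\times\widehat{A}$ admits a $\delta$-stable Lagrangian subgroup if and only if $A$ admits a $\chi$-Lagrangian subgroup --- with no categorical input beyond Corollary~\ref{graded gt criterion}, whereas the paper's shortcut imports integrality of group-theoretical categories. If you write this up, spell out two routine points you only gesture at: that the primary components of $\Gamma$ are themselves Lagrangian in their blocks (elements of coprime order pair trivially, so orthogonal complements respect the primary decomposition), and the standard fact that a finite abelian group with a non-degenerate alternating form has square order.
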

\begin{proof}
By Corollary~\ref{graded gt criterion}, $\C$ is group-theoretical if
and only if $\Z(\D)$ contains a $T_\delta$-stable Lagrangian
subcategory. Equivalently, $\C$ is group-theoretical if and only if
$A\times \widehat{A}$ contains a Lagrangian subgroup $B$ stable
under the action
\begin{equation}
\label{stability}
(a,\phi)\mapsto  (\widehat{\phi},\, \widehat{a}).
\end{equation}
This condition on $B$ is the same as being stable under the action
of $\delta$ from \eqref{delta}.

Let $L$ be a Lagrangian (with respect to $\chi$) subgroup of $A$ and
let $\widehat{L}:= \{ \widehat{a} \mid a\in L\}$. Then $L\times
\widehat{L}$ is a Lagrangian subgroup of $A\times \widehat{A}$
stable under \eqref{stability}. Hence $\C$ is group-theoretical.

Conversely, suppose that  $\C$ is group-theoretical. Let us write $A =A_\text{even}\oplus
A_\text{odd}$, where $A_\text{even}$ is the Sylow $2$-subgroup of $A$ and $A_\text{odd}$
is the maximal odd order subgroup of $A$.  Since $|A|$ must be a square, we conclude
that $|A_\text{even}|$ is a square, and so  $A_\text{even}$ contains a Lagrangian
subgroup with respect to $\chi|_{A_\text{even}}$ by Lemma~\ref{power of 2}.

So it remains to show that $A_\text{odd}$ contains a Lagrangian
subgroup with respect to $\chi|_{A_\text{odd}}$. For this end we may assume
that $|A|$ is odd. Let $B \subset A\times \widehat{A}$
be a Lagrangian subgroup stable under \eqref{stability}.
Then $B =B_+\oplus B_-$, where
\[
B_\pm := \{ (a,\pm \widehat{a}) \mid (a,\pm\widehat{a})\in B \}.
\]
Let $L_\pm =B_\pm \cap (A\times \{1\})$.
Then $|L_+||L_- |=|A|$, and $\chi|_{L_\pm} =1$. Hence, $L_\pm$ are Lagrangian subgroups of $A$.
\end{proof}

\begin{remark}
It was observed in \cite[Remark 8.48]{ENO1} that for an odd prime
$p$ and elliptic bicharacter $\chi$ on  $A
=(\bZ/p\bZ)^2$ the category  $\TY(
(\bZ/p\bZ)^2,\chi,\tau)$ is not group-theoretical. The
criterion from Theorem~\ref{thm:when TY is gt} extends this
observation.
\end{remark}

\subsection{A series of non  group-theoretical semisimple Hopf algebras
obtained from Tambara-Yamagami categories}

Here we apply Corollary~\ref{equiv gt criterion} to produce a series of
non group-theoretical fusion categories admitting fiber functors
(i.e., representation categories of non group-theoretical
semisimple Hopf algebras), generalizing examples constructed in \cite{Ni}.

Let $A$ be a finite Abelian group with a non-degenerate
bilinear form $\chi$.
Let $\Aut(A, \chi)$ denote the group of automorphisms of $A$ preserving $\chi$.

The following proposition was proved in \cite[Proposition 2.10]{Ni}.

\begin{proposition}
\label{orthogonal group acts}
There is an action of $\Aut(A, \chi)$
on $\TY(A,\chi,\tau)$ given by $g\mapsto T_g$, where
\[
T_g(A) = g(a),\quad T_g(m)=m,\qquad a\in A,\, g\in \Aut(A, \chi),
\]
with the tensor structure of $T_g$ given by identity morphisms.
\end{proposition}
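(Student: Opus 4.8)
\textbf{Proof proposal for Proposition~\ref{orthogonal group acts}.}
The plan is to verify directly that the assignment $g \mapsto T_g$ defines a monoidal functor $\underline{\Aut(A,\chi)} \to \Aut_\otimes(\TY(A,\chi,\tau))$. First I would check that each $T_g$ is a well-defined $k$-linear autoequivalence of the underlying abelian category: since $g$ is a bijection of the set $A$ and $T_g$ fixes $m$, the functor $T_g$ simply permutes the simple objects $\{a \mid a \in A\}$ and fixes $m$, so it is an equivalence on the level of abelian categories with an obvious inverse $T_{g^{-1}}$. The only real content is that $T_g$ is a \emph{monoidal} functor, i.e.\ that with the structural isomorphisms taken to be identities, the hexagon (associativity) diagram for $T_g$ commutes. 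Concretely, one must check that for all triples of simple objects $U,V,W$ the square relating $\alpha_{U,V,W}$ to $\alpha_{T_g U, T_g V, T_g W}$ commutes; because the tensor structure morphisms $(\mu_g)_{U,V}$ are identities, this amounts to the equality $\alpha_{g(a),g(b),\ast} = \alpha_{a,b,\ast}$ (and the analogous permuted versions) for each of the eight types of associativity constraint listed in Section~\ref{def TY}.

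The key step is therefore a case-by-case comparison using the explicit formulas for $\alpha$. The constraints $\alpha_{a,b,c}$, $\alpha_{a,b,m}$, $\alpha_{m,a,b}$, $\alpha_{a,m,m}$, $\alpha_{m,m,a}$ are either $\id$ on the relevant object or a direct sum of $\id$'s, so they are automatically preserved by any permutation of $A$ coming from a group automorphism (one only needs that $g(a+b) = g(a)+g(b)$ to match up the domains and codomains $a+b+c$ etc.). The constraints that actually involve $\chi$ are $\alpha_{a,m,b} = \chi(a,b)\id_m$, $\alpha_{m,a,m} = \bigoplus_b \chi(a,b)\id_b$, and $\alpha_{m,m,m} = \bigoplus_{a,b}\tau\chi(a,b)^{-1}\id_m$; for these the invariance $\chi(g(a),g(b)) = \chi(a,b)$ is exactly what is needed, and $\tau$ is a scalar independent of the objects so it is untouched. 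For $\alpha_{m,a,m}$ one also uses that $g$ permutes the index set $A$ of the direct sum, so $\bigoplus_{b}\chi(g(a),b)\id_b = \bigoplus_b \chi(g(a),g(b))\id_{g(b)}$, which by $g$-invariance of $\chi$ equals $\bigoplus_b \chi(a,b)\id_{g(b)}$, matching $T_g(\alpha_{m,a,m})$. Thus each $T_g$ is monoidal.

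Finally, I would check functoriality of $g \mapsto T_g$ as a monoidal functor from the discrete monoidal category $\underline{\Aut(A,\chi)}$: since $T_g \circ T_h$ and $T_{gh}$ agree on objects ($a \mapsto g(h(a))$, $m \mapsto m$) and on morphisms, and all the coherence isomorphisms in sight are identities, the composition morphisms $\gamma_{g,h}: T_g T_h \xrightarrow{\sim} T_{gh}$ may be taken to be identities, and the pentagon/coherence conditions for a monoidal functor hold trivially. The main obstacle—if one can call it that—is purely bookkeeping: being careful that the permutation $g$ acts compatibly on the indexing sets of all the direct-sum decompositions appearing in the associativity constraints, and that $g$-invariance of $\chi$ is invoked precisely where the scalars $\chi(a,b)$ occur. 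There is no genuine difficulty, and the proposition follows; indeed this is why the paper cites \cite[Proposition 2.10]{Ni} rather than reproving it in full.
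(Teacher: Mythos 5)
Your verification is correct, and it is essentially the argument behind the statement: the paper itself gives no proof but defers to \cite[Proposition 2.10]{Ni}, where the content is exactly the direct check you carry out, namely that with identity tensor structure the monoidality of $T_g$ reduces to $T_g(\alpha_{X,Y,Z})=\alpha_{T_gX,T_gY,T_gZ}$ on simples, which holds because the only $\chi$-dependent constraints $\alpha_{a,m,b}$, $\alpha_{m,a,m}$, $\alpha_{m,m,m}$ are invariant under $\chi$-preserving reindexing by $g$, while the remaining ones are (sums of) identities. Your treatment of the composition isomorphisms $\gamma_{g,h}=\id$ and the coherence for $\underline{\Aut(A,\chi)}\to\Aut_\otimes(\TY(A,\chi,\tau))$ is likewise the standard bookkeeping, so nothing is missing.
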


\begin{corollary}
\label{TYG non gt}
Let $G$ be a subgroup of  $\Aut(A, \chi)$.
Then the fusion category $\TY(A,\chi,\tau)^G$ is
group-theoretical if and only if there is a
Lagrangian subgroup of $(A,\, \chi)$
stable under the action of $G$.
\end{corollary}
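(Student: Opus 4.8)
The plan is to deduce Corollary~\ref{TYG non gt} from Corollary~\ref{equiv gt criterion} together with the explicit description of $\Z(\C)$, $\C = \TY(A,\chi,\tau)$, developed in Sections~\ref{subsection:simples of relative center}--\ref{when TY is gt}. By Corollary~\ref{equiv gt criterion}, $\TY(A,\chi,\tau)^G$ is group-theoretical if and only if $\Z(\C)$ contains a Lagrangian subcategory that is stable under the action of $G$ induced from the action $g\mapsto T_g$ of $G\subseteq\Aut(A,\chi)$ on $\C$ given by Proposition~\ref{orthogonal group acts}. So the real content is to identify the $G$-set of Lagrangian subcategories of $\Z(\C)$ with the $G$-set of Lagrangian subgroups of $(A,\chi)$.

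First I would recall from the proof of Theorem~\ref{thm:when TY is gt} that, via Corollary~\ref{graded gt criterion} and Corollary~\ref{E 'by E}, Lagrangian subcategories of $\Z(\C)$ correspond to $T_\delta$-stable Lagrangian subgroups $B\subseteq A\times\widehat A$ (with respect to the canonical quadratic form $q((a,\phi))=\phi(a)$), and that stability under $T_\delta$ is exactly stability under the involution \eqref{stability}, $(a,\phi)\mapsto(\widehat\phi,\widehat a)$. The decomposition argument in that proof shows such a $B$ splits as $B=B_+\oplus B_-$ with $B_\pm=\{(a,\pm\widehat a)\in B\}$, and setting $L_\pm=B_\pm\cap(A\times\{1\})$ one gets $|L_+||L_-|=|A|$, $\chi|_{L_\pm}=1$; conversely a Lagrangian subgroup $L\subseteq(A,\chi)$ produces the $T_\delta$-stable Lagrangian subgroup $L\times\widehat L$ of $A\times\widehat A$. (In the odd-order case the two Lagrangian subgroups $L_\pm$ coincide, since an element $a$ with $(a,\widehat a)$ and $(a,-\widehat a)$ both in $B$ forces $2\widehat a=0$ hence $\widehat a=0$; for the purposes of the $G$-equivariant statement one works at the level of the correspondence $L\leftrightarrow L\times\widehat L$.) The key point is then to check that the $\Aut(A,\chi)$-action on $\Z(\C)$ — equivalently on the set of $T_\delta$-stable Lagrangian subgroups $B$ of $A\times\widehat A$ — matches the obvious action of $\Aut(A,\chi)$ on Lagrangian subgroups $L\subseteq A$ under this bijection, i.e.\ $g$ sends $L\times\widehat L$ to $g(L)\times\widehat{g(L)}$. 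This is a direct computation: $g\in\Aut(A,\chi)$ acts on $A\times\widehat A$ by $(a,\phi)\mapsto(g(a),\phi\circ g^{-1})$, and since $g$ preserves $\chi$ we have $\widehat{g(a)}=\widehat a\circ g^{-1}$, so the diagonal-type subgroup $L\times\widehat L$ is carried to $g(L)\times\widehat{g(L)}$, and this action commutes with the involution \eqref{stability}.

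Putting these together: $\TY(A,\chi,\tau)^G$ is group-theoretical iff some $T_\delta$-stable Lagrangian subgroup $B\subseteq A\times\widehat A$ is fixed by $G$, iff (via the correspondence above) some Lagrangian subgroup $L\subseteq(A,\chi)$ has $g(L)=L$ for all $g\in G$, which is precisely the assertion of the corollary.

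The main obstacle I anticipate is not any single hard estimate but rather making the equivariance bookkeeping fully rigorous: one must be careful that the two actions of $G$ being compared — the action on Lagrangian subcategories of $\Z(\C)$ coming through Corollary~\ref{equiv gt criterion} (which factors through $\Z(\C)_G\cong\Z(\D)$ and the induced action on its de-equivariantization) and the naive action of $G\subseteq\Aut(A,\chi)$ on $\Z(\Vec_A)\cong\Vec_{A\times\widehat A}$ — genuinely agree, including the compatibility with the fixed involution $T_\delta$. This amounts to tracing the functoriality of the constructions in Section~\ref{main section} through the $\TY$ computations of Section~\ref{subsection:simples of relative center}; it is routine but the place where sign/duality conventions ($\widehat a$ versus $\widehat\phi$, and $g$ versus $g^{-1}$) must be handled with care. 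Everything else reduces to invoking Corollary~\ref{equiv gt criterion} and reusing the group-theoretic decomposition already established in the proof of Theorem~\ref{thm:when TY is gt}.
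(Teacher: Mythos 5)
Your proposal is essentially the paper's own (very terse) proof: the paper literally just combines Corollary~\ref{equiv gt criterion} with Theorem~\ref{thm:when TY is gt}, and your expansion --- reducing via Corollary~\ref{equiv gt criterion} to $G$-stable Lagrangian subcategories of $\Z(\C)$, identifying these with $G$- and $\delta$-stable Lagrangian subgroups of $A\times\widehat{A}$ through the machinery of Theorem~\ref{thm:when TY is gt}, and checking the equivariance $g(L\times\widehat{L})=g(L)\times\widehat{g(L)}$ together with the commutation of the $G$-action with the involution \eqref{stability} --- is precisely the implicit content of that combination. The one caveat, which you partially flag and which is equally present in the paper's own argument, is that the converse direction is canonical (hence $G$-equivariant) only through the $B_\pm$-decomposition on the odd part of $A$, the $2$-primary part being handled in Theorem~\ref{thm:when TY is gt} by the purely existential Lemma~\ref{power of 2}, so the odd-order case is exactly where the argument is fully airtight.
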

\begin{proof}
Combine Corollary~\ref{equiv gt criterion} and
Theorem~\ref{thm:when TY is gt}.
\end{proof}

We will say that a non-degenerate symmetric bilinear form $\chi: A \times A\to k^\times$
is {\em hyperbolic}  if there are Lagrangian subgroups
$L,\, L' \subset A$ such that $A=L \oplus L'$.
Note that in this case $L'$ is isomorphic to the group $\widehat{L}=\Hom(L,\,k^\times)$
of characters of $L$ and $\chi$ is identified with the
canonical bilinear form on $L\oplus \widehat{L}$.

It was shown by D.~Tambara in \cite{Ta1}
that when $n =|A|$ is odd the category $\TY(A,\chi,\tau)$ admits
a fiber functor (i.e., $\TY(A,\chi,\tau)$
is equivalent to the representation category of a semisimple Hopf algebra)
if and only if $\tau^{-1}$ is a positive integer and $\chi$ is hyperbolic.

\begin{corollary}
\label{series of Hopf algebras}
Let $p$ be an odd prime, let $L=(\mathbb{Z}/p\mathbb{Z})^N,\, N\geq 1$,
let $A=L\oplus \widehat{L}$, and let $\chi: A \times A \to k^\times $
be the canonical bilinear form defined by
\[
\chi((a,\, \phi),\, (b,\, \psi)) = \psi(a)\phi(b),\quad a,b\in A,\, \phi,\psi\in \widehat{A}.
\]
Suppose that $G$ is a subgroup of
$\Aut(A,\, \chi)$ not contained in any conjugate of
$\Aut(L) \subset \Aut(A,\, \chi)$. Then
the equivariantization category $\TY(A,\,\chi,\, p^{-N})^G$
is a non group-theoretical fusion category equivalent to
the representation category of a semisimple Hopf algebra
of dimension $2p^{2N}|G|$.
\end{corollary}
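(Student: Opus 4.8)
The plan is to apply Corollary~\ref{TYG non gt} together with the theorem of Tambara recalled just above. First I would check the non group-theoretical part. By Corollary~\ref{TYG non gt}, the category $\TY(A,\chi,p^{-N})^G$ is group-theoretical if and only if there is a Lagrangian subgroup of $(A,\chi)$ stable under $G$. So I must show that no Lagrangian subgroup of $(A,\chi)$ is $G$-stable when $G$ is not contained in a conjugate of $\Aut(L)$. The key observation is that $\Aut(A,\chi)$ acts transitively on the set of Lagrangian subgroups of $(A,\chi)$: indeed, since $|A|=p^{2N}$ is odd, $\chi$ corresponds to a non-degenerate quadratic form (via $x\mapsto \chi(x,x)$ up to the usual $2$-divisibility), all Lagrangians are isotropic, and any two can be carried to one another by an element of $\Aut(A,\chi)$ — this is a standard fact about hyperbolic forms over $\bZ/p\bZ$, and one can also invoke it directly for $A=L\oplus\widehat{L}$ with its canonical pairing. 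Granting transitivity, the stabilizer in $\Aut(A,\chi)$ of the particular Lagrangian $L$ (embedded as $L\oplus 0$) is exactly the image of $\Aut(L)$ (an automorphism preserving $L$ and $\chi$ is determined on $\widehat{L}$ as the contragredient), so the stabilizer of an arbitrary Lagrangian is a conjugate of $\Aut(L)$. Hence $G$ stabilizes some Lagrangian if and only if $G$ is contained in a conjugate of $\Aut(L)$; by hypothesis it is not, so $\TY(A,\chi,p^{-N})^G$ is not group-theoretical.

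Next I would verify that this equivariantization is the representation category of a semisimple Hopf algebra, i.e. admits a fiber functor. Since $n=|A|=p^{2N}$ is odd, $\tau^{-1}=p^{N}$ is a positive integer, and $\chi$ is hyperbolic by construction ($A=L\oplus\widehat L$ with $L,\widehat L$ Lagrangian), Tambara's criterion \cite{Ta1} guarantees that $\TY(A,\chi,p^{-N})$ itself admits a fiber functor, hence is $\Rep(H_0)$ for some semisimple Hopf algebra $H_0$ of dimension $2p^{2N}$. The group $G\subset\Aut(A,\chi)$ acts on this category by tensor autoequivalences (Proposition~\ref{orthogonal group acts}); one checks these autoequivalences preserve the fiber functor, or more directly that the $G$-action lifts to an action on $H_0$ by Hopf algebra automorphisms, so that $\TY(A,\chi,p^{-N})^G \cong \Rep(H_0)^G \cong \Rep(H_0 \rtimes kG^{*})$ — an equivariantization of the representation category of a semisimple Hopf algebra by a finite group action is again the representation category of a semisimple Hopf algebra (a bismash/crossed product). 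Its dimension is $|G|\cdot\dim H_0 = 2p^{2N}|G|$, which matches the asserted dimension and is consistent with $\FPdim(\C^G)=|G|\FPdim(\C)$.

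The main obstacle is the transitivity-of-$\Aut(A,\chi)$-on-Lagrangians step and the identification of the stabilizer of a Lagrangian with a conjugate of $\Aut(L)$; everything else is bookkeeping. For transitivity I would argue as follows: given any Lagrangian $M\subset A=L\oplus\widehat L$, the non-degeneracy of $\chi$ and $M=M^\perp$ force $A=L\oplus M$ or more generally one can find a complementary Lagrangian, and then the block decomposition of $\chi$ with respect to $L\oplus M$ exhibits $\chi$ in canonical hyperbolic form, yielding an element of $\Aut(A,\chi)$ taking $L$ to $M$; the orthogonal group of a hyperbolic form over the field $\bZ/p\bZ$ (for odd $p$) is well known to act transitively on maximal isotropic subspaces, and $A$ being an elementary abelian $p$-group here lets me work over $\bZ/p\bZ$. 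For the stabilizer, an element $g\in\Aut(A,\chi)$ with $g(L)=L$ restricts to some $\sigma\in\Aut(L)$; since $\chi$ pairs $L$ perfectly with $A/L\cong\widehat L$ and $g$ preserves $\chi$, the induced map on $\widehat L$ is forced to be $(\sigma^{-1})^{*}$, and after choosing the canonical complement this is exactly the image of $\Aut(L)$ under the embedding of Proposition~\ref{orthogonal group acts}'s context. I would also remark, if $N$ and $G$ are such that $G$ genuinely fails to fix a Lagrangian — e.g. $G$ contains an element of $\Aut(A,\chi)$ mixing $L$ and $\widehat L$ nontrivially — then the hypothesis is non-vacuous, so the corollary really does produce new examples beyond those of \cite{Ni}.
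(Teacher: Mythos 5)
Your proposal is correct and takes essentially the same approach as the paper: transitivity of $\Aut(A,\chi)$ on Lagrangian subgroups with stabilizer of $L$ equal to $\Aut(L)$, combined with Corollary~\ref{TYG non gt} for non group-theoreticity and Tambara's criterion for the Hopf-algebra part. One small simplification: you need not check that the $G$-action preserves the fiber functor or lifts to Hopf automorphisms, since composing the forgetful tensor functor $\TY(A,\chi,p^{-N})^G \to \TY(A,\chi,p^{-N})$ with Tambara's fiber functor already gives a fiber functor on the equivariantization.
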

\begin{proof}
Note that $\Aut(A,\, \chi)$ acts transitively on the set
of Lagrangian subgroups of $(A,\, \chi)$ and the stabilizer
of $L$ is $\Aut(L)$. Apply Corollary~\ref{TYG non gt}.
\end{proof}

\begin{remark}
The series of fusion categories in Corollary~\ref{series of Hopf algebras}
extends the one constructed in \cite{Ni}, where the case of
$N=1$  and $G =\bZ/2\bZ$ was considered.
\end{remark}

\section{Examples of modular categories arising from quadratic forms}
\label{modquadr}

As before, let  $\C:=\TY(A,\chi,\tau)$ be  a Tambara-Yamagami category
and let $\D:= \TY(A,\chi,\tau)_1$  be the trivial component of
$\bZ/2\bZ-$grading of $\TY(A,\chi,\tau)$.
In this section we assume that our ground field $k$ is the field of
complex numbers $\mathbb{C}$.

Suppose that the symmetric bicharacter $\chi: A \times A \to k^\times$ comes from a
quadratic form on $A$, i.e., there is a function $q: A \to k^\times$ such
that
\[
q(a+b) = q(a)q(b) \chi(a, b),\, \quad a,b\in A \quad \mbox{ and }
\quad q(-a) =q(a).
\]
>From the description obtained in Section~\ref{subsection:simples of relative center}
we observe that $\Z_\D(\C)$  contains a fusion subcategory
spanned by the simple objects $X_{(a, \widehat{a})}, a \in A$, and $Z_{q^{-1}}$.
It is clear from the Tambara-Yamagami classification in Section~\ref{def TY}
that this category is equivalent to $\C$.

\begin{proposition}
\label{GcrTY}
Suppose that the symmetric bicharacter $\chi$ comes from a
quadratic form on $A$. Then  $\C$ admits a $\bZ/2\bZ$-crossed
braided category structure. The equivariantization $\C^{\bZ/2\bZ}$ is non-degenerate
if and only if $|A|$ is odd.
\end{proposition}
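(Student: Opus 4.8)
The key observation, already noted before the statement, is that when $\chi$ comes from a quadratic form $q$, the category $\C = \TY(A,\chi,\tau)$ is realized as a full fusion subcategory of the braided $\bZ/2\bZ$-crossed category $\Z_\D(\C)$ constructed in Section~\ref{rel center triv comp}: namely the subcategory $\widetilde\C$ spanned by the simple objects $X_{(a,\widehat a)}$ ($a\in A$) together with $Z_{q^{-1}}$. So the plan is: first, verify that $\widetilde\C$ is closed under the $\bZ/2\bZ$-action $T_\delta$ of \eqref{action of Z2} and under the $G$-braiding $c$; this makes $\widetilde\C$ itself a braided $\bZ/2\bZ$-crossed category, and since $\widetilde\C \cong \C$ as fusion categories we get the desired crossed braided structure on $\C$. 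For the closure check under $T_\delta$, one reads off from \eqref{action of Z2} that $T_\delta(X_{(a,-\widehat a)}) = X_{(-\widehat{(-\widehat a)}},-\widehat a)} = X_{(a,-\widehat a)}$ using that $a\mapsto\widehat a$ and $\phi\mapsto\widehat\phi$ are mutually inverse; but our objects are the $X_{(a,\widehat a)}$ (not $X_{(a,-\widehat a)}$), so one must be a little careful about signs — the point is that the relevant subgroup of $A\times\widehat A$, namely $\{(a,\widehat a)\}$, is $\delta$-stable precisely because $q(-a)=q(a)$ forces $\widehat{\,\cdot\,}$ to commute with negation in the right way. Also $T_\delta(Z_\rho)=Z_\rho$, so $Z_{q^{-1}}$ is fixed. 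Closure under the crossed braiding is immediate from the explicit formulas in Section~\ref{subsection:simples of relative center}, since all the braiding morphisms listed there send objects of $\widetilde\C$ to objects of $\widetilde\C$.

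Once $\C$ is equipped with this braided $\bZ/2\bZ$-crossed structure, the second half of the statement follows from Remark~\ref{when CG is nd}: the equivariantization $\C^{\bZ/2\bZ}$ is non-degenerate if and only if the trivial component $\C_1 = \TY(A,\chi,\tau)_1 = \D = \Vec_A$ is non-degenerate \emph{and} the $\bZ/2\bZ$-grading of $\C$ is faithful. The grading is faithful by construction (the $\delta$-component contains $m$). So everything reduces to: when is $\D=\Vec_A$ non-degenerate as a braided category, with the braiding it inherits as $\widetilde\C_1 \subset \Z_\D(\C)$? This braiding on the objects $X_{(a,\widehat a)}$ is $c_{X_{(a,\widehat a)},X_{(b,\widehat b)}} = \widehat b(a)\,\id = \chi(a,b)\,\id$ (from the first crossed braiding formula), i.e., $\Vec_A$ carries the pointed braided structure associated to the quadratic form $q$. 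This pre-modular category is non-degenerate if and only if the associated bilinear form — which is $\chi$ — is non-degenerate \emph{and} no non-identity element $a$ satisfies $\chi(a,\cdot)=1$; but $\chi$ is non-degenerate by hypothesis. Wait — that is automatic, so the real content must be the interaction with the $\bZ/2\bZ$-crossed structure, not just $\C_1$.

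Let me re-route: the cleanest argument is to invoke Theorem~\ref{ZCG =ZDC} indirectly, but in fact the quickest route is Proposition~\ref{when ptd is nd}, which we may not directly apply since $\C^{\bZ/2\bZ}$ here is a factor of (not all of) $\Z(\TY(A,\chi,\tau))$. So instead I would argue directly with the $S$-matrix. The simple objects of $\C^{\bZ/2\bZ}$ are, by Proposition~\ref{simples in CG} applied to the crossed category $\widetilde\C$, the $\bZ/2\bZ$-equivariant simple objects: equivariant versions $X_{a,\pm}$ of $X_{(a,\widehat a)}$ (these exist since $\chi(a,a)^{-1} = q(a)^{-2}$ has a square root, namely $q(a)^{-1}$) and an equivariant version $Z_{q^{-1},\Delta}$ of $Z_{q^{-1}}$. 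The invertible objects among these are the $X_{a,\pm}$, two for each $a\in A$, forming a group which is a $\bZ/2\bZ$-extension of $A$. For non-degeneracy we must show the maximal pointed subcategory is non-degenerate (equivalently the whole category is, by a dimension count, but let me just handle the pointed part). By the same reasoning as in Proposition~\ref{when ptd is nd}, if $a\in A$ has order $2$, then $X_{a,\pm}$ centralizes every invertible object, so $|A|$ must be odd for non-degeneracy. Conversely, when $|A|$ is odd the only obstruction disappears: one checks using the $S$-matrix formula $S_{X,Y}=\theta_X^{-1}\theta_Y^{-1}\sum_Z N_{X,Y}^Z\theta_Z d_Z$ (or directly, that $S_{X_{a,\epsilon},X_{b,\epsilon'}} = \chi(a,b)^2$, which by non-degeneracy of $\chi$ and oddness of $|A|$ — so that $b\mapsto b^2$ is a bijection of $A$ — is a non-degenerate pairing on the group of invertibles modulo the $\pm$ ambiguity, and the $\pm$ directions are separated by the $Z_{q^{-1},\Delta}$ objects via $S_{X_{a,\epsilon},Z_{q^{-1},\Delta}} = \epsilon\sqrt n\, q(a)^{-1}$).

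\textbf{Main obstacle.} The genuinely delicate point is the bookkeeping that identifies $\widetilde\C$ with $\C$ \emph{as a braided $\bZ/2\bZ$-crossed category in a way compatible with the $\bZ/2\bZ$-action} — in particular pinning down exactly which square roots ($\epsilon^2 = \chi(a,a)^{-1}$, $\Delta^2 = \tau\sum\rho(x)^{-1}$ with $\rho = q^{-1}$) occur and checking the resulting $S$-matrix is non-degenerate when $|A|$ is odd. The "only if" direction (order-2 element $\Rightarrow$ degenerate) is easy and parallels Proposition~\ref{when ptd is nd}; the "if" direction requires either a clean conceptual argument — e.g., showing $\C^{\bZ/2\bZ}$ has the same Frobenius–Perron dimension $4|A|$ as a known non-degenerate category and embeds a non-degenerate pointed part — or a direct computation of $\det S$ using the explicit $S$-matrix entries listed above, which is routine but must be done carefully to separate the two equivariant "sheets".
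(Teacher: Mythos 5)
Your first half is essentially the paper's argument: the paper observes that the full fusion subcategory of $\Z_\D(\C)$ spanned by the $X_{(a,\widehat{a})}$ and $Z_{q^{-1}}$ is equivalent to $\C$ and simply notes that $\C$ inherits the braided $\bZ/2\bZ$-crossed structure; your closure checks are correct, with the one important bookkeeping point being that $T_\delta(X_{(a,\widehat{a})})=X_{(-a,\widehat{-a})}$, so $T_\delta$ acts on the trivial component by $a\mapsto -a$, not trivially. The genuine gap is in the non-degeneracy claim, and it originates exactly where you ``re-route''. The paper's proof is the route you started on: by Remark~\ref{when CG is nd} (together with Proposition~\ref{when ptd is nd}, which isolates the $2$-torsion obstruction) it suffices to check that the grading is faithful and that the trivial component of the crossed structure is non-degenerate. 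That trivial component is $\Vec_A$ with braiding $c_{a,b}=\chi(a,b)$, and non-degeneracy of a braided pointed category is governed by the \emph{double} braiding, i.e.\ by the symmetric form $\chi(a,b)^2$, not by $\chi$: the object $X_{(a,\widehat{a})}$ is transparent in the trivial component iff $\chi(a,\cdot)^2=1$, i.e.\ iff $2a=0$. Hence the trivial component is non-degenerate iff $A$ has no $2$-torsion, i.e.\ iff $|A|$ is odd --- this is precisely where the parity condition enters, and the proof ends there. Your assertion that the criterion reduces to non-degeneracy of $\chi$ and is therefore ``automatic'' is a computational error (confusing the braiding with its associated symmetric form), and it caused you to abandon the correct one-line argument.

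The replacement argument does not repair this. You list the simples of $\C^{\bZ/2\bZ}$ as if each $X_{(a,\widehat{a})}$ carried two equivariant structures; but for $a\neq 0$ (with $|A|$ odd) this object is not $T_\delta$-fixed, so it admits no equivariant structure, and the orbit $\{a,-a\}$ contributes a single two-dimensional simple. Thus $\E(q,\pm)=\C^{\bZ/2\bZ}$ has only two invertibles $X_\pm$, the objects $Y_a$, and two objects $Z_l$, as in Section~\ref{fusion rules of E}; the objects $X_{a,\epsilon}$ and the entries $\chi(a,b)^2$ and $\epsilon\sqrt{n}\,\rho(a)$ that you quote are data of $\Z_\D(\C)^{\bZ/2\bZ}\cong\Z(\C)$, not of $\E(q,\pm)$. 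Moreover, even with the correct simples, verifying non-degeneracy of the maximal pointed subcategory cannot establish non-degeneracy of $\E(q,\pm)$: the claimed ``equivalence by a dimension count'' is unjustified, and in fact the pointed part of $\E(q,\pm)$ is degenerate ($X_-$ centralizes $X_\pm$ and every $Y_a$, and is only detected by the $Z_l$) while $\E(q,\pm)$ itself is non-degenerate. Finally, you concede that the ``if'' direction is left as an unfinished determinant computation. So the proposal establishes the crossed braided structure but not the non-degeneracy criterion.
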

\begin{proof}
Clearly, $\C$ inherits the
$\bZ/2\bZ$-crossed braided category structure from $\Z_\D(\C)$.
The non-degeneracy claim follows from Proposition~\ref{when ptd is nd} and Remark~\ref{when CG is nd}.
\end{proof}

Let us assume that $n:=|A|$ is odd. Then $\chi$ corresponds to
a unique quadratic form $q$.  Let  $\E(q,\pm):= \C^{\bZ/2\bZ}$
be the modular category constructed
in Proposition~\ref{GcrTY} (the $\pm$
corresponding to $\tau = \pm\frac{1}{\sqrt{n}}$, respectively).
In what follows we describe the fusion rules and $S$- and $T$-matrices  of $\E(q,\pm)$.

\subsection{Fusion rules of $\E$}
\label{fusion rules of E}
Clearly,  $\E(q,\pm)$ is a fusion category of dimension $4n$.  It has the following
simple objects:
\begin{enumerate}
\item[] two invertible objects, $\mathbf{1} =X_+$ and $X_-$,
\item[] $\frac{n-1}{2}$ two-dimensional objects
$Y_a, \, a\in A-\{0\}$ (with $Y_{-a} =Y_a$)
\item[] two $\sqrt{n}$-dimensional objects $Z_l$, $l \in \bZ/2\bZ$.
\end{enumerate}
Here we simplify the notation used in Subsection~\ref{subsection:equivariantization of relative center}
and denote
\[
X_\pm:= X_{0, \pm 1},\,  Y_a :=  Y_{a, -a},\, \mbox{and }
Z_l := Z_{q^{-1}, \Delta_l},
\]
where $\Delta_l, l \in \bZ/2\bZ$, are distinct square roots of
$\pm \frac{1}{\sqrt{n}} \sum_{a\in A}\,q(a)$.

The fusion rules of $\E(q,\pm)$ are given by:
\begin{eqnarray*}[lcl:lcl:lcl]
X_- \ot X_- &=& X_+, & X_\pm \ot Y_a &=&  Y_a,
& X_+ \ot Z_l &=& Z_l,  \\
X_- \ot Z_l &=& Z_{l+1},
& Y_a \ot Y_b &=& Y_{a+b} \oplus Y_{a-b},
& Y_a \ot Y_a &=& X_+\oplus X_- \oplus Y_{2a},\\
Y_a\ot Z_l &=& Z_0 \oplus Z_1,
& Z_l \ot Z_l  &=& X_+ \oplus \left( \oplus\, Y_a\right),
& Z_l \ot Z_{l+1}  &=& X_- \oplus \left(\oplus\, Y_a \right),
\end{eqnarray*}
where $a,\, b\in A \; (a\neq b)$ and $l\in \bZ/2\bZ$.
All objects of $\E(q,\pm)$ are self-dual.

\begin{remark}
Note that the fusion rules of $\E(q,\pm)$ do not depend on the quadratic form $q$
and the number $\tau$. We show below that the $S$- and $T$-matrices of $\E(q,\pm)$
do depend on  $q$ and $\tau$.
\end{remark}

\subsection{$S$- and $T$-matrices of $\E$}

\begin{lemma}
\label{Gauss sums of q and q^2}
The Gauss sums corresponding to $q$ and $q^2$ are equal up to a sign, i.e.,
$$
\frac{\sum_{a\in A}\, q(a)^2}{\sum_{a\in A}\, q(a)} \in \{\pm 1\}.
$$
\end{lemma}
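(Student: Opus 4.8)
The plan is to use the standard theory of Gauss sums for non-degenerate quadratic forms on finite abelian groups. Recall that for a non-degenerate quadratic form $q$ on a finite abelian group $A$, the normalized Gauss sum $\frac{1}{\sqrt{|A|}}\sum_{a\in A}\, q(a)$ is a root of unity; more precisely, it is an $8$th root of unity (this is a classical fact, provable via the Milgram formula or, in this fusion-categorical language, because the pointed premodular category $\C(A,q)$ has a well-defined multiplicative central charge). Since $n = |A|$ is assumed odd here, the relevant root of unity will in fact be a $4$th root of unity, but we will not even need that refinement.

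First I would reduce the problem to a statement purely about quadratic forms: since $q$ is a quadratic form with associated bicharacter $\chi$, the square $q^2$ is also a quadratic form, and its associated bicharacter is $\chi^2$, which is again non-degenerate because $|A|$ is odd (squaring is an automorphism of $A$, hence of $\widehat A$). So both $q$ and $q^2$ are non-degenerate quadratic forms on $A$, and each has a normalized Gauss sum that is a root of unity of finite order dividing $8$. Write $\sum_{a\in A} q(a) = \sqrt{n}\,\zeta$ and $\sum_{a\in A} q(a)^2 = \sqrt{n}\,\zeta'$ with $\zeta,\zeta'$ roots of unity; then the ratio in the statement is $\zeta'/\zeta$, so it suffices to show $\zeta' = \pm\zeta$.

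Next I would pin down the Gauss sums explicitly. The key computational step is the change of variable $a \mapsto 2a$, which is a bijection of $A$ since $n$ is odd. Because $q(2a) = q(a)^4$ and, more usefully, because $2$ is invertible mod the exponent of $A$, there is a quadratic form $q_0$ with $q_0^2 = q$ (namely $q_0(a) := q(ca)$ where $2c \equiv 1$ modulo the exponent), and iterating, every power $q^k$ with $k$ odd has the same normalized Gauss sum as $q$ up to a controlled sign coming from the Jacobi-symbol-type factor $\left(\frac{k}{n}\right)$. Concretely, the substitution shows $\sum_a q(a)^2 = \sum_a q(2a) = \sum_b q_0(b)$ after suitable re-indexing, which lets one relate the Gauss sum of $q^2$ to that of $q$. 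Carrying this through (using $q(-a) = q(a)$ and non-degeneracy to evaluate the resulting character sums) yields $\zeta'/\zeta = \left(\frac{2}{n}\right) \in \{\pm 1\}$ by quadratic reciprocity, which is exactly the claim.

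The main obstacle I anticipate is bookkeeping the sign: one must be careful that the substitution $a\mapsto 2a$ introduces precisely a quadratic-residue-symbol factor and nothing worse, and that the normalized Gauss sums really are roots of unity (so that their ratio, a priori a complex number of modulus $1$, being a root of unity of order dividing $8$ and also real forces it into $\{\pm 1\}$). An alternative, perhaps cleaner, route that sidesteps the reciprocity computation: observe that $\zeta$ and $\zeta'$ are both $4$th roots of unity (since $n$ is odd, the $8$th-root ambiguity collapses — this follows because the relevant pointed category $\C(A,q)$ has odd order, so its Gauss sum squared equals $\pm 1$), hence $\zeta^2, (\zeta')^2 \in \{\pm 1\}$; then one shows $\zeta^2 = (\zeta')^2$ by a direct manipulation of $\left(\sum_a q(a)\right)^2 = \sum_{a,b} q(a)q(b) = \sum_{a,b} q(a+b)\chi(a,b)^{-1}$ and the analogous identity for $q^2$, comparing the two via the substitution above. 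Either way the ratio is a real $4$th root of unity, hence $\pm 1$.
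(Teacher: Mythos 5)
Your first route, as written, rests on false identities. For a quadratic form one has $q(2a)=q(a)^{4}$, not $q(a)^{2}$, so the chain ``$\sum_a q(a)^2=\sum_a q(2a)=\sum_b q_0(b)$'' does not hold; worse, since $a\mapsto 2a$ is a bijection of the odd-order group $A$, re-indexing gives $\sum_a q(2a)=\sum_a q(a)$, so if your chain were valid it would prove the two Gauss sums are \emph{equal}, which is false (for $A=\bZ/3\bZ$, $q(x)=e^{2\pi i x^2/3}$ one gets $\sum_a q(a)=i\sqrt3$ and $\sum_a q(a)^2=-i\sqrt3$, ratio $-1$). Likewise $q_0(a):=q(ca)$ with $2c\equiv 1$ satisfies $q_0^{4}=q$, not $q_0^{2}=q$. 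The sharper value $\left(\frac{2}{n}\right)$ you aim for is in fact correct, but it is obtained by diagonalizing $q$ over the cyclic factors of $A$ and using the multiplicativity $\sum_x \xi^{k a x^2}=\left(\frac{k}{n}\right)\sum_x \xi^{a x^2}$ on each factor (a Legendre/Jacobi-symbol identity, not quadratic reciprocity); your substitution as stated does not produce that symbol.

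Your alternative route is the right idea, but its key step, the identity $\bigl(\sum_a q(a)\bigr)^2=\bigl(\sum_a q(a)^2\bigr)^2$, is exactly what is never proved: you defer it to ``the substitution above,'' i.e.\ to the flawed computation. It can be completed as follows, and this is precisely the paper's proof in computational disguise: since $|A|$ is odd, $(a,b)\mapsto(a+b,\,a-b)$ is a bijection of $A\times A$, and $q(x+y)q(x-y)=q(x)^2q(y)^2$ (using $q(-y)=q(y)$ and $\chi(x,-y)=\chi(x,y)^{-1}$), so with $c=2^{-1}$ one gets $\bigl(\sum_a q(a)\bigr)^2=\sum_{u,v}q(cu)^2q(cv)^2=\bigl(\sum_a q(a)^2\bigr)^2$. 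The paper phrases this structurally: it decomposes $(A\times A,\,q\times q)$ orthogonally into the diagonal and the antidiagonal, each carrying a non-degenerate restriction (here oddness of $|A|$ is used) with Gauss sum $\sum_a q(a)^2$, and invokes multiplicativity of Gauss sums. Note that once this identity is available, none of the Milgram/root-of-unity facts you invoke are needed: both Gauss sums are nonzero (of absolute value $\sqrt{n}$ by non-degeneracy), so equality of their squares immediately forces the ratio to lie in $\{\pm1\}$.
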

\begin{proof}
Consider the group $A\times A$ with a non-degenerate
quadratic form $Q= q \times q$. The Gaussian sum for this
form is
\[
\tau(A\times A, Q) = \sum_{a, b\in A}\,q(a)q(b) = \tau(A, q)^2.
\]
The restriction
of $Q$ on the diagonal subgroup $D := \{ (a,a)\mid a\in A\}$
is non-degenerate since $|A|$ is odd. The restriction of $Q$
on the orthogonal complement $D^\perp = \{ (a,-a)\mid a\in A\}$
is non-degenerate as well. By the multiplicativity of Gaussian
sums we have
\[
\tau(A\times A, Q) = \tau(D, Q) \tau(D^\perp, Q) =
(\sum_{a\in A}\,q(a)^2)^2,
\]
which implies the result.
\end{proof}

Using the formulas for the $S$- and $T$- matrices of $\Z(\C)$ given
in Subsection~\ref{subsection:equivariantization of relative center} we
can write down the $S$- and $T$- matrices of $\E(q,\pm)$:
\begin{eqnarray*}[lcl:lcl:lcl]
S_{X_\pm, X_\pm} &=& 1,  & S_{X_\mp, X_\pm} &=& 1,
& S_{X_\pm, Y_a} &=& 2,\\
S_{X_+, Z_l} &=& \sqrt{n},
& S_{X_-, Z_l} &=& -\sqrt{n},
& S_{Y_a, Y_b} &=& 2\left(\frac{q(a+b)^2}{q(a)^2q(b)^2}
+ \frac{q(a)^2q(b)^2}{q(a+b)^2}\right),
\end{eqnarray*}
\begin{eqnarray*}[rcl:rcl]
S_{Y_a, Z_l} &=& 0,
& S_{Z_l, Z_l} &=&
\begin{cases}
\pm \sqrt{n}, \text{ if the Gauss sums of $q$ and $q^2$ coincide,}\\
\mp \sqrt{n}, \text{ otherwise,}
\end{cases}\\
& & & S_{Z_l, Z_{l+1}} &=&
\begin{cases}
\mp \sqrt{n}, \text{ if the Gauss sums of $q$ and $q^2$ coincide,}\\
\pm \sqrt{n}, \text{ otherwise.}
\end{cases}
\end{eqnarray*}
\begin{eqnarray*}[lclqlclqlcl]
T_{X_\pm} &=& 1, & T_{Y_a} &=& q(a)^2, & T_{Z_l} &=& \Delta_l.
\end{eqnarray*}
(Recall that $\Delta_l, l \in \bZ/2\bZ$, are distinct
square roots of $\pm \frac{1}{\sqrt{n}} \sum_{a\in A}\,q(a)$.)

\subsection{Example with $A = \bZ/p\bZ
\times \bZ/p\bZ$}

Let $p$ be an odd prime and let $A:=\bZ/p\bZ
\times \bZ/p\bZ$.
Let ${\legendre{\cdot}{p}}$ denote the Legendre
symbol modulo $p$, i.e., ${\legendre{a}{p}} = 1$
if $a \in (\bZ/p\bZ)^\times$ is a square modulo $p$ and $-1$
otherwise.

Let $a,b \in (\bZ/p\bZ)^\times$ and $\xi := e^{\frac{2\pi i}{p}}$. Consider the following
nondegenerate quadratic form $q$ on $A$:
$$
q(x_1,x_2)= \xi^{ax_1^2-bx_2^2}.
$$
It is hyperbolic if ${\legendre{ab}{p}} = 1$ and elliptic
if  ${\legendre{ab}{p}} = -1$.

We will need the following.

\begin{lemma}
\label{classical gauss sums}
For every $a,b \in A^\times$, we have
$$
\sum_{x \in \bZ/p\bZ} \xi^{ax^2} =
\begin{cases}
{\legendre{a}{p}} \sqrt{p}, \text{ if } p \equiv 1 \, (\mod  4),\\
{\legendre{a}{p}} i \sqrt{p}, \text{ if } p \equiv 3 \, (\mod 4)
\end{cases}
$$
and
$$
\sum_{(x_1,x_2) \in \bZ/p\bZ \times \bZ/p\bZ}
\xi^{ax_1^2-bx_2^2} = {\legendre{ab}{p}} p.
$$
\end{lemma}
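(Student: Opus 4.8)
The statement is entirely classical — these are Gauss sums — so the plan is to reduce everything to the standard quadratic Gauss sum and known evaluations. First I would recall the basic quadratic Gauss sum $g_p := \sum_{x \in \bZ/p\bZ} \xi^{x^2}$, which is known to equal $\sqrt{p}$ if $p \equiv 1 \pmod 4$ and $i\sqrt{p}$ if $p \equiv 3 \pmod 4$; this is the content of the classical theorem of Gauss on the sign of the quadratic Gauss sum, and I would simply cite it. The first displayed identity then follows from the substitution argument: for $a \in (\bZ/p\bZ)^\times$, the sum $\sum_x \xi^{ax^2}$ can be rewritten by the well-known identity $\sum_x \xi^{ax^2} = \legendre{a}{p} g_p$, which one proves by grouping the nonzero $x$ according to the value of $x^2$ and using that $x \mapsto ax^2$ and $x \mapsto x^2$ have the same fibers up to the quadratic residue character (equivalently, $\sum_x \xi^{ax^2} = \sum_{t} (1 + \legendre{t}{p}) \xi^{at} = \legendre{\bar a}{p}\sum_t \legendre{t}{p}\xi^t$, then identify $\sum_t \legendre{t}{p}\xi^t$ with $g_p$). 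Combining with the explicit value of $g_p$ gives the first formula.

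For the second identity, I would factor the double sum as a product of two one-variable sums:
\[
\sum_{(x_1,x_2)} \xi^{ax_1^2 - bx_2^2} = \left(\sum_{x_1} \xi^{ax_1^2}\right)\left(\sum_{x_2} \xi^{-bx_2^2}\right) = \legendre{a}{p}\,g_p \cdot \legendre{-b}{p}\,g_p = \legendre{-ab}{p}\, g_p^2.
\]
Now $g_p^2 = \legendre{-1}{p}\, p$ by Gauss's theorem (equivalently, $g_p^2 = p$ when $p \equiv 1$ and $g_p^2 = -p$ when $p \equiv 3$, i.e. $g_p^2 = (-1)^{(p-1)/2} p = \legendre{-1}{p} p$). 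Hence the double sum equals $\legendre{-ab}{p}\legendre{-1}{p}\, p = \legendre{ab}{p}\, p$, since $\legendre{-1}{p}^2 = 1$ and the Legendre symbol is multiplicative. This is exactly the claimed value.

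The only genuine input is the evaluation of the sign of the quadratic Gauss sum, which is a deep classical result but entirely standard; everything else is elementary manipulation with multiplicative characters and roots of unity. I do not anticipate any real obstacle — the main thing to be careful about is bookkeeping of the factors $\legendre{-1}{p}$ that appear from $g_p^2$ and from replacing $\xi^{-bx_2^2}$ by $\legendre{-b}{p} g_p$, and checking they cancel as claimed. I would phrase the write-up to cite a standard reference (e.g. a textbook on Gauss sums) for the value of $g_p$ and then present the two-line reduction above.
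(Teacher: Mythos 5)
Your proposal is correct and follows essentially the same route as the paper, which simply cites the classical evaluation of the quadratic Gauss sum (Rosen's book) for the first identity and notes that the second is an easy consequence of the first; you merely spell out that "easy consequence" (factoring the double sum and tracking the $\legendre{-1}{p}$ factors, which indeed cancel as you claim). No gaps.
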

\begin{proof}
The first assertion is well known, see for example \cite{R}.
The second assertion is an easy consequence of the first.
\end{proof}

Using Lemma~\ref{classical gauss sums} we can explicitly write the $S$-matrix of
$\E(q,\pm)$:
\begin{eqnarray*}[rcl:rcl:rcl]
S_{X_\pm, X_\pm} &=& 1, & S_{X_\mp, X_\pm} &=& 1,
& S_{X_\pm, Y_{(x_1,x_2)}} &=& 2,\\
S_{X_+, Z_l} &=& p, & S_{X_-, Z_l} &=& -p,
& S_{Y_{(x_1,x_2)}, Y_{(y_1,y_2)}} &=& 4 \Real(\xi^{4ax_1y_1-4bx_2y_2}),\\
S_{Y_{(x_1,x_2)}, Z_l} &=& 0,
& S_{Z_l, Z_l} &=& \pm p,
& S_{Z_l, Z_{l+1}} &=& \mp p,
\end{eqnarray*}
and its $T$-matrix:
\[
T_{X_\pm} = 1,\quad T_{Y_(x_1,x_2)} = \xi^{2ax_1^2-2bx_2^2},\quad
T_{Z_l} = \Delta_l,
\]
%
%
where $\Delta_l, l \in \bZ/2\bZ$, are distinct
square roots of $\pm {\legendre{ab}{p}}$.

The central charge of the modular category $\E(q,\pm)$ is
\[
\zeta(\E(q,\pm)) = {\legendre{ab}{p}}.
\]

Below we give the $S$- and $T$-matrices of the modular category $\E(q, \pm)$
for $p=3$. Order simple objects of $\E(q,\pm)$ as follows:
$\mathbf{1}, X_-, Y_{(0,1)}, Y_{(1,0)}, Y_{(1,1)}, Y_{(1,2)}, Z_+, Z_-$.
There are four modular categories $\E(q,\pm)$ of dimension $36$
corresponding to the choices of hyperbolic/elliptic $q$ and $\tau =\pm \frac{1}{3}$.
\begin{enumerate}
\item[(a)]
When $q$ is hyperbolic we have:
\begin{eqnarray*}
S &=& \begin{pmatrix}
1 & 1 & 2  & 2  & 2 & 2 &  3 & 3 \\
1 & 1 & 2  & 2  & 2 & 2 & -3 & 3 \\
2 & 2 & -2 & 4 & -2 & -2 & 0 & 0 \\
2 & 2 & 4  & -2  & -2 & -2 & 0 & 0 \\
2 & 2 & -2 &-2 & 4 & -2 & 0 & 0 \\
2 & 2 & -2 &-2 & -2 & 4 &0 & 0 \\
3 & -3 & 0& 0& 0 & 0 & \pm 3 & \mp 3 \\
3 & -3 & 0& 0& 0 & 0 & \mp 3 & \pm 3,
\end{pmatrix} \\
T &=& \mbox{diag}\{ 1, 1, \xi^2, \xi, 1 ,1 , 1, -1 \}
\quad \mbox{ when } \tau =\frac{1}{3}, \\
T &=&  \mbox{diag}\{ 1, 1, \xi^2, \xi, 1 ,1 , i, -i \}
\quad \mbox{ when } \tau =-\frac{1}{3}.
\end{eqnarray*}
Note that both the corresponding modular categories are group-theoretical with central charge $1$;
in fact the one with $\tau = \frac{1}{3}$ is equivalent
to the representation category of the double $D(S_3)$
of the symmetric group $S_3$ and the one with
$\tau = -\frac{1}{3}$ is equivalent to the twisted double
of $S_3$.\\
\item[(b)]
When $q$ is elliptic we have:
\begin{eqnarray*}
S &=& \begin{pmatrix}
1 & 1 & 2  & 2  & 2 & 2 &  3 & 3 \\
1 & 1 & 2  & 2  & 2 & 2 & -3 & 3 \\
2 & 2 & -2 & 4 & -2 & -2 & 0 & 0 \\
2 & 2 & 4  & -2  & -2 & -2 & 0 & 0 \\
2 & 2 & -2 &-2 & -2 & 4 & 0 & 0 \\
2 & 2 & -2 &-2 & 4 & -2 &0 & 0 \\
3 & -3 & 0& 0& 0 & 0 & \pm 3 & \mp 3 \\
3 & -3 & 0& 0& 0 & 0 & \mp 3 & \pm 3,
\end{pmatrix} \\
T &=& \mbox{diag}\{ 1, 1, \xi, \xi, \xi^2 ,\xi^2 , i, -i \}
\quad \mbox{ when } \tau =\frac{1}{3}, \\
T &=&  \mbox{diag}\{ 1, 1, \xi, \xi, \xi^2 ,\xi^2 , 1, -1 \}
\quad \mbox{ when } \tau =-\frac{1}{3}.
\end{eqnarray*}
Both the corresponding modular categories are not group-theoretical. They both have
central charge $-1$ and so are not equivalent to
centers of fusion categories. In particular,
they are not equivalent to representation
categories of any twisted group doubles.
\end{enumerate}
\subsection{Example with $A=\bZ/p\bZ$}
Let $p$ be an odd prime and let $A:=\bZ/p\bZ$.
Let $a \in (\bZ/p\bZ)^\times$ and $\xi := e^{\frac{2\pi i}{p}}$. Upto isomorphism
the are two nondegenerate quadratic forms $q$ on $A$:
$$
q(x)= \xi^{ax^2},
$$
one corresponding to ${\legendre{a}{p}} = 1$ and
another to ${\legendre{a}{p}} = -1$.

Using Lemma~\ref{classical gauss sums} we can explicitly write the $S$-matrix of
$\E(q,\pm)$:
\begin{eqnarray*}[lcl:lcl:lcl]
S_{X_\pm, X_\pm} &=& 1, & S_{X_\mp, X_\pm} &=& 1,
& S_{X_\pm, Y_x} &=& 2,\\
S_{X_+, Z_l} &=& \sqrt{p}, & S_{X_-, Z_l} &=& -\sqrt{p},
& S_{Y_x, Y_y} &=& 4 \Real(\xi^{4axy}),\\
S_{Y_a, Z_l} &=& 0,
& S_{Z_l, Z_l} &=& \pm {\legendre{2}{p}} \sqrt{p},
& S_{Z_l, Z_{l+1}} &=& \mp {\legendre{2}{p}} \sqrt{p}.
\end{eqnarray*}
\begin{eqnarray*}[lclqlclqlcl]
T_{X_\pm} &=& 1, & T_{Y_x} &=& \xi^{-2ax^2},
& T_{Z_l} &=& \Delta_l,
\end{eqnarray*}
where
$$
\Delta_l, l \in \bZ/2\bZ, \text{ are distinct}
\begin{cases}
\text{square roots of } \pm {\legendre{a}{p}}, \text{ if } p \equiv 1 \, (\mod  4),\\
\text{square roots of } \pm {\legendre{a}{p}} i, \text{ if }  p \equiv 3 \, (\mod  4).
\end{cases}
$$

The central charge of the modular category $\E(q,\pm)$ is
$$
\zeta(\E(q,\pm)) =
\begin{cases}
{\legendre{2a}{p}}, \text{ if } p \equiv 1 \, (\mod  4),\\
-{\legendre{2a}{p}} i, \text{ if } p \equiv 3 \, (\mod 4).
\end{cases}
$$

Below we give the $S$- and $T$-matrices of the modular category $\E(q, \pm)$
for $p=3 \text{ and } 5$. For $p=3$ we order the simple objects as
$\be, X_-, Y_1, Z_0, Z_1$ and for $p=5$ we order them as
$\be, X_-, Y_1, Y_2, Z_0, Z_1$. (In (c) and (d) below, $\xi = e^{\frac{2\pi i}{5}}$.)

\begin{enumerate}
\item[(a)] When $p=3$ and $a=1$ we have:
\begin{eqnarray*}
S &=&
\begin{pmatrix}
1 & 1 & 2  & \sqrt{3}  & \sqrt{3} \\
1 & 1 & 2  & -\sqrt{3}  & -\sqrt{3} \\
2 & 2 & -2 & 0 & 0 \\
\sqrt{3} & -\sqrt{3} & 0  & \mp \sqrt{3}  & \pm \sqrt{3} \\
\sqrt{3} & -\sqrt{3} & 0 & \pm \sqrt{3} & \mp \sqrt{3}
\end{pmatrix} \\
T &=& \mbox{diag}\left\{ 1, 1, \frac{-1+i\sqrt{3}}{2}, \frac{1+i}{\sqrt{2}}, \frac{-1-i}{\sqrt{2}} \right\}
\quad \mbox{ when } \tau =\frac{1}{\sqrt{3}}, \\
T &=&  \mbox{diag}\left\{ 1, 1, \frac{-1+i\sqrt{3}}{2}, \frac{1-i}{\sqrt{2}}, \frac{-1+i}{\sqrt{2}} \right\}
\quad \mbox{ when } \tau =-\frac{1}{\sqrt{3}}.
\end{eqnarray*}
The central charge of both the corresponding modular categories is $i$. \\
\item[(b)] When $p=3$ and $a=2$ we have:
\begin{eqnarray*}
S &=& \text{ the $S$-matrix in (a)} \\
T &=& \mbox{diag}\left\{ 1, 1, \frac{-1-i\sqrt{3}}{2}, \frac{1-i}{\sqrt{2}}, \frac{-1+i}{\sqrt{2}}  \right\}
\quad \mbox{ when } \tau =\frac{1}{\sqrt{3}}, \\
T &=& \mbox{diag}\left\{ 1, 1, \frac{-1-i\sqrt{3}}{2}, \frac{1+i}{\sqrt{2}}, \frac{-1-i}{\sqrt{2}} \right\}
\quad \mbox{ when } \tau =\frac{1}{\sqrt{3}}.
\end{eqnarray*}
The central charge of both the corresponding modular categories is $-i$.\\
\item[(c)] When $p=5$ and $a=1$ we have:
\begin{eqnarray*}
S &=&
\begin{pmatrix}
1 & 1 & 2  & 2 & \sqrt{5} & \sqrt{5} \\
1 & 1 & 2  & 2 & -\sqrt{5}  & -\sqrt{5} \\
2 & 2 & \sqrt{5}-1 & -\sqrt{5}-1 & 0 & 0 \\
2 & 2 & -\sqrt{5}-1 & \sqrt{5}-1 & 0 & 0 \\
\sqrt{5} & -\sqrt{5} & 0  & 0 & \mp \sqrt{5} & \pm \sqrt{5}\\
\sqrt{5} & -\sqrt{5} & 0  & 0 & \pm \sqrt{5} & \mp \sqrt{5}
\end{pmatrix} \\
T &=& \mbox{diag}\left\{ 1, 1, \xi^3, \xi^2, 1, -1 \right\}
\quad \mbox{ when } \tau =\frac{1}{\sqrt{5}}, \\
T &=&  \mbox{diag}\left\{ 1, 1, \xi^3, \xi^2, i, -i \right\}
\quad \mbox{ when } \tau =-\frac{1}{\sqrt{5}}.
\end{eqnarray*}
The central charge of both the corresponding modular categories is $-1$.\\
\item[(d)] When $p=5$ and $a=2$ we have:
\begin{eqnarray*}
S &=&
\begin{pmatrix}
1 & 1 & 2  & 2 & \sqrt{5} & \sqrt{5} \\
1 & 1 & 2  & 2 & -\sqrt{5}  & -\sqrt{5} \\
2 & 2 & -\sqrt{5}-1 & \sqrt{5}-1 & 0 & 0 \\
2 & 2 & \sqrt{5}-1 & -\sqrt{5}-1 & 0 & 0 \\
\sqrt{5} & -\sqrt{5} & 0  & 0 & \mp \sqrt{5} & \pm \sqrt{5}\\
\sqrt{5} & -\sqrt{5} & 0  & 0 & \pm \sqrt{5} & \mp \sqrt{5}
\end{pmatrix} \\
T &=& \mbox{diag}\left\{ 1, 1, \xi, \xi^4, i, -i \right\}
\quad \mbox{ when } \tau =\frac{1}{\sqrt{5}}, \\
T &=&  \mbox{diag}\left\{ 1, 1, \xi, \xi^4, 1, -1 \right\}
\quad \mbox{ when } \tau =-\frac{1}{\sqrt{5}}.
\end{eqnarray*}
The central charge of both the corresponding modular categories is $1$.

\end{enumerate}

\section{Appendix: Zeroes in $S$-matrices}
\label{sect 6}
There is a  classical result  of Burnside
 in character theory saying that if $\chi$ is an irreducible character of
a finite group $G$ and $\chi(1) > 1$ then  $\chi(g)=0$ for some $g\in G$,
see \cite[Chapter 21]{BZ}.

In this appendix we establish a categorical analogue of this result
for weakly integral modular categories. Recall \cite{ENO2} that a fusion category $\C$ is
called {\em weakly integral} if its Frobenius-Perron dimension is an integer.
In this case  the Frobenius-Perron dimension of every simple object of $\C$
is the square root of an integer \cite{ENO1}.

Let $\C$ be a weakly integral modular category with the $S$-matrix $S$.
Let $\O(\C)$ denote the set of all (representatives of isomorphism
classes of) simple object of $\C$. Given $X\in \O(\C)$
define the following sets:
\begin{eqnarray*}
T_X &=& \{ Y \in \O(\C) \mid  S_{X,Y}=0 \},\\
D_X &=& \O(\C) - (T_X \cup \{ \be \}).
\end{eqnarray*}
Clearly, we have a partition $\O(\C) = T_X \cup D_X  \cup \{\be \}$.
Let $\T_X$ and $\D_X$ be full Abelian subcategories of $\C$ generated
by $T_X$ and $D_X$, respectively.

Let $K$ be the field extension of $\mathbb{Q}$ generated by the entries of $S$.
It is known  \cite{dBG, CG} that there is a root of unity $\xi$ such that
$K \subset \mathbb{Q}(\xi)$.  In particular,  the operation of taking the square of an absolute
value of an element of $S$ is well defined.
Let $G := \mbox{Gal}(K/\mathbb{Q})$. Every element $\sigma\in G$ comes from a permutation
$\sigma$ of $\O(\C)$ such that $\sigma(S_{X,Y}) = S_{X,\sigma(Y)}$ for all $X,Y\in \O(\C)$.

Let $\C$ be a weakly integral modular category.
It was shown in \cite{ENO1} that there is a canonical spherical structure on $\C$
such that categorical dimensions in $\C$ coincide with Frobenius-Perron
dimensions. Let us fix this structure for the reminder of this section.
For any $X\in \O(\C)$ let $d_X$ denote the dimension of $X$.
For any full abelian subcategory $\mathcal{A}$ of $\C$ let $\dim(\mathcal{A})$
denote the sum of squares of dimensions of simple objects of $\mathcal{A}$.

\begin{theorem}
\label{Burnside again}
Let $\C$ be a weakly integral modular category with the $S$-matrix $S$.
Then $T_X$ is not empty for every non-invertible simple object $X$ of $\C$.
That is, every row (column) of $S$ corresponding to a non-invertible
simple object contains  at least one zero entry.
\end{theorem}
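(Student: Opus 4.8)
The plan is to prove the contrapositive: a simple object $X$ with $S_{X,Y}\neq 0$ for every $Y\in\O(\C)$ (equivalently $T_X=\emptyset$) must be invertible. For $Y\in\O(\C)$ set $x_Y:=S_{X,Y}/d_Y=S_{X,Y}/S_{\be,Y}$. Three standard facts will be used. First, by the Verlinde formula $Z\mapsto S_{Z,Y}/S_{\be,Y}$ is a character of the Grothendieck ring of $\C$, so each $x_Y$, being the value of such a character on the integral element $[X]$, is an algebraic integer, and $x_{\be}=d_X$. Second, since $\C$ is weakly integral, $d_Y^{\,2}$ is a positive integer for every $Y$. Third, from the unitarity relation $\sum_{Y\in\O(\C)}|S_{X,Y}|^2=\dim(\C)$ and $|S_{X,\be}|^2=d_X^2$ we obtain
\[
\sum_{Y\neq\be}|x_Y|^2\,d_Y^{\,2}=\sum_{Y\in D_X}|S_{X,Y}|^2=\dim(\C)-d_X^2 ,
\]
the middle equality because $S_{X,Y}=0$ for $Y\in T_X$.

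Consider the number
\[
\gamma:=\prod_{Y\in\O(\C)}x_Y^{\,d_Y^{\,2}}=d_X\cdot\prod_{Y\neq\be}x_Y^{\,d_Y^{\,2}} .
\]
It is an algebraic integer, being a product of powers with nonnegative integer exponents of the algebraic integers $x_Y$. I would first check that in fact $\gamma\in\mathbb{Z}$, using the Galois description recalled before the theorem: for $\sigma\in G$ one has $\sigma(x_Y)=\sigma(S_{X,Y})/\sigma(S_{\be,Y})=S_{X,\sigma(Y)}/S_{\be,\sigma(Y)}=x_{\sigma(Y)}$, and applying $\sigma$ to $S_{\be,Y}=d_Y$ gives $\sigma(d_Y)=d_{\sigma(Y)}$; since $d_Y^{\,2}\in\mathbb{Q}$ this forces $d_{\sigma(Y)}^{\,2}=d_Y^{\,2}$. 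Hence $\sigma(\gamma)=\prod_Y x_{\sigma(Y)}^{\,d_Y^{\,2}}=\prod_Y x_Y^{\,d_{\sigma^{-1}(Y)}^{\,2}}=\gamma$, so $\gamma$ is fixed by $\mathrm{Gal}(K/\mathbb{Q})$ and therefore $\gamma\in\mathbb{Q}\cap\overline{\mathbb{Z}}=\mathbb{Z}$.

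Next I would bound $|\gamma|$. Applying the weighted arithmetic--geometric mean inequality to the numbers $|x_Y|^2$ ($Y\neq\be$) with weights $d_Y^{\,2}$ and total weight $W:=\dim(\C)-1$, together with the identity above, gives
\[
\prod_{Y\neq\be}|x_Y|^{\,2d_Y^{\,2}}\ \le\ \Bigl(\tfrac{1}{W}\sum_{Y\neq\be}|x_Y|^2 d_Y^{\,2}\Bigr)^{W}
=\Bigl(1-\tfrac{d_X^2-1}{W}\Bigr)^{W}\ \le\ e^{-(d_X^2-1)} ,
\]
where the last step uses $1-t\le e^{-t}$ (valid since $d_X^2-1<\dim(\C)-1$). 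Therefore $|\gamma|=d_X\prod_{Y\neq\be}|x_Y|^{d_Y^{\,2}}\le d_X\,e^{-(d_X^2-1)/2}$, and this is strictly less than $1$ as soon as $d_X>1$, because then $\ln(d_X^2)<d_X^2-1$. A rational integer of absolute value $<1$ is $0$, so $\gamma=0$; but a product vanishes only if a factor does, so $S_{X,Y}=0$ for some $Y$, contradicting $T_X=\emptyset$. Hence $d_X=1$, which is what we wanted.

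The one genuinely delicate point is the claim $\gamma\in\mathbb{Z}$, and it is exactly here that weak integrality is essential: it makes the exponents $d_Y^{\,2}$ integers (so that $\gamma$ is an algebraic integer at all) and rational (so that they are Galois-invariant, which is what lets the permutation $\sigma$ of $\O(\C)$ carry $\gamma$ to itself). If one prefers not to rely on the absence of signs in the Galois action on $S$, one replaces $\gamma$ by $\gamma^2$, which is manifestly $\mathrm{Gal}(K/\mathbb{Q})$-fixed; the remaining estimate is unaffected.
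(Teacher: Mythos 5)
Your argument is correct, and it rests on the same three pillars as the paper's proof---the orthogonality relation $\sum_{Y}|S_{X,Y}|^2=\dim(\C)$, the algebraic integrality of the character values $S_{X,Y}/d_Y$, and the Galois-invariance of a product weighted by the integer exponents $d_Y^2$ (the point where weak integrality enters, in both proofs)---but the packaging is genuinely different. The paper argues directly: it applies the weighted AM--GM inequality over the set $D_X$ of nonzero off-unit entries and bounds the resulting geometric mean below by $1$, since the Galois-stable product of positive algebraic integers is rational; this yields the sharper quantitative conclusion $\dim(\T_X)\geq d_X^2-1$, and the same integrality-plus-Galois mechanism is reused in Proposition~\ref{Thompson}. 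You argue by contrapositive and run AM--GM in the opposite direction, bounding the full weighted product above by $\bigl(1-\tfrac{d_X^2-1}{W}\bigr)^{W}\leq e^{-(d_X^2-1)}$ and contradicting $|\gamma|\geq 1$; this gives only nonemptiness of $T_X$, not the bound on $\dim(\T_X)$, but it has a technical advantage: because your product runs over all of $\O(\C)$ (with the factor $x_{\be}=d_X$ included), Galois-stability of the index set is automatic, whereas the paper's claim that $D_X$ is ``clearly'' Galois-stable is more delicate---the Galois permutation need not fix $\be$ (in the Ising category it swaps $\be$ with the fermion), so strictly one should add the easy remark that the factors attached to the Galois orbit of $\be$ are themselves rational. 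Finally, your closing caveat about passing to $\gamma^2$ is unnecessary: the Galois action on the ratios $S_{X,Y}/S_{\be,Y}$ is a genuine permutation without signs (signs arise only for the unnormalized entries), so $\gamma$ itself is already fixed.
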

\begin{proof}
Note that the statement of Proposition does not depend on the choice of
spherical structure.

We have $\sum_{Y\in \O(\C)}\, |S_{X,Y}|^2 =\dim(\C)$, hence,
\begin{equation}
\label{one}
1 =\frac{\dim(\C)}{d_X^2} - \sum_{Y \in D_X}\, \left| \frac{S_{X,Y}}{d_X}\right|^2
= \frac{1+ \dim(\T_X)}{d_X^2} - \left(  \sum_{Y \in D_X}\, \left| \frac{S_{X,Y}}{d_X}\right|^2
-  \frac{\dim(\D_X)}{d_X^2}\right),
\end{equation}
where $d_X$ denotes the dimension of $X$. It suffices to check that
\begin{equation}
\label{two}
\frac{1}{\dim(\D_X)}  \sum_{Y \in D_X}\, \left| \frac{S_{X,Y}}{d_X}\right|^2 \geq \frac{1}{d_X^2}
\end{equation}
since then  \eqref{one} implies that $ 1\leq \tfrac{1+ \dim(\T_X)}{d_X^2}$,
whence
\begin{equation}
\label{three}
\dim(\T_X) \geq d_X^2-1.
\end{equation}
But $X$ is non-invertible so $d_X > 1$ and $\T_X \neq 0$.

Rewriting the left hand side of \eqref{two} as the sum  of $\dim(\D_X)$ terms and
using the inequality of arithmetic and geometric means we obtain
\begin{eqnarray*}
\frac{1}{\dim(\D_X)}  \sum_{Y \in D_X}\, \left| \frac{S_{X,Y}}{d_X}\right|^2
&=& \frac{1}{\dim(\D_X)}  \sum_{Y \in D_X}\, d_Y^2 \left| \frac{S_{X,Y}}{d_Xd_Y}\right|^2 \\
&\geq& \frac{1}{d_X^2} \left(   \prod_{Y \in D_X}\, \left| \frac{S_{X,Y}}{d_Y}\right|^{2d_Y^2}
  \right)^{\frac{1}{\dim(D_X)}}.
\end{eqnarray*}

The set $D_X$ is clearly stable under  all automorphisms in the Galois group, and hence so is
the product $\prod_{Y \in D_X}\, \left| \frac{S_{X,Y}}{d_Y}\right|^{2d_Y^2}$.
Therefore, this product belongs to $\mathbb{Q}$. Its factors
are squares of absolute values of characters of $K_0(\C)$ on $X$ and hence are
algebraic integers.  Since all factors are positive, the product is  $\geq 1$,
which implies \eqref{two}.
\end{proof}


For $X\in \O(\C)$ define
\[
U_X =\{ Y \in \O(\C) \mid  |S_{X,Y}| =d_Y \}.
\]
Let $\mathcal{U}_X$ be the full Abelian subcategory of $\C$ generated by $U_X$.

\begin{proposition}
\label{Thompson}
Let $\C$ be a weakly integral modular category  and let
$X$ be a simple non-invertible object in $\C$. Then
\begin{equation}
3\dim(\T_X) +\dim(\mathcal{U}_X) > \dim(\C).
\end{equation}
\end{proposition}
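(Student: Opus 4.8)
The plan is to refine the argument in the proof of Theorem~\ref{Burnside again}. Put $V_X:=D_X\setminus U_X$ and let $\mathcal{V}_X$ be the full Abelian subcategory of $\C$ it generates, so that $\O(\C)=\{\be\}\sqcup T_X\sqcup U_X\sqcup V_X$ and hence $\dim(\C)=1+\dim(\T_X)+\dim(\mathcal{U}_X)+\dim(\mathcal{V}_X)$; with this notation the assertion of Proposition~\ref{Thompson} is equivalent to $2\dim(\T_X)>\dim(\mathcal{V}_X)+1$. First I would feed this partition into the identity $\sum_{Y\in\O(\C)}|S_{X,Y}|^2=\dim(\C)$ used in the proof of Theorem~\ref{Burnside again}: since $|S_{X,\be}|=d_X$, while $S_{X,Y}=0$ for $Y\in T_X$ and $|S_{X,Y}|=d_Y$ for $Y\in U_X$, one gets the exact identity $\sum_{Y\in V_X}|S_{X,Y}|^2=1+\dim(\T_X)+\dim(\mathcal{V}_X)-d_X^2$. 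Granting the inequality $\sum_{Y\in V_X}|S_{X,Y}|^2\ge\tfrac{3}{2}\dim(\mathcal{V}_X)$, this identity yields $\dim(\T_X)\ge\tfrac{1}{2}\dim(\mathcal{V}_X)+d_X^2-1$; since $\C$ is weakly integral and $X$ is non-invertible, $d_X^2$ is an integer $\ge 2$, so $2\dim(\T_X)\ge\dim(\mathcal{V}_X)+2(d_X^2-1)>\dim(\mathcal{V}_X)+1$, as required. (This also covers the case $V_X=\emptyset$.)

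So the crux is the inequality $\sum_{Y\in V_X}|S_{X,Y}|^2\ge\tfrac{3}{2}\dim(\mathcal{V}_X)$. Here I would pass to the normalized entries $\chi_Y(X):=S_{X,Y}/d_Y$, which are algebraic integers (eigenvalues of the fusion matrix of $X$), and to the numbers $\alpha_Y:=|\chi_Y(X)|^2=|S_{X,Y}|^2/d_Y^2$. Each $\alpha_Y$ equals $\chi_Y(X)\chi_Y(X^*)$ (using $\overline{S_{X,Y}}=S_{X^*,Y}$), hence is an algebraic integer, and its Galois conjugates are exactly the $\alpha_{Y'}$ for $Y'$ running over the Galois orbit of $Y$ in $\O(\C)$. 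For $Y\in V_X$ we have $\alpha_Y\neq 1$, and since $T_X$ and $U_X$ are Galois-stable (a Galois automorphism of the cyclotomic field containing $K$ commutes with complex conjugation), that whole orbit lies in $V_X$, so $\alpha_Y$ is totally positive with no conjugate equal to $1$. Because $\C$ is weakly integral, $d_Y^2$ is a rational integer, hence fixed by the Galois action, so $d_Y$ is constant along each Galois orbit; it is therefore enough to show, for each Galois orbit $\Gamma\subseteq V_X$, that $\sum_{Y\in\Gamma}\alpha_Y\ge\tfrac{3}{2}|\Gamma|$, and then multiply by the orbit-constant value of $d_Y^2$ and sum over the orbits of $V_X$. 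But $\{\alpha_Y:Y\in\Gamma\}$ is the complete set of conjugates of one totally positive algebraic integer $\alpha\neq 1$, each occurring the same number $|\Gamma|/\deg(\alpha)$ of times, so $\sum_{Y\in\Gamma}\alpha_Y=|\Gamma|\cdot\operatorname{tr}(\alpha)/\deg(\alpha)$, and we are reduced to the classical fact that the absolute trace $\operatorname{tr}(\alpha)/\deg(\alpha)$ of a totally positive algebraic integer $\alpha\neq 1$ is at least $3/2$ (with equality only for $\alpha$ a root of $x^2-3x+1$).

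The main obstacle is precisely this last number-theoretic estimate; it is also the point at which the constant $3$ in the statement and the weak-integrality hypothesis genuinely enter (weak integrality is what makes $d_Y^2$ an integer and $d_Y$ constant on Galois orbits). The bound belongs to the Schur--Siegel--Smyth circle of results on traces of totally positive algebraic integers; for a self-contained treatment one checks degree $1$ directly and degree $2$ via the inequality $\operatorname{tr}(\alpha)^2>4\operatorname{Norm}(\alpha)$ forced by total positivity, and invokes the classification of totally positive algebraic integers of absolute trace below $3/2$ (there are none other than $1$). The remaining ingredients---the bookkeeping with the partition of $\O(\C)$, the Galois-stability of $T_X$ and $U_X$, the constancy of $d_Y$ on orbits, and the reduction to a single Galois orbit---are routine and follow the pattern of the proof of Theorem~\ref{Burnside again}.
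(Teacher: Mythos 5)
Your proof is correct and follows essentially the same route as the paper: partition $\O(\C)$ into $\{\be\}$, $T_X$, $U_X$ and the rest, use $\sum_Y|S_{X,Y}|^2=\dim(\C)$, observe that the relevant sets are Galois-stable and that $|S_{X,Y}|^2/d_Y^2$ is a totally positive algebraic integer $\neq 1$ off $T_X\cup U_X$, and invoke Siegel's bound $\geq \tfrac32$ on its normalized trace, finishing with $d_X^2\geq 2$ from weak integrality. The only difference is organizational (you sum orbit by orbit and use constancy of $d_Y$ on Galois orbits, while the paper averages over the full Galois group inside the sum), which does not change the substance.
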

\begin{proof}
We may assume $d_X \geq \sqrt{2}$.

We will use the following theorem of Siegel \cite{Si} from number theory. Let $K/\mathbb{Q}$
be a finite Galois extension with the Galois group $G =\mbox{Gal}(K/\mathbb{Q})$.
Let $\alpha$ be a totally positive algebraic integer in $K$, $\alpha \neq 1$. Then
\[
\frac{1}{|G|} \sum_{\sigma\in G}\, \sigma(\alpha) \geq \frac{3}{2}.
\]
We apply this to the situation when $K$ is the extension of $\mathbb{Q}$
generated by entries of $S$. We compute
\begin{eqnarray*}
\dim(\C)
&=&  \sum_{Y\in \C}\, |S_{X,Y}|^2 \\
&=&  d_X^2 +  \sum_{Y\in U_X}\, d_Y^2 + \sum_{Y\in \O(\C) -(T_X \cup U_X\cup \{\be\})}\,|S_{X,Y}|^2 \\
&=&  d_X^2 + \dim(\mathcal{U}_X) +
     \sum_{Y\in \O(\C) -(T_X \cup U_X\cup \{\be\})}\, d_Y^2
     \left(  \frac{1}{|G|} \sum_{\sigma \in G}\, \sigma\left( \frac{|S_{X,Y}|^2}{d_Y^2} \right) \right) \\
&\geq& 2 + \dim(\mathcal{U}_X) +  \frac{3}{2} (\dim(\C)-\dim(\T_X)-\dim(\mathcal{U}_X)-1),
\end{eqnarray*}
therefore $3 \dim(\T_X) +\dim(\mathcal{U}_X) \geq \dim(\C)+ 1 > \dim(\C)$, as required.
\end{proof}

\begin{remark}
Our proofs of Theorem~\ref{Burnside again} and Proposition~\ref{Thompson} imitate the corresponding
proofs for group characters given in \cite{BZ}.
\end{remark}

\bibliographystyle{ams-alpha}


\end{document}